\documentclass[final]{amsart} 
\usepackage{amsmath}
\usepackage{amsfonts} \usepackage{amsthm} \usepackage{amssymb}
\usepackage{mathrsfs} \DeclareMathAlphabet{\mathpzc}{OT1}{pzc}{m}{it}
\usepackage{array}
\usepackage{longtable}
\usepackage{graphicx}
\usepackage{hyperref}
\hypersetup{colorlinks=true,linkcolor=black,anchorcolor=black,citecolor=black,filecolor=black,menucolor=black,runcolor=black,urlcolor=black,final}
\DeclareGraphicsExtensions{.png,.jpeg,.pdf}
\usepackage{tikz}
\usepackage{thmtools}
\usepackage{mathtools} \usepackage{color} \usepackage{enumitem}
\usetikzlibrary{decorations.markings,arrows,calc,intersections}
\usepackage{ragged2e}
\usepackage{showlabels}
\usepackage{amsthm}
\declaretheorem[numberwithin=section]{theorem}
\usepackage{subcaption}
\usepackage{enumitem}
\usepackage{stackengine,scalerel}
\newcommand\ttimes{\mathbin{\ThisStyle{\ensurestackMath{%
  \stackengine{-1\LMpt}{\SavedStyle\times}
  {\SavedStyle_{\hstretch{.9}{\mkern1mu\sim}}}{O}{c}{F}{T}{S}}}}}
\newcommand\pmttimes{\mathbin{\ThisStyle{\ensurestackMath{%
  \stackengine{-1\LMpt}{\SavedStyle\times}
  {\SavedStyle_{\hstretch{.9}{\mkern1mu(\sim)}}}{O}{c}{F}{T}{S}}}}}
\newcounter{claimcounter}

\def\centerarc[#1](#2)(#3:#4:#5)
    { \draw[#1] ($(#2)+({#5*cos(#3)},{#5*sin(#3)})$) arc (#3:#4:#5); }

 \newcommand{\inj}{\xhookrightarrow{}}
 \newcommand{\ZZ}{\mathbb{Z}}
 
\newcommand{\RR}{\mathbb{R}} \newcommand{\QQ}{\mathbb{Q}}
\newcommand{\HH}{\mathbb{H}}

\newcommand{\sub}{\subseteq}

\newtheorem{lemma}[theorem]{Lemma}
\newtheorem{prop}[theorem]{Proposition}
\newtheorem{corr}[theorem]{Corollary}
\theoremstyle{definition}
\newtheorem{defn}[theorem]{Definition}

\newtheorem{convention}[theorem]{Convention}
  \newcommand{\norm}[1]{\left\lVert#1\right\rVert}
     \newcommand{\del}{\partial}
\setcounter{tocdepth}{2}

\hyphenation{man-i-fold para-lle-lity}

\def\centerarc[#1](#2)(#3:#4:#5)
    { \draw[#1] ($(#2)+({#5*cos(#3)},{#5*sin(#3)})$) arc (#3:#4:#5); }

\title[Minimal triangulation size of Seifert fibered spaces]{Minimal triangulation size of Seifert fibered spaces with boundary}

\author{Adele Jackson}
\address{University of Oxford}
\email{\href{mailto:adele.jackson@maths.ox.ac.uk}{adele.jackson@maths.ox.ac.uk}}
\date{\today}

\def\subjclassname{\textup{2020} Mathematics Subject Classification}
\expandafter\let\csname subjclassname@1991\endcsname=\subjclassname
\expandafter\let\csname subjclassname@2000\endcsname=\subjclassname
\thanks{The author was supported by the Clarendon Fund, the Oxford-Australia Trust, and Lincoln College.}

\def\subjclassname{\textup{2020} Mathematics Subject Classification}
\expandafter\let\csname subjclassname@1991\endcsname=\subjclassname
\expandafter\let\csname subjclassname@2000\endcsname=\subjclassname
\subjclass{
    57Q15, 57K30, 57K31, 57K35
}

\begin{document}

\begin{abstract}
    One measure of the complexity of a 3-manifold is its triangulation complexity: the minimal number of tetrahedra in a triangulation of it. A natural question is whether we can relate this quantity to its topology. We determine the triangulation complexity of Seifert fibered spaces with non-empty boundary in terms of their Seifert data, up to a multiplicative constant. We also show that all singular fibres of such a Seifert fibered space (aside from those of multiplicity two) can be made simplicial in the $79^{th}$ barycentric subdivision of any triangulation of it.
\end{abstract}

\maketitle
\section{Introduction}

The \emph{triangulation complexity} $\Delta(M)$ of a compact 3-manifold $M$ is the minimum number of tetrahedra in any triangulation of $M$. 
This invariant, while easy to define, is challenging to determine.
A few specific natural families are known exactly, for example:
\begin{enumerate}
    \item if $\Sigma$ is a closed orientable surface of genus $g$, $\Delta(\Sigma\times I) = 10g-4$~\cite{BucherFrigerioPagliantini,JJSTBoundsGenusNormalSurface};
    \item if $M$ is a hyperbolic once-punctured torus bundle, its monodromy ideal triangulation (which is the same as its veering triangulation in the sense of Agol) is minimal~\cite{JRSTCuspedHyperbolicFamily};
    \item a minimal triangulation of the lens spaces $L(2n, 1)$ and $L(4n, 2n-1)$ is achieved by their minimal layered triangulations~\cite{JRTInfiniteLensSpaceFamily,JRTCoverings}.
\end{enumerate}
The triangulation complexity is known up to multiplicative bounds for closed hyperbolic mapping tori~\cite{LacPur} and for closed 3-manifolds with elliptic or sol geometries~\cite{LacPurSol}.
It is known precisely for a few other exact families~\cite{JRSTZ2Norm2,JRSTDehnFilling,RSTCuspedHyperbolicFamilyII,VesninFominykh}, but unfortunately the most notable characteristic of most of these families is solely that we know their triangulation complexity; there have been relatively few results about families that naturally arise in other contexts.

We use the definition of \emph{triangulation} that is conventional among low-dimensional topologists: that it is a collection of tetrahedra, with gluing maps between pairs of faces, which when executed produce a manifold homeomorphic to $M$. We take the convention in our proof that this triangulation is material (as opposed to the ideal case, in which one removes open regular neighbourhoods of the vertices from the triangulation to obtain boundary components), though as we will see in Lemma~\ref{lemma:idealdoesntmatter}, our lower bound on $\Delta(M)$ also holds for the ideal case.

Triangulation complexity arises as a means of enumerating 3-manifolds.
Censuses of 3-manifolds are constructed by considering all possible face pairings of up to $n$ tetrahedra for some $n$.
Compare this to enumerating knots by crossing number: this is another natural invariant whose connection to the geometry and topology of the underlying space (in this case, the knot complement) can be difficult to understand.
The 3-manifold censuses have been advanced to thirteen tetrahedra in the closed orientable irreducible case~\cite{MatveevTarkaev}, and in the cusped hyperbolic case, up to ten tetrahedra with possible repeats~\cite{VesninTarkaevFominykh} and up to eight with no repeats~\cite{BurtonSnapPeaComplete}.
For these two classes, one additional motivation is that triangulation complexity and Matveev complexity agree, so long as the Matveev complexity is not zero~\cite[Theorem 5]{Matveev90}.

Like Matveev complexity, triangulation complexity is submultiplicative over finite covers.
Unlike Matveev complexity and Gromov's simplicial volume, it is is not additive under connect sum: its bound in this setting is $\Delta(M\#N) \leq \Delta(M) + \Delta(N) + 4$~\cite[Construction 4.1]{Barchechat}.
This connect sum bound is in fact sharp\footnote{
The author used Regina~\cite{regina} to compute $\Delta(M\# N)$ for all $M$ and $N$ of triangulation complexity at most two, by constructing a census of closed orientable possibly reducible 3-manifolds of complexity at most 7. One example showing the bound is sharp is $\Delta(L(4,1)\#L(4,1)) = 6$ and $\Delta(L(4,1)) = 1$.
}.
Advantageously, however, only a finite number of manifolds have a given triangulation complexity.

Bounding triangulation complexity from above (at least up to a constant) is usually more straightforward than from below, as one can simply construct an efficient triangulation.
One tool for bounding it from below is that it is at least the simplicial volume (for a proof, see Proposition 0.1 in~\cite{FrancavigliaFrigerioMartelli}).
As, when $M$ is hyperbolic, its simplicial volume is proportional to its hyperbolic volume~\cite[Proposition 6.1.4]{Thurston}, this quantity can be interpreted geometrically.
However, when the simplicial volume of $M$ is zero -- that is, when $M$ is Seifert fibered or a graph manifold -- this bound is of no help to us.
This raises the question of whether we can get good estimates for $\Delta(M)$ in these cases.
We will show that, when $M$ is Seifert fibered with non-empty boundary, a particular natural triangulation of $M$ is within a multiplicative constant of the minimal triangulation size. The precise statement is as follows.

\begin{restatable}{theorem}{complexitybound}
There exists $k>0$ such that for any Seifert fibered manifold $M$ with non-empty boundary other than the solid torus, whose Seifert data is $[\Sigma, (p_1, q_1),\ldots,(p_n, q_n)]$ with (without loss of generality) $0 < q_i < p_i$ for each $i$,
$$\frac{1}{k}\left(|\chi(\Sigma)| + \sum_{i=1}^n \norm{q_i/p_i} + 1\right) \leq \Delta(M) \leq k\left(|\chi(\Sigma)| + \sum_{i=1}^n \norm{q_i/p_i} + 1 \right).$$
 \label{thm:complexitybound}
\end{restatable}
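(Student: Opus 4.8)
My plan is to establish the two inequalities separately: the upper bound by an explicit construction from the Seifert data, and the lower bound by a normal-surface argument that localises near each exceptional fibre.

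For the upper bound I would build a triangulation of $M$ directly. Triangulate the base surface $\Sigma$ with its cone points among the vertices, using $O(|\chi(\Sigma)|+n)$ triangles, and thicken this to a fibred triangulation of the product region obtained by deleting open fibred neighbourhoods of the exceptional fibres; this costs $O(|\chi(\Sigma)|)$ tetrahedra. Into each resulting torus boundary component I would glue a layered solid torus realising the exceptional fibre $(p_i,q_i)$, where the number of layers is, up to a universal constant, the quantity $\norm{q_i/p_i}$ governing the continued-fraction expansion. Summing the contributions, and absorbing the finitely many bounded small cases into the additive $+1$, yields $\Delta(M)\le k(|\chi(\Sigma)|+\sum_i\norm{q_i/p_i}+1)$; the solid torus is excluded precisely because it admits fibred structures with arbitrarily large $\norm{q_i/p_i}$ while having $\Delta=1$.

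For the lower bound, starting from an arbitrary triangulation $\mathcal{T}$ of $M$ I would pass to its $79$th barycentric subdivision and apply the lemma advertised in the abstract, making every exceptional fibre of multiplicity at least three simplicial; since barycentric subdivision inflates the tetrahedron count by a fixed factor, this is harmless for a multiplicative estimate. The multiplicity-two fibres, which that lemma does not cover, I would handle separately -- for instance by pulling back to a suitable double cover in which they acquire higher multiplicity, or by a direct count -- since each contributes only a bounded amount to $\sum_i\norm{q_i/p_i}$ and one only needs a lower bound on their number. The $|\chi(\Sigma)|$ term I would extract from an essential horizontal surface, a branched cover of $\Sigma$ with Euler characteristic proportional to $\chi(\Sigma)$: making it normal, its weight is bounded below in terms of $|\chi(\Sigma)|$ and each normal disc occupies a distinct tetrahedron.

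The term $\sum_i\norm{q_i/p_i}$ is the crux. Around each simplicial exceptional fibre the triangulation restricts to a triangulated fibred solid torus whose meridian slope is pinned down by $(p_i,q_i)$, and realising that rational slope forces $\Omega(\norm{q_i/p_i})$ tetrahedra, by the same continued-fraction obstruction that governs minimal triangulations of lens spaces. The principal difficulty is to make this local bound rigorous, uniform, and -- most importantly -- genuinely additive over the fibres, so that the separate contributions do not cancel against one another. I expect the cleanest route is a parallelity-bundle decomposition in the style of Lackenby and Purcell: after cutting $M$ along the horizontal surface and the essential vertical annuli, the triangulation splits into a product-like parallelity part, accounting for the $|\chi(\Sigma)|$ term, together with a controlled number of guts blocks surrounding the fibres, each of which must contain a sub-triangulation of the corresponding fibred solid torus of the prescribed slope. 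Arranging that the annular cutting is compatible with the simplicial fibres, so that these local counts are simultaneously achievable and add up, is the delicate step that the large subdivision is there to purchase.
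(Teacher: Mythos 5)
Your outline matches the paper's strategy at a high level (layered solid tori for the upper bound; simplicial singular fibres plus a parallelity-bundle analysis and a double cover for the lower bound), but two steps as you state them do not go through. First, the $|\chi(\Sigma)|$ term: your claim that a normalised essential horizontal surface has weight bounded below by $|\chi(\Sigma)|$ with ``each normal disc occupying a distinct tetrahedron'' is false --- a single tetrahedron can contain arbitrarily many parallel normal discs, so the weight of a normal surface gives no lower bound on the number of tetrahedra; moreover the horizontal surface is a branched cover of $\Sigma$ of degree divisible by every $p_i$, so its Euler characteristic is not proportional to $\chi(\Sigma)$ anyway. The paper instead uses the elementary observation that a triangulation with $t$ tetrahedra has at most $6t$ edges, hence $b_1(M)\le 6t$, while the Seifert presentation of $\pi_1(M)$ shows $H_1(M)$ has free rank at least $|\chi(\Sigma)|+1$ (Lemma~\ref{lemma:easylowerbound}). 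You should replace your normal-surface argument with this.

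Second, and more seriously, the crux you correctly identify --- that a triangulated fibred solid torus of meridian slope $q_i/p_i$ ``forces $\Omega(\norm{q_i/p_i})$ tetrahedra by the same continued-fraction obstruction as for lens spaces'' --- is exactly the part that needs a proof, and ``guts blocks each containing a sub-triangulation of the prescribed slope'' is not an argument. The lens-space obstruction compares \emph{two} slopes via distance in the Farey graph, so one must first manufacture a second, bounded-length reference slope inside each block. The paper does this by making the core curve itself simplicial with a meridian of bounded length (Theorem~\ref{thm:simplicialfibres} together with Corollary~\ref{corr:nicehandlestructure}), drilling it out to obtain a copy of $T^2\times I$ carrying a length-$96$ curve on one boundary component and a simplicial $(q_i,p_i)$ curve on the other, and then proving a quantitative lower bound for such products (Lemma~\ref{lemma_torus_product_bound}) via the quasi-isometry between the spine graph $Sp(T^2)$ and the Farey tessellation, resting on~\cite[Theorem 9.1]{LacPurSol}. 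Without this drilling step and the $T^2\times I$ lemma, the ``delicate step'' you defer is not purchased by any amount of barycentric subdivision. (Your treatment of multiplicity-two fibres is also slightly off: in the paper's double cover they become regular fibres, not higher-multiplicity ones, and their count is recovered from the Euler characteristic of the covering base surface rather than a direct count.)
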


The solid torus, with $\Delta(D^2\times S^1) = 1$, is an exception as it has infinitely many non-isomorphic Seifert fibrations (it is $[D^2, (p, q)]$ for all $(p, q)$), and so no such lower bound can hold for it.

The term $\norm{q/p}$ is defined as follows.
The positive continued fraction of a reduced fraction $q/p$ with $q,p > 0$ is a sequence of integers $[a_0,\ldots,a_n]$ with $a_i > 0$ for all $i>0$ such that 
\begin{equation*}
    \frac{q}{p} = a_0 + \frac{1}{a_1 + \frac{1}{a_2 + \frac{1}{\cdots + \frac{1}{a_n}}}}.
\end{equation*}
The requirement that $a_i > 0$ for $i >0$ means that this continued fraction is unique.
Then $\norm{\frac{q}{p}}$ is defined to be $\sum_{i=0}^n a_i$.

This triangulation complexity result is very different to the Matveev complexity situation: there, it is known that the Matveev complexity of $M$ is at most $\sum_{i=1}^n \max(\norm{p_i/q_i}-3, 0)$~\cite[Theorem 1.2]{FominykhWiest}, and, in particular, is zero if $M$ has no singular fibres.

\begin{figure}[ht]
  \centering
  \resizebox{!}{.10\textheight}{\includegraphics{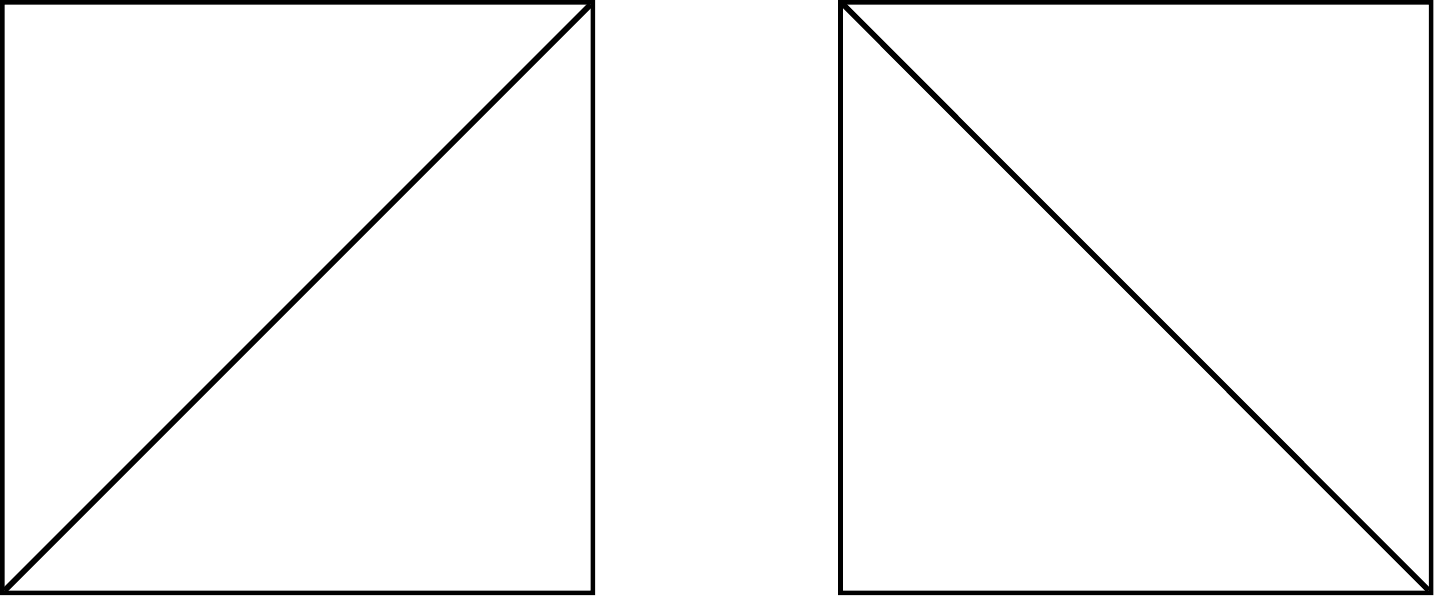}}
  \caption{A 2-2 Pachner move.}
  \label{fig:22_pachner}
  \vspace*{-15ex}  
	\begin{center}
	$\to$
	\end{center}
	\vspace*{11ex}
\end{figure}

\begin{figure}[ht]
  \centering
  \begin{subfigure}[b]{\textwidth}
  	\centering
   	\resizebox{0.45\textwidth}{!}{\includegraphics{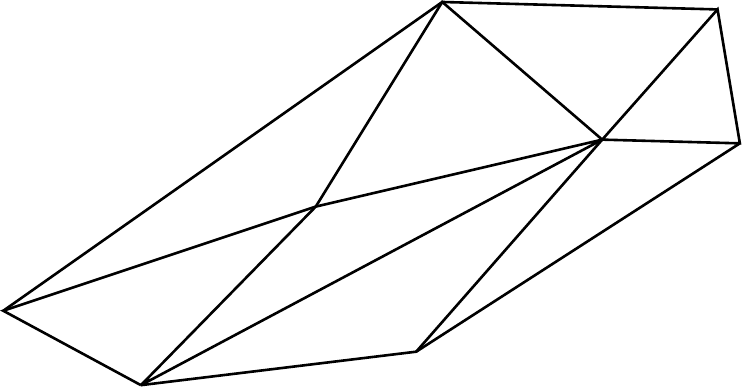}}
   	\resizebox{0.45\textwidth}{!}{\includegraphics{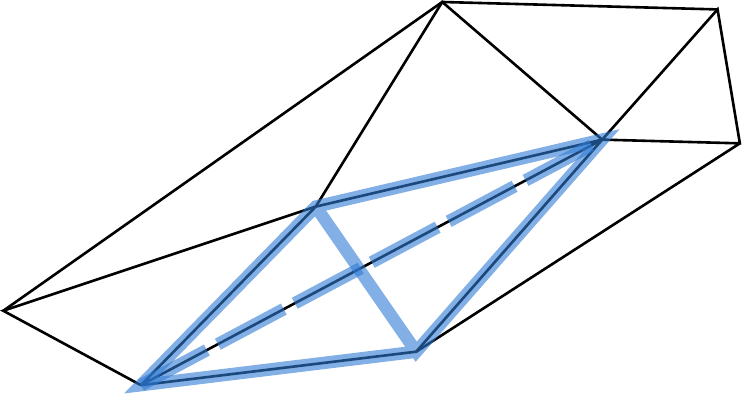}}
	\caption{Gluing the blue tetrahedron to the surface.}
  \end{subfigure}
 \begin{subfigure}[b]{\textwidth}
  	\centering
   	\resizebox{0.45\textwidth}{!}{\includegraphics{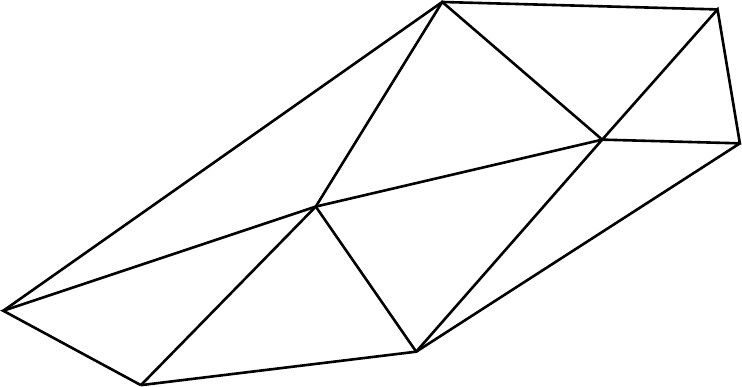}}
	\caption{The new induced surface triangulation.}
  \end{subfigure}  \caption{Given a surface with an induced triangulation, if we glue on the blue tetrahedron, the new induced triangulation is the result of a 2-2 Pachner move.}
  \label{fig:layered_triang}
\end{figure}

The triangulation that gives the upper bound, which we describe more precisely in the proof of Proposition~\ref{prop:upper_bound}, is roughly as follows: let $\Sigma'$ be $\Sigma$ with $n$ additional boundary components added.
Triangulate $\Sigma'\times S^1$ using $k|\chi(\Sigma')|$ tetrahedra.
For each singular fibre, glue a \emph{layered solid torus} with an edge of slope $\frac{q_i}{p_i}$ into one of the boundary components.
These are triangulations of the solid torus built by taking its triangulation by a single tetrahedron~\cite{Baker} and gluing tetrahedra to it by identifying two faces of a tetrahedron with two adjacent faces of the boundary triangulation.
The effect of such a gluing is to execute a \emph{2-2 Pachner move} on the boundary triangulation.
A 2-2 Pachner move is shown in Figure~\ref{fig:22_pachner}, and Figure~\ref{fig:layered_triang} depicts an example of performing one by gluing on a tetrahedron.

For the lower bound, as we show in Lemma~\ref{lemma:easylowerbound} the linear dependence on $|\chi(\Sigma)|$ is straightforward; it is the singular fibres that pose some difficulty.
The main technical result we will use to handle them is of independent interest.
\begin{restatable}{theorem}{simplicialfibres}
    \label{thm:simplicialfibres}
    Let $M$ be a Seifert fibered space with non-empty boundary and let $\mathcal{T}$ be a (material) triangulation of $M$.
    The collection of singular fibres of $M$ that are not of multiplicity two have disjoint simplicial representatives in $\mathcal{T}^{(79)}$, the $79^{th}$ barycentric subdivision of $\mathcal{T}$.
    Furthermore, in $\mathcal{T}^{(82)}$, these simplicial singular fibres have disjoint simplicial solid torus neighbourhoods such that there is a simplicial meridian curve of length 48 for each such neighbourhood.
\end{restatable}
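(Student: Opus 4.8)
The plan is to show that a singular fibre, which is a core curve of some fibered solid torus neighbourhood, can be isotoped to run simplicially through a controlled number of subdivisions, and that this number is uniformly bounded independent of the triangulation. My starting point is the observation that a singular fibre $\gamma$ of multiplicity not equal to two sits inside a fibered solid torus, and the key local model is how the fibration interacts with the simplices of $\mathcal{T}$. Since a single fibre is a simple closed curve, the natural strategy is to control its intersection pattern with the 2-skeleton: after enough barycentric subdivisions the induced cell structure on $\gamma$'s neighbourhood becomes fine enough that normal-curve or normal-surface theory applies, and one can straighten $\gamma$ simplex-by-simplex.

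First I would establish that it suffices to work locally in a fibered solid torus neighbourhood of each singular fibre, using the Seifert structure to separate the fibres (so that disjointness of the representatives reduces to working in disjoint solid tori). Next, I would invoke normal surface theory: in some fixed barycentric subdivision the meridian disk of the fibered solid torus can be made normal, and its boundary is a normal curve on the boundary torus of the neighbourhood. The combinatorics of normal curves on a triangulated torus is what produces the explicit length bound of $48$ and the explicit subdivision counts $79$ and $82$. I would count carefully how each barycentric subdivision refines the normal coordinates, tracking how many subdivisions are needed to (i) realise the core as an edge path, then (ii) produce an embedded simplicial solid torus neighbourhood, and finally (iii) exhibit a simplicial meridian of bounded length. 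The excepted case of multiplicity two is presumably where the meridian slope forces a longer normal curve or a degenerate disk, so I would isolate that case and note where the argument breaks down.

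The hard part, and the step I expect to be the main obstacle, is obtaining \emph{uniform} constants that are independent of the Seifert data and of the particular triangulation $\mathcal{T}$. A naive normal-surface argument would give bounds depending on the number of tetrahedra, or on the multiplicities $p_i$; the whole point of the statement is that three fixed barycentric subdivisions suffice to go from a simplicial fibre to a simplicial neighbourhood with a length-$48$ meridian. Achieving this requires a local finiteness argument: in a barycentric subdivision, the link of any edge or vertex has a bounded combinatorial type, so the possible ways $\gamma$ can meet a simplex fall into finitely many models, and one analyses each model once. The precise numbers $79$, $82$, $48$ would come from chasing through these finitely many local configurations and the three extra subdivisions needed to fatten the curve into a solid torus and read off a short meridian, so the real work is setting up the bounded local combinatorics so that these constants emerge uniformly rather than growing with the input.
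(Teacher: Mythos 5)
Your proposal correctly identifies the crux --- that the constants must be uniform in $\mathcal{T}$ and in the Seifert data --- but the mechanism you propose does not deliver it, and it diverges from what actually makes the theorem work. The central obstruction is global, not local: the solid torus neighbourhood of a singular fibre cut out by a normal annulus can contain exponentially many parallel normal pieces, so a core curve may a priori have to pass through a single tetrahedron of $\mathcal{T}$ an unbounded number of times. Your ``local finiteness'' argument (finitely many models for how $\gamma$ meets a simplex of a barycentric subdivision) controls the local picture but says nothing about this global multiplicity, and your appeal to normal curve combinatorics on the boundary torus is worse: a normal meridian disc of the neighbourhood has no a priori weight bound, so its boundary curve has no bounded length, and the constant $48$ does not come from normal curves at all --- it comes from the fact that the link of an edge created in a barycentric subdivision is a circle of at most six edges, doubled three times by the three further subdivisions from $\mathcal{T}^{(79)}$ to $\mathcal{T}^{(82)}$.

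The actual proof resolves the global difficulty by analysing the \emph{parallelity bundle} of the handle structure induced on the cut-out neighbourhood $(S_i, A_i)$: one shows its components are $I$-bundles over discs, annuli or M\"obius bands, rules out M\"obius band bundles precisely because the multiplicity is not two (this is where the hypothesis enters --- a vertical M\"obius band bundle forces a meridian disc that is a bigon over the cone point, i.e.\ multiplicity two --- not, as you guessed, a longer normal curve), removes the annulus and boundary-touching disc bundles by isotopy, and replaces the rest by $2$-handles. This produces a handle structure with boundedly many handles each met boundedly often, to which one applies Lackenby's core-curve theorem (Theorem~\ref{thm:laccorecurves}) to get a core curve meeting each $0$- and $1$-handle in a bounded number of straight arcs, and then the Lackenby--Schleimer machinery to make such curves simplicial in $\mathcal{T}^{(79)}$. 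Without some substitute for the parallelity bundle analysis and the core-curve theorem, your outline has no way to bound how often the fibre meets a tetrahedron, and the uniform subdivision count cannot be extracted.
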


In Section~\ref{section:continuedfractions} we discuss the connection between continued fractions, the Farey graph, and layered solid tori. We give background on handle structures and parallelity bundles in Section~\ref{section:parallelity}. When cutting a manifold along a normal surface, parallelity bundles are a means of efficiently organising and studying the pieces that lie between parallel normal discs.
In Section~\ref{section:simplicialfibres}, the heart of the paper, we prove Theorem~\ref{thm:simplicialfibres} by analysing the parallelity bundle, and then in Section~\ref{section:triangulationbounds} we prove the main theorem.

\section{Continued fractions and the Farey graph}
\label{section:continuedfractions}

The term $\norm{q/p}$ in Theorem~\ref{thm:complexitybound} arises via Dehn filling tori to create singular fibres with parameter $q/p$.
This operation glues in a solid torus by the map taking its meridian to an edge of slope $q/p$ in the (section, fibre) coordinates on a surface bundle $M$, which we write as $(\mu, \lambda)$. 

\begin{prop}
Let $T$ be a one-vertex triangulation of the torus, and let $\lambda$ and $\mu$ be two of its edges.
Let $U$ be the manifold resulting from gluing a solid torus to $T$ by the map taking its meridian to $p\mu + q\lambda$, where $0 < |q| < p$.
There is a triangulation of $U$, with induced boundary triangulation $T$, by at most $\norm{\frac{|q|}{p}}+2$ tetrahedra.
\label{prop:layeredtriangulationcount}
\end{prop}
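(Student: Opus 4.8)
The plan is to identify the combinatorics of the boundary torus triangulation with the Farey tessellation of $\QQ \cup \{\infty\}$ and to realize the desired solid torus as a layered solid torus corresponding to a path in this tessellation. First I would fix a basis of the first homology of the boundary torus so that $\mu$ and $\lambda$ become the slopes $1/0$ and $0/1$; since $T$ is a one-vertex triangulation of the torus, its three edges are pairwise ``Farey neighbours'' (their slopes $a/b$ and $c/d$ satisfy $|ad-bc|=1$), so $T$ corresponds to a single triangle of the Farey tessellation, namely $\{1/0,\, 0/1,\, 1/1\}$ after possibly replacing $\lambda$ by $-\lambda$ to arrange $q>0$. Under this identification the prescribed meridian $p\mu + q\lambda$ is the vertex $p/q$, and the hypothesis $0 < q < p$ guarantees $p \geq 2$, so $p/q$ is not one of the three slopes of $T$ and a genuine (non-degenerate) solid torus results.

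Next I would recall the layered solid torus construction and its dictionary with the Farey tessellation: layering a single tetrahedron onto two adjacent boundary faces performs a 2-2 Pachner move, which replaces exactly one of the three boundary slopes by the mediant on the opposite side, that is, it crosses one edge of the Farey tessellation into an adjacent triangle. A layered solid torus is therefore specified by a path of triangles in the tessellation together with an innermost base tetrahedron whose fold creates the meridian disc. Crucially, the base fold fixes the meridian slope, and all subsequent layerings take place in a collar of $\partial U$ and leave the meridian disc untouched. So to build $U$ I would take the base tetrahedron realizing meridian $p/q$ and then layer tetrahedra, one per edge crossed, along the dual path from a triangle incident to the vertex $p/q$ out to the triangle $\{1/0,\,0/1,\,1/1\} = T$. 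The resulting triangulation has induced boundary exactly $T$ and meridian $p\mu + q\lambda$ by construction, reducing the proposition to a count of the triangles on this Farey path.

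Finally I would carry out the count using continued fractions. The navigation from the root triangle $\{1/0,\,0/1,\,1/1\}$ towards the vertex $p/q$ is precisely the descent to $p/q$ in the Stern--Brocot tree, in which consecutive moves in a fixed direction are recorded by the successive coefficients of the continued fraction; the number of edges crossed equals the depth of $p/q$, which is $\norm{p/q} - 1$. Since the reciprocal identity $\norm{p/q} = \norm{|q|/p}$ holds (passing from $p/q>1$ to $|q|/p<1$ only prepends a vanishing $a_0 = 0$ to the continued fraction and so leaves the coefficient sum unchanged), the number of layered tetrahedra is $\norm{|q|/p} - 1$, and adding the single base tetrahedron gives $\norm{|q|/p}$, comfortably within the claimed bound $\norm{|q|/p}+2$.

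The step I expect to require the most care is the base case together with the bookkeeping of the additive constant: precisely identifying the one- or two-tetrahedron base solid torus whose meridian is $p/q$, checking that its boundary triangle sits where the Farey path predicts, and confirming that the meridian disc genuinely survives every layering, so that the meridian of $U$ really is $p\mu+q\lambda$ rather than some image of it. These degenerate endpoints, rather than the generic layering, are where an off-by-one could creep in, which is exactly what the slack of $+2$ in the statement is there to absorb. I would also need to treat the orientation convention for the sign of $q$, using the mirror-image triangle $\{1/0,\,0/1,\,-1/1\}$ so that the same argument covers all $0 < |q| < p$.
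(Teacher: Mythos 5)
Your proposal follows essentially the same route as the paper: identify one-vertex torus triangulations with triangles of the Farey tessellation, realise $U$ as a layered solid torus whose layers trace the Farey geodesic from the meridian slope to $T$, and count the layers by the continued fraction sum $\norm{|q|/p}-1$. The only imprecision is your final count of $\norm{|q|/p}$ tetrahedra: the one-tetrahedron base solid torus does not have the meridian as an edge of its boundary triangulation (its boundary slopes are $l$, $l-m$, $2l-m$, so one further layer is needed before the Farey path to the meridian's link begins), and the third edge of $T$ may be $\mu-\lambda$ rather than $\mu+\lambda$, which can cost one more layer -- these two tetrahedra are exactly the paper's $+2$, and you correctly flagged the base case and the sign convention as the places where this slack gets used.
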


Consider the Farey graph in the hyperbolic plane: identifying $\HH^2$ with the upper half plane, this is the graph whose vertices are the points of $\QQ\cup \{\infty\}$ on the real line, and whose edges are, for each pair of reduced fractions $p/q$ and $r/s$ such that $|ps-qr| = 1$ (interpreting $\infty$ as $\frac{1}{0}$), the geodesic in $\HH^2$ between the corresponding vertices.
The Farey tessellation is the resulting tessellation of $\HH^2$ into ideal triangles.

Let $Tr(T^2)$ be the one-vertex triangulation graph of the torus (up to isotopy). Its vertices are one-vertex triangulations of the torus and its edges are 2-2 Pachner moves (as in Figure~\ref{fig:22_pachner}).
There is a natural identification of $Tr(T^2)$ and the Farey tessellation.
The number $|ps-qr|$ is exactly the intersection number between curves of slopes $p/q$ and $r/s$ on the torus.
Interpret a triangle of the Farey tessellation, whose vertices are three slopes that pairwise intersect once, as the triangulation of $T^2$ by those slopes.
A 2-2 Pachner move consists of removing one of the slopes and replacing it with the unique other possible slope to complete the triangulation, so corresponds to crossing an edge of the tessellation.

When we glue in a layered solid torus, we start with a single tetrahedron triangulation of the solid torus that induces a one-vertex triangulation of its boundary (see~\cite{Baker} for an illustration).
Let $k$ and $l$ be two of its edges, chosen so that the coordinate $m = l-k$ is a meridian and $l$ is a longitude.
Note then that the three edges of this triangulation of $T^2$ are $l$, $l-m$ and $2l-m$.
As illustrated in Figure~\ref{fig:layered_triang}, gluing on a tetrahedron by two of its faces has the effect on the boundary triangulation of a 2-2 Pachner move.
We can thus glue on one tetrahedron so that the boundary triangulation is $(l, l-m, m)$.

The gluing map we wish to execute is $m\mapsto p\mu + q\lambda$ and $l\mapsto r\mu + s\lambda$, where $r$ and $s$ are integers so that $ps-qr = 1$.
It is important to note that as our only requirement was the meridian gluing, any such $r$ and $s$ will do.

It is thus sufficient to determine how many tetrahedra we need to glue on to our original triangulation $(\mu, \lambda, \lambda \pm \mu)$ such that the new boundary triangulation has an edge of slope $q/p$.
This is equivalent to asking for the distance in the Farey tessellation from $(0, \infty, \pm 1)$ to the line of vertices that are adjacent to the $q/p$ slope.
Note that, up to the choice of $\pm 1$, this distance is the same for $q/p$ and $-q/p$, so this quantity is equal to the distance from the line of vertices adjacent to the $|q|/p$ slope to the closer of $(0, \infty, 1)$ and $(0, \infty, -1)$.
As we have required that $0 \leq |q| \leq p$, this is equal to the distance from the \emph{line} of faces adjacent to $\infty$ to the line of faces adjacent to $|q|/p$, which is $\norm{|q|/p}-1$~\cite[Lemma 5.4]{LacPurSol}.
We may need to add an extra tetrahedron to take us from $(0, \infty, 1)$ to $(0, \infty, -1)$ or vice versa.
Thus this layered triangulation possibly first has a tetrahedron to achieve the closest choice of the $\pm 1$ parameter, and then has $\norm{q/p}-1$ tetrahedra to change the coordinates to have a $q/p$ slope edge, and finally two tetrahedra to have the effect of gluing in a solid torus with the correct meridian slope: a total of $\norm{|q|/p}+2$ tetrahedra.
We can note that the configuration $(0, \infty, \pm 1)$ is invariant under exchanging the coordinates.
Doing this takes $q/p$ to $p/q$, and we do indeed see that $\norm{|q|/p} = \norm{p/|q|}$.

We will need the following lemma which gives another symmetry in the continued fraction sum.

\begin{lemma}
\label{lemma:negative_same_continued_sum}
Suppose $0 < q < p$.
Then $\norm{\frac{q}{p}} = \norm{\frac{p-q}{p}}$.
\end{lemma}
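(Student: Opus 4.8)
The plan is to compute the positive continued fraction of $(p-q)/p$ directly from that of $q/p$ and to observe that the two expansions have the same digit sum. Since $0 < q < p$, the fraction lies in $(0,1)$, so I write $q/p = [0; a_1, \ldots, a_n]$ (the leading term $a_0 = 0$), with each $a_i > 0$ for $i > 0$, so that $\norm{q/p} = a_1 + \cdots + a_n$. The whole proof rests on pushing the operation $x \mapsto 1 - x$ inside the continued fraction using the two elementary identities $1 - 1/x = 1/\bigl(1 + 1/(x-1)\bigr)$ (valid for $x > 1$) and $1 - 1/(1+\beta) = 1/(1 + 1/\beta)$ (valid for $\beta > 0$), applied at the top level of the expansion.

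I would split on the leading partial quotient $a_1$. If $a_1 \geq 2$, set $x = a_1 + [0; a_2, \ldots, a_n] > 1$, so $q/p = 1/x$; since $x - 1 = [a_1 - 1; a_2, \ldots, a_n]$ with $a_1 - 1 \geq 1$, the first identity yields $(p-q)/p = 1 - q/p = [0; 1, a_1 - 1, a_2, \ldots, a_n]$, a legitimate positive continued fraction whose digit sum is $1 + (a_1 - 1) + a_2 + \cdots + a_n = \norm{q/p}$. If instead $a_1 = 1$, note that $q/p \neq 1$ forces $n \geq 2$; writing $q/p = 1/(1+\beta)$ with $\beta = [0; a_2, \ldots, a_n]$ and using $1/\beta = [a_2; a_3, \ldots, a_n]$, the second identity gives $(p-q)/p = [0; 1 + a_2, a_3, \ldots, a_n]$, whose digit sum is $(1 + a_2) + a_3 + \cdots + a_n = \norm{q/p}$. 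In both cases the output is a valid expansion with the same digit sum, which is exactly the claim.

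The only real obstacle is bookkeeping of the short and degenerate cases, which I would verify explicitly: when $n = 1$ we have $q/p = 1/a_1$ (so $a_1 \geq 2$) and the formula correctly returns $(p-q)/p = (a_1 - 1)/a_1 = [0; 1, a_1 - 1]$; and when $a_1 = 1$ the transformation never produces a spurious leading term because of the constraint $n \geq 2$. I would also remark, to dispel any worry about canonical forms, that $\norm{\cdot}$ is insensitive to the standard last-term ambiguity $[\ldots, a_n] = [\ldots, a_n - 1, 1]$, which changes the digit sum by $-1 + 1 = 0$, so the argument is unaffected by how uniqueness of the expansion is normalised.

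As an aside, there is a more conceptual route that reuses the machinery already set up: the map $z \mapsto 1 - z$ is an isometry of $\HH^2$ preserving the Farey tessellation and fixing $\infty$, and it carries the vertex $q/p$ to $(p-q)/p$. Since, as recorded in the discussion above, $\norm{q/p} - 1$ is the Farey distance from the line of faces adjacent to $\infty$ to the line of faces adjacent to $q/p$, this isometry equates the two distances, hence the two norms. I expect the continued-fraction computation to be the cleaner thing to write out in full, with the geometric symmetry serving as a sanity check.
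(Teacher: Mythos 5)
Your main argument is correct, but it takes a genuinely different route from the paper. The paper's proof is exactly your closing aside: it uses the fact (already established in Section~2, via \cite[Lemma 5.4]{LacPurSol}) that $\norm{q/p}-1$ is the distance in the Farey tessellation from the line of faces about $q/p$ to the line about $\infty$, and then observes that reflection of $\HH^2$ in the vertical line through $1/2$ preserves the tessellation, fixes $\infty$, and carries $q/p$ to $(p-q)/p$, so the two distances coincide. (Minor quibble with your phrasing of that aside: the isometry is $z\mapsto 1-\bar z$, not $z\mapsto 1-z$, though its action on the boundary vertices is $x\mapsto 1-x$ as you say.) Your primary proof instead computes the positive continued fraction of $(p-q)/p$ directly from that of $q/p$ via the identities $1-1/x = 1/\bigl(1+1/(x-1)\bigr)$ and $1-1/(1+\beta)=1/(1+1/\beta)$, splitting on whether $a_1\geq 2$ or $a_1=1$; I have checked both cases and the degenerate ones ($n=1$, and $n\geq 2$ forced when $a_1=1$), and the digit sums match as claimed. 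Your remark that $\norm{\cdot}$ is insensitive to the last-term ambiguity $[\ldots,a_n]=[\ldots,a_n-1,1]$ is a worthwhile addition, since it makes the argument independent of how the expansion is normalised. What each approach buys: the paper's reflection argument is a one-line corollary of machinery it needs anyway and generalises immediately to any symmetry of the Farey tessellation fixing $\infty$, whereas your computation is elementary, self-contained, and has the advantage of explicitly exhibiting the continued fraction of $(p-q)/p$, not merely its digit sum.
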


\begin{proof}
Note that $\norm{\frac{q}{p}}-1$ is the distance from the line about $\frac{q}{p}$ to the line about infinity.
By reflecting in the vertical line through $1/2$, which takes $\frac{q}{p}$ to $\frac{p-q}{p}$, we see that $\norm{\frac{q}{p}}-1$ is equal to the distance from the line about $\frac{p-q}{p}$ to infinity, which is $\norm{\frac{p-q}{p}}-1$.
\end{proof}

\section{Handle structures and parallelity bundles}
\label{section:parallelity}
We will mostly work in the setting of handle structures rather than triangulations, as they are more amenable to being cut along normal surfaces.
\begin{convention}
We take the definition of a handle structure to require the following:
\begin{enumerate}
    \item each $k$-handle, with product structure $D^k\times D^{3-k}$, intersects the handles of lower index in exactly $\del D^k \times D^{3-k}$, and is disjoint from the other $k$-handles;
    \item 1-handles and 2-handles intersect in a manner compatible with their respective product structures; that is, a 1-handle $D^1\times D^2$ intersects a 2-handle $D^2\times D^1$ in segments of the form $D^1\times \gamma$ in the 1-handle and $\lambda\times D^1$ in the 2-handle, where $\gamma$ and $\lambda$ are collections of arcs in $\del D^2$ in the respective product structures.
\end{enumerate}
\label{convention:handlestructure}
\end{convention}

As all our 3-manifolds have non-empty boundary, by the following lemma we can assume that they have no 3-handles.
\begin{lemma}
Let $\mathcal{H}$ be a handle structure of a connected 3-manifold with non-empty boundary.
There is a collection of 2-handles in $\mathcal{H}$ such that removing these 2-handles and all the 3-handles in $\mathcal{H}$ does not change the resulting 3-manifold up to homeomorphism.
\label{lemma:remove3cells}
\end{lemma}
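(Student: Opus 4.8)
The plan is to remove 3-handles and redundant 2-handles by a duality/collapsing argument driven by the non-empty boundary. The key observation is that a handle structure of $M$ is dual to a handle structure of $M$, and with non-empty boundary we have room to collapse from the boundary inward. First I would set up the intuition via the associated CW-structure or spine: a handle structure determines a chain complex, and the existence of non-empty boundary means $M$ deformation retracts onto a 2-complex (it has a spine of dimension at most $2$), so the 3-handles should be entirely cancellable.

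\textbf{Step 1.} Reduce to a connectivity statement. Since $M$ is connected with $\partial M \neq \emptyset$, I would argue that we may cancel each 3-handle against an incident 2-handle. Concretely, consider the ``dual'' picture where 3-handles become 0-handles; the union of all 3-handles and 2-handles, thickened appropriately, is a handlebody-like region, and the complement of a collar of $\partial M$ can be collapsed. The cleanest route is to produce a sequence of \emph{elementary collapses}: a free face (a 2-handle meeting a unique 3-handle along a single attaching region) allows us to collapse the pair. So I would show that as long as a 3-handle remains, there is always such a free face available, using that $\partial M \neq \emptyset$ guarantees the handle structure does not ``close up''.

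\textbf{Step 2.} Make precise which 2-handles to remove. Build a bipartite incidence graph $G$ whose nodes are the 3-handles and the 2-handles, with an edge whenever a 2-handle meets a 3-handle along (part of) $\partial D^3 \times D^0$. I would choose a spanning forest in each connected component touching a 3-handle and use it to pair off 3-handles with 2-handles, cancelling each pair by an isotopy that ``pushes'' the 3-handle out through its chosen 2-handle to the rest of the structure. Removing a $(2,3)$-cancelling pair does not change the homeomorphism type of the underlying manifold, since it is exactly an isotopy of the attaching region across a collar; iterating eliminates all 3-handles and a matched collection of 2-handles. The 2-handles we remove are precisely those selected by the forest; all remaining 2-handles and the lower-index handles stay.

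\textbf{Main obstacle.} The hard part will be verifying that at every stage a valid cancelling pair exists and that removing it keeps the remaining object a genuine handle structure satisfying Convention~\ref{convention:handlestructure} — in particular that the compatibility of 1-handle/2-handle intersections is preserved and that we never strand a 3-handle with no incident 2-handle available to cancel. This is where non-emptiness of $\partial M$ is essential: it ensures the $2$--$3$ incidence graph, restricted to handles ``above'' the boundary collar, has no closed component consisting solely of 3-handles, so the collapsing process always terminates with zero 3-handles. I would handle this by an induction on the number of 3-handles, at each step invoking connectivity of $M$ and the presence of boundary to locate a free face, then checking that the local cancellation is supported in a ball and therefore a homeomorphism of $M$.
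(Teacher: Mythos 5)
Your overall strategy is the same as the paper's: eliminate the 3-handles by cancelling each against a 2-handle that has a free face on the (current) boundary, working inward from $\partial M$. But the crucial step is exactly the one you defer: showing that, as long as a 3-handle remains, some 2-handle has one of its two faces $D^2\times\{\pm 1\}$ on the boundary and the other glued to a 3-handle. You write that you ``would show that \ldots there is always such a free face available'' and later flag this as the main obstacle, but no mechanism is given, and the combinatorial device you propose cannot supply one: your bipartite incidence graph records only 3-handle/2-handle adjacencies and contains no vertex for $\partial M$, so a spanning forest of it has no preferred root and cannot certify that a component containing a 3-handle ever reaches the boundary. Indeed, connectivity of $M$ is realised through all the handles (including the 0- and 1-handles), not just the 2- and 3-handles, so an argument confined to the 2--3 incidence graph cannot see why a ``closed'' component -- one in which every 2-handle adjacent to a 3-handle has both faces glued to 3-handles -- is impossible. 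Ruling that out is the entire content of the lemma.

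The paper closes this gap by passing to the dual cell complex: each $k$-handle contributes a $(3-k)$-cell and each component of intersection of a $k$-handle with $\partial M$ contributes a $(3-k-1)$-cell, so 3-handles become vertices, 2-handles become edges, and the boundary itself contributes vertices (from 2-handle$\,\cap\,\partial M$) and edges (from 1-handle$\,\cap\,\partial M$). Since $M$ is connected, the 1-skeleton of this complex is connected, so every 3-handle vertex admits an edge path to a boundary vertex; the last edge of a shortest such path is dual to a 2-handle joining a 3-handle to $\partial M$, which is precisely the free cancelling pair. Removing that pair (a ball meeting the rest of the structure in a disc, with the complementary disc of its boundary sphere on $\partial M$) and inducting finishes the proof. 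Your remaining worries -- that the deletion preserves Convention~\ref{convention:handlestructure} and that each cancellation is a collar push -- are genuine but easily checked, since deleting handles only removes intersections with \emph{higher}-index handles; the missing idea is the path-to-boundary argument in a dual complex that actually contains the boundary.
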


\begin{proof}
Form a cell complex for the 3-manifold from $\mathcal{H}$ that has a $(3-k)$-cell for each $k$-handle, glued in the corresponding way. Also glue in a $(3-k-1)$-cell for each component of intersection of a $k$-handle and the boundary.
The interior cells of this complex are dual to handles of $\mathcal{H}$.
As this cell complex is connected, its 1-skeleton is connected, so each vertex has a path of edges from it to the boundary.
The last edge in this path is dual to a 2-handle which is incident to a 3-handle.
Remove this pair of handles.
By induction, we can repeat this to remove all the 3-handles.
\end{proof}

Although we are working with material triangulations of $M$ rather than ideal ones, the following well-known result demonstrates that our lower bound also holds in the ideal setting.
An ideal triangulation is a collection of tetrahedra with face pairing maps such that the space formed from gluing together the tetrahedra and then removing all their vertices is homeomorphic to the interior of $M-\del M$.

\begin{lemma}
Let $\Delta(M)$ be the minimal number of tetrahedra in a material triangulation of a compact 3-manifold $M$ with boundary, and let $\Delta'(M)$ be the minimal number of tetrahedra in an ideal triangulation of $M$.
Then $\Delta(M) \leq 14\Delta'(M)$.
\label{lemma:idealdoesntmatter}
\end{lemma}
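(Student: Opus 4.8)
The plan is to start from a minimal ideal triangulation $\mathcal{T}'$ of $M$, with $n = \Delta'(M)$ ideal tetrahedra, and convert it into a material triangulation by \emph{truncating} and then subdividing, controlling the number of material tetrahedra produced per ideal tetrahedron. Removing an open regular neighbourhood of the (ideal) vertex set of $\mathcal{T}'$ turns each ideal tetrahedron into a truncated tetrahedron: a convex cell with four \emph{hexagonal} faces (one for each $2$-face of $\mathcal{T}'$) and four small \emph{truncation triangles} (one per ideal vertex, lying on $\del M$). These cells glue along their hexagonal faces to give exactly the compact manifold $M$, so it suffices to subdivide each truncated tetrahedron into boundedly many genuine tetrahedra in a way that is consistent on the shared hexagonal faces; then $\Delta(M)$ is at most that per-cell bound times $n$.

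The consistency requirement is the crux. Two truncated tetrahedra, or one truncated tetrahedron glued to itself, meet along a hexagon, and the two induced triangulations of that hexagon must agree. I would handle this by fixing, once and for all, a total order on the vertices of the truncated cell structure and triangulating every hexagonal face as the \emph{fan} from its least vertex. Since this rule depends only on the vertex labels, it produces the same triangulation of a hexagon from whichever side it is viewed, so the subdivisions automatically match across all shared faces, including in the presence of self-gluings.

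With the boundary fans fixed, I would subdivide the interior of each truncated tetrahedron by \emph{coning from its least-labelled vertex} $p$. Because the truncated tetrahedron is convex, this produces the tetrahedra $p * \tau$ as $\tau$ ranges over the boundary triangles not incident to $p$; it introduces no new vertices, and, crucially, its interior faces lie entirely inside the single cell, so it does not interfere with neighbouring cells. It then remains to count. After fanning, the boundary of a truncated tetrahedron carries at most $4\cdot 4 + 4 = 20$ triangles, and $p$, being least and hence the fan apex of each incident hexagon, is incident to enough of them that the number of cone tetrahedra $p * \tau$ is bounded; a careful accounting over the possible local gluing patterns shows this number is at most $14$, yielding a material triangulation with at most $14\,n$ tetrahedra and hence $\Delta(M) \le 14\,\Delta'(M)$.

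The main obstacle is precisely this final bookkeeping. One must verify that the cone-from-least-vertex count never exceeds the stated constant \emph{uniformly}, in particular when faces or edges of an ideal tetrahedron are identified with one another, so that the truncated cell has fewer than twelve distinct vertices and a correspondingly irregular induced boundary triangulation. Checking that the worst such case still fits under $14$, rather than establishing consistency (which the canonical labelling rule settles cleanly), is where the real work lies.
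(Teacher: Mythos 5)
Your overall route is the same as the paper's: truncate, triangulate each hexagonal face into four triangles by coning/fanning from one of its corners (giving a boundary sphere of $4\cdot 4+4=20$ triangles per truncated cell), and then cone each truncated tetrahedron from a corner, so that the number of material tetrahedra per cell is $20$ minus the number of boundary triangles incident to the chosen corner. Your consistency mechanism (fan each hexagon from its least vertex under a global ordering) is a workable substitute for the paper's implicit one (each hexagon is a single $2$-cell of the truncated complex, so it is subdivided once and the subdivision is seen identically from both sides), though note that when a self-gluing identifies two corners of the same hexagon the ``least vertex'' no longer singles out a corner and you must still pick one.

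The genuine gap is exactly the step you defer to ``careful accounting'': with your choice of cone point, the bound of $14$ is not automatic and in degenerate cases appears to fail. If the least label occurs at only one corner of the cell, that corner is the fan apex of both hexagons containing it and meets $4+4+1=9$ boundary triangles, giving at most $11$ cone tetrahedra --- better than needed. But when face and edge identifications force the least label to occur at several corners of the cell, the particular corner you cone from need not be the fan apex of either of its hexagons; a non-apex corner of a fanned hexagon can meet as few as one of its four triangles, so the chosen corner could meet only $1+1+1=3$ boundary triangles, yielding $20-3=17>14$ tetrahedra. The paper avoids this by choosing the cone point differently: take a corner of \emph{highest valence} in the $20$-triangle boundary sphere of the (abstract) truncated cell. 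Every hexagon fan apex meets at least $4+1+1=6$ boundary triangles, so such a corner exists with valence at least $6$, and the count is at most $20-6=14$ unconditionally. Replacing ``least-labelled vertex'' by ``highest-valence corner'' (the choice of cone point for the $3$-cell is internal to the cell, so no consistency issue arises) closes your gap and completes the proof.
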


\begin{proof}
Consider an ideal triangulation of $M$.
Truncate the tetrahedra to obtain a cell structure homeomorphic to $M$.
Cone each non-triangle face, each of which has six sides, to a vertex of the face.
Note that each of these six-sided faces is now triangulated by four triangles, so the 3-cells now have 20 faces each.
In each truncated tetrahedron, pick a vertex of highest valence: it will be of valence at least six, and so neighbours at least six triangles.
Cone the tetrahedron to this vertex, giving a triangulation of this cell with one tetrahedron for each triangle that did not neighbour the vertex, the count of which is at most 14.
Thus we have realised a material triangulation of $M$ using $14\Delta'(M)$ tetrahedra.
\end{proof}

\subsection{Parallelity bundles}
\label{subsection:parallelitybundles}

The notion of a parallelity bundle was introduced by Lackenby~\cite{LacComposite} to tackle the following situation: we often know that some interesting surface in a 3-manifold $M$ has a fundamental normal representative $F$.
We might wish to cut along this surface, but naively constructing a triangulation of the complement will give us exponentially many new tetrahedra.
However, most of these tetrahedra are not interesting.
They lie in the regions between parallel normal discs and so naturally are $I$-bundles: these regions constitute the \emph{parallelity bundle}.
This notion can be expressed more naturally in the language of handle structures.

\begin{defn}
Let $\mathcal{T}$ be a triangulation (or cell structure) of a 3-manifold $M$.
The \emph{dual handle structure} $\mathcal{H}$ for $M$ is formed by taking one $(3-k)$-handle for each $k$-simplex (or cell) of $\mathcal{T}$ that is not contained in the boundary and gluing them in the corresponding way.
\label{defn:dual_handle}
\end{defn}

\begin{defn}
A 1-manifold properly embedded in a surface $S$ equipped with a handle structure is \emph{standard} if it is disjoint from the 2-handles, intersects the 0-handles in arcs, and intersects each 1-handle $D^1\times D^2$ in a collection of arcs, each of which is $D^1\times\{*\}$ for some point in $D^2$.
\end{defn}

\begin{defn}
Suppose that $S$ is a subsurface of the boundary of a 3-manifold $M$.
We say that a handle structure $\mathcal{H}$ of $M$ is a \emph{handle structure for the pair $(M, S)$} if $\del S$ is standard with respect to the induced handle structure on $\del M$.
\end{defn}

This definition is motivated by the following situation: suppose that $S$ is a normal surface with respect to a handle structure for $M$.
(See Matveev for the relevant definitions~\cite[\S3.4]{Matveev}.)
The induced handle structure on $M\backslash\backslash S$ is a handle structure for $(M\backslash\backslash S, S)$.

\begin{defn}
Suppose that $\mathcal{H}$ is a handle structure for $(M, S)$.
A handle $H$ of $\mathcal{H}$ is a \emph{parallelity handle} with respect to $S$ if we can endow it with a product structure $D^2\times D^1$ such that $H\cap S = D^2\times \del D^1$, and so that if $H'$ is another handle that intersects $H$, then $H\cap H'$ has the form $\gamma\times D^1$, where $\gamma$ is a subset of $\del D^2$.

The \emph{parallelity bundle} of $(M, S)$ is the union of the parallelity handles with respect to $S$.
\end{defn}

At this point, to understand $M\backslash\backslash S$, there are two approaches we could take.
First, we could use the algorithm of Agol, Hass and Thurston~\cite{AgolHassThurston} to efficiently determine the topological type of the parallelity bundle: that is to compute, for each component, which surface it is a bundle over, and how these components intersect the non-parallelity pieces.
This approach gives an algorithm that runs in polynomial time in the number of tetrahedra in $M$ and in the logarithm of the weight of $S$ (for details see Proposition 13 of~\cite{HarawayHoffman}), so it is efficient when $S$ is a fundamental normal surface.
However, we lose track of the inclusion of $M\backslash \backslash S$ into $M$ when we do this.
Alternatively, we can hope to make the pieces of the parallelity bundle topologically simple, and analyse them directly.
To do this, we expand our study to \emph{generalised} parallelity bundles: we extend the parallelity bundle over certain regions of $M\backslash\backslash S$ that are compatible with its fibered structure.

\begin{defn}
If $N\cong \Sigma\pmttimes I$ where $\Sigma$ is a surface, the \emph{vertical boundary} $\del_v N$ of $N$ is $\del\Sigma\pmttimes I$, and the \emph{horizontal boundary} $\del_h N$ of $N$ is $\Sigma\pmttimes \del I$.
\end{defn}

\begin{defn}[Definition 5.2~\cite{LacComposite}]
A \emph{generalised parallelity bundle} $\mathcal{B}$ in a handle structure $\mathcal{H}$ for $(M, S)$ is a subset of the handles of $\mathcal{H}$ such that:
\begin{enumerate}
	\item $\mathcal{B}$ is endowed with an $I$-bundle structure over a surface;
	\item any handle of $\mathcal{B}$ that intersects $\del_v \mathcal{B}$ is a parallelity handle, where the $I$-bundle structure on the parallelity handle agrees with that of $\mathcal{B}$;
	\item $\del_h \mathcal{B}$ is $B\cap S$;
	\item if $H$ is a handle in $\mathcal{B}$, and $H'$ is adjacent to $H$ and of higher index, then $H'$ is in $\mathcal{B}$ also.
\end{enumerate}
\end{defn}

As we would hope, the parallelity bundle is a generalised parallelity bundle~\cite[Lemma 5.3]{LacComposite}.
A \emph{maximal} generalised parallelity bundle is a generalised parallelity bundle that is not contained in any other generalised parallelity bundles.
We will see that, aside from the following situation, the parallelity bundle components have incompressible horizontal boundary in $S$.

\begin{defn}
Suppose that there are two annuli $G$ and $G'$ in $M$, with $\del G = \del G' \subset S$, such that $G$ is properly embedded and $G'$ is contained in the boundary.
Suppose that $G\cup G'$ bounds a union of handles $P$ in $M$ such that:
\begin{enumerate}
	\item either $P$ is a parallelity region with respect to $G$ and $G'$ or $P$ is contained in a 3-ball;
	\item if $H$ is a handle in $P$ and $H'$ is a handle of higher index that is adjacent to $H$, then $H'$ is also in $P$;
	\item if $H$ is a parallelity handle of $(M, S)$ intersecting $P$, then $H$ is in $P$;
	\item $G$ is a vertical boundary component of a generalised parallelity bundle lying in $P$;
	\item $G'\cap (\del M-S)$ is either empty or a regular neighbourhood of a core curve of $G'$.
\end{enumerate}
Removing the interiors of $P$ and $G'$ is an \emph{annular simplification}.
\end{defn}

An annular simplification of $(M, S)$ gives a smaller handle structure for the same 3-manifold, and the boundary of this handle structure inherits a copy of $S$.
See \S5 of~\cite{LacComposite} for further details.

Since in an annular simplification, $G$ is a vertical boundary component of a generalised parallelity bundle component, and any handle in such a component that intersects the vertical boundary must be a parallelity handle, we have the following result.

\begin{lemma}
If $\mathcal{H}$ does not contain any parallelity handles, it does not admit any annular simplifications.
\end{lemma}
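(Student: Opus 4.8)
The plan is to prove the contrapositive: any annular simplification of $\mathcal{H}$ forces the presence of a parallelity handle, so the absence of parallelity handles precludes annular simplifications. First I would unwind the definition of an annular simplification. Condition (4) there requires that the properly embedded annulus $G$ be a vertical boundary component of a generalised parallelity bundle $\mathcal{B}$ lying inside $P$. In particular, $G \subseteq \del_v\mathcal{B}$, and since $G$ is a genuine annulus, the vertical boundary $\del_v\mathcal{B}$ is nonempty.

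Next I would observe that the handles of $\mathcal{B}$ cover $\mathcal{B}$, and hence cover its vertical boundary; because $\del_v\mathcal{B}$ contains the two-dimensional, nonempty annulus $G$, at least one handle $H$ of $\mathcal{B}$ must intersect $\del_v\mathcal{B}$. I would then apply condition (2) in the definition of a generalised parallelity bundle, which states that any handle of $\mathcal{B}$ meeting $\del_v\mathcal{B}$ is a parallelity handle of $(M, S)$, with product structure agreeing with that of $\mathcal{B}$. Thus $H$ is a parallelity handle contained in $\mathcal{H}$, contradicting the hypothesis that $\mathcal{H}$ contains none. This completes the contrapositive and hence the lemma.

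The argument is essentially a direct chaining of the two definitions (annular simplification and generalised parallelity bundle), so I do not expect any substantial obstacle. The single point meriting care is the step asserting that a nonempty vertical boundary component genuinely produces a handle of $\mathcal{B}$ meeting $\del_v\mathcal{B}$; this is immediate once one notes that the handles form a cover of $\mathcal{B}$ and that $G$, being an annulus, is not degenerate. Everything else is forced by the definitions, which is why the statement can be recorded as a short lemma.
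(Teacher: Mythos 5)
Your proof is correct and is essentially the same argument the paper gives (in the sentence immediately preceding the lemma): condition (4) of an annular simplification forces $G$ to be a vertical boundary component of a generalised parallelity bundle, and condition (2) of the generalised parallelity bundle definition then forces some handle meeting $\del_v\mathcal{B}$ to be a parallelity handle. No further comment is needed.
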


\begin{theorem}[Proposition 5.6~\cite{LacComposite}]
Suppose $M$ is a compact orientable irreducible 3-manifold with a handle structure $\mathcal{H}$.
Let $S$ be an incompressible surface in $\del M$ that is not a 2-sphere such that $\del S$ is standard.
Suppose that $M$ does not admit any annular simplifications.
Let $\mathcal{B}$ be a maximal generalised parallelity bundle for $(M, S)$.
Then each component of $\mathcal{B}$ has incompressible horizontal boundary.
\label{thm:incompparallelitybundle}
\end{theorem}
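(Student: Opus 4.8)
The plan is to argue by contradiction, exploiting the two hypotheses---maximality of $\mathcal{B}$ and the absence of annular simplifications---as the two escape routes: a compression of the horizontal boundary should either let us enlarge the bundle or exhibit an annular simplification, and both are forbidden. So suppose some component $B$ of $\mathcal{B}$ has compressible horizontal boundary, witnessed by a disk $D\sub M$ with $\del D\sub\del_h B$ essential in $\del_h B$ and $\operatorname{int}(D)\cap\del_h B=\emptyset$.

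First I would put the compressing disk into standard position. Since $\del D$ lies on $S$ and bounds the disk $D$ in $M$, it is null-homotopic, so incompressibility of $S$ gives a disk $E\sub S$ with $\del E=\del D$. Any circles of $\operatorname{int}(D)\cap S$ can be removed by the usual innermost-circle surgeries: each bounds a disk in $S$ by incompressibility, and the sphere produced when we swap disks bounds a ball by irreducibility (this is where one uses that $S$ is not a $2$-sphere), so after isotopy I may assume $D\cap S=\del D$. Then $D\cup E$ is an embedded $2$-sphere, and irreducibility of $M$ furnishes a ball $W$ with $\del W=D\cup E$.

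The crux is to read off the structure of $W$ relative to the $I$-bundle $B$ and decide which contradiction it yields. Because $\del D$ is essential in $\del_h B$ yet inessential in $S$, the disk $E$ must leave $\del_h B$, crossing the seam $\del_h B\cap\del_v B$; up to choosing $E$ innermost, $\del D$ is parallel in $\del_h B$ to a boundary curve that forms the horizontal edge of a vertical-boundary annulus (or M\"obius band) $A$ of $B$. I would then split into two cases. If the handles filling $W$ carry an $I$-fibering compatible with that of $B$ along $A$, I adjoin them to $B$, check that the result still satisfies the four defining conditions of a generalised parallelity bundle---in particular that the newly included handles meeting the vertical boundary are genuine parallelity handles and that closure under higher-index adjacency is preserved---and so contradict maximality. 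Otherwise $A$ together with an annulus cut from $E$ in $\del M$ should bound $W$ as a parallelity region or a region inside a ball, and I would verify the five conditions defining an annular simplification, contradicting the hypothesis that $M$ admits none.

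I expect the main obstacle to be exactly this final dichotomy: translating the purely topological ball $W$ into one of the two combinatorial structures while respecting the handle decomposition. Verifying the generalised-parallelity-bundle axioms for the enlarged region---most delicately the compatibility of the $I$-bundle fiberings and the parallelity condition on handles touching $\del_v$---and correspondingly verifying all five annular-simplification conditions requires controlling precisely how the handles of $\mathcal{H}$ sit inside $W$. The twisted versus untwisted cases for $B$ (annulus versus M\"obius vertical boundary), and confirming that the essentiality of $\del D$ in $\del_h B$ genuinely forces a nondegenerate configuration rather than a boundary-parallel compression that collapses trivially, are the places where I would expect to spend the most care.
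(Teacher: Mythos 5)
First, a point of orientation: the paper does not prove this statement at all --- it is imported verbatim as Proposition~5.6 of Lackenby's work~\cite{LacComposite} --- so the comparison here is necessarily against that source rather than against an argument in this paper. Your overall strategy (derive a contradiction with either the maximality of $\mathcal{B}$ or the absence of annular simplifications, using incompressibility of $S$ to cap off the compressing curve in $S$ and irreducibility of $M$ to produce a ball) is indeed the strategy of Lackenby's proof, and your preliminary normalisation of the compressing disc is fine.

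There is, however, a genuine gap at exactly the point you flag as ``the main obstacle'', and it is not merely bookkeeping. Both of your escape routes are \emph{combinatorial}: a generalised parallelity bundle is by definition a subset of the \emph{handles} of $\mathcal{H}$, and an annular simplification removes a region $P$ that is a \emph{union of handles} bounded by two annuli $G$ and $G'$ with $\del G=\del G'$, where $G$ must be a vertical boundary component of a generalised parallelity bundle inside $P$. Your region $W$ is bounded by an arbitrary topological compressing disc $D$ together with a disc $E\sub S$; nothing forces $W$ to be a union of handles, so ``adjoin the handles filling $W$ to $B$'' is not well-posed, and $W$ is bounded by two discs rather than by two annuli with common boundary, so it cannot directly witness an annular simplification either. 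The missing idea is to abandon the arbitrary compressing disc and localise to the handle structure: since $\del_h B$ is compressible but lies in the incompressible $S$, some boundary curve of $\del_h B$ --- a horizontal edge of a component $A$ of $\del_v B$ --- bounds a disc in $S$ whose interior misses $\del_h B$, and one then works with the annulus $A$ itself, which \emph{is} aligned with the handle structure (it is the vertical boundary of a union of parallelity handles), capped off by discs in $S$. The sphere so obtained bounds a ball whose interior meets $\mathcal{H}$ in a union of handles, and it is for \emph{this} region that the dichotomy (extend $\mathcal{B}$ across a parallelity region, contradicting maximality, or read off an annular simplification with $G=A$) can actually be verified and shown exhaustive. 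You gesture at $A$ but never make it the frontier of your region, so as written your two cases are neither well-defined on the handle level nor shown to cover all possibilities, and the argument does not close.
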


\section{Constructing simplicial singular fibres}
\label{section:simplicialfibres}

In this section we will prove Theorem~\ref{thm:simplicialfibres}, which we now recall.

\simplicialfibres*

A related result was proved by Mijatovi\'{c}~\cite[Theorem 5.1]{MijatovicTriangulationsSFS}: he showed that a neighbourhood of each singular fibre (including those of multiplicity two) is simplicial in a suitable subdivision of $\mathcal{T}$.
His bound was that this subdivision could be obtained by making at most $2^{2^{400|\mathcal{T}|^2}}$ Pachner moves.
If our bound were phrased in this language, the number of Pachner moves would be linear in the size of the triangulation.

The result is trivial if $M$ has no singular fibres other than those of multiplicity two.
We will handle the case when $M$ is a solid torus separately.
So assume for the remainder of this section that $M$ is a Seifert fibered space that has $n$ singular fibres that are not of multiplicity two for some $n\geq 1$, and is not a solid torus.
Let $\mathcal{T}$ be a triangulation of $M$, and let $\mathcal{H}$ be the dual handle structure (as in Definition~\ref{defn:dual_handle}).

Our approach is to take a collection of normal annuli that cut out a neighbourhood of each singular fibre.
We will then prove that, for each of these singular fibres, in the induced handle structure of their solid torus neighbourhood there is a core curve consisting of a bounded number of arcs in each 0- and 1-handle that avoids the parallelity bundle altogether.

We will do this by applying a theorem that gives us a controlled core curve of the solid torus with respect to some affine data.
We will construct a handle structure $\mathcal{H}$ by taking the dual handle structure to a triangulation, cutting along normal surfaces, removing handles, and replacing parallelity bundle components with 2-handles.
Its 0-handles, which arise as pieces of tetrahedra cut along normal discs, each have a natural identification with a convex polyhedron in $\RR^3$ where their intersections with the 2- and 3-handles of $\mathcal{H}$ are unions of faces.
Viewing the 1-handles as thickened pieces of faces in the triangulation, these too have natural identifications with the product of a polygon $P$ and the interval, such that they are glued to 0-handles along $P\times \del I$ by an affine gluing map.
This is enough to upgrade $\mathcal{H}$ to an \emph{affine handle structure}, as defined in \S4 of~\cite{LacCoreCurves}, and allow us to apply the following result.

\begin{theorem}[Theorem 4.2~\cite{LacCoreCurves}]
   Let $\mathcal{H}$ be an affine handle structure of the solid torus. Suppose that each 0-handle has at most four components of intersection with  the 1-handles, and that each 1-handle has at most three components of intersection with the 2-handles. Then $M$ has a core curve that intersects only the 0- and 1-handles, that respects the product structure on the 1-handles, that intersects each 1-handle in at most 24 straight arcs, and that intersects each 0-handle in at most 48 arcs.
   Moreover, the arcs in each 0-handle are simultaneously parallel to a collection of arcs in the boundary of the corresponding polyhedron, and each component of this collection intersects each face of the polyhedron in at most six straight arcs.
   \label{thm:laccorecurves}
\end{theorem}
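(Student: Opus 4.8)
The plan is to produce the core curve in three stages: first realise a generator of $\pi_1(M)\cong\ZZ$ inside the union of the $0$- and $1$-handles, then cut down its complexity by a minimality argument, and finally use the affine structure to straighten the resulting arcs and extract the face bound.

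First I would reduce to the handlebody $V$ formed by the $0$- and $1$-handles of $\mathcal{H}$, so that $M$ is obtained from $V$ by attaching the $2$-handles (there are no $3$-handles, by Lemma~\ref{lemma:remove3cells}). Since attaching $2$-handles only adds relations, the inclusion $V\hookrightarrow M$ is surjective on $\pi_1$, and I would argue that a core curve may be taken to lie in $V$. A core curve is an embedded circle generating $\pi_1(M)$; each $2$-handle is a ball meeting $V$ along an annulus in $\del V$ with its remaining boundary free, so any arc of the curve running through a $2$-handle can be pushed out of that ball and onto the annulus. Having moved the curve into $V$, I would isotope it into normal form, so that it meets each $1$-handle $D^1\times D^2$ in product arcs $D^1\times\{*\}$ (respecting the product structure) and meets each $0$-handle in arcs joining the islands where the $1$-handles attach.

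Next, take such a core curve $c$ of minimal complexity, where complexity records the total number of arcs in which $c$ meets the handles. The key step is to bound the number of arcs per handle for this minimal $c$, and here the hypotheses enter. If $c$ met some $1$-handle in too many product arcs, two of them would be parallel, cobounding an empty band in $D^2$; an outermost such band would support a complexity-reducing isotopy, contradicting minimality. The at-most-three $2$-handle bands together with the affine straightening bound the number of genuinely distinct product arcs in each $1$-handle, giving the count of $24$. The $0$-handle bound is then bookkeeping: since each $0$-handle meets at most four islands, each contributing at most $24$ arc-ends, and each arc has two ends, it meets at most $48$ arcs. The analogous parallel-bigon argument in the $0$-handles shows this minimal configuration is realised without superfluous arcs.

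Finally I would invoke the affine structure. Viewing each $0$-handle as a convex polyhedron in $\RR^3$, the arcs of $c$ can be straightened rel their endpoints on the faces and collected into parallel families; convexity then bounds the number of straight arcs crossing any single face by six, yielding the ``moreover'' clause. The main obstacle is the middle stage: ruling out that a minimal core curve winds through a single handle many times. Because a generator of $\pi_1$ may be forced to traverse the handle structure repeatedly, the difficulty is to separate the passages that are genuinely forced — by the winding and by the at-most-three $2$-handle bands — from those that are removable, and to show the latter always admit a complexity-reducing isotopy that keeps the curve embedded and in normal form.
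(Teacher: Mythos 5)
This statement is not proved in the paper at all: it is imported verbatim as Theorem 4.2 of Lackenby's work \cite{LacCoreCurves}, so the comparison must be with Lackenby's proof there, which is a global argument that uses the affine/normal structure from the outset (roughly, one normalises the relevant meridian discs with respect to the handle structure and builds the core curve out of that combinatorics, with the constants $24$, $48$ and $6$ emerging from explicit counting against the hypotheses on handle intersections). Your middle stage, by contrast, is not an argument but a restatement of the theorem, as you candidly admit in your final paragraph. Taking a core curve of minimal complexity and cancelling ``parallel'' arcs cannot work: two parallel product arcs in a $1$-handle need not cobound a band disjoint from the rest of the curve, an outermost band isotopy can increase the arc count in adjacent $0$-handles, and --- most fundamentally --- nothing in a local bigon calculus ever invokes the hypotheses (at most four islands per $0$-handle, at most three $2$-handle strips per $1$-handle), which is precisely where the universal constants must come from. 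A minimality argument with no mechanism tying the bound to these hypotheses cannot produce ``$24$''; this is the missing idea, not a detail. (Your $0$-handle bookkeeping, $4\times 24$ endpoints giving at most $48$ arcs, is fine, but it is conditional on the $1$-handle bound, which is the whole theorem.)

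Two further steps are concretely wrong. First, a core curve is not ``an embedded circle generating $\pi_1(M)$'': an embedded generator of $\pi_1$ of a solid torus can be locally knotted and is then homotopic but not isotopic to the core; one needs the complement to be $T^2\times I$. This is harmless if you only ever isotope an honest core curve (your push off the $2$-handles is fine in that reading), but it is fatal if any step constructs the curve from $\pi_1$ data alone. Second, the claim that ``convexity then bounds the number of straight arcs crossing any single face by six'' is false as stated: a convex polygon admits arbitrarily many disjoint parallel straight arcs. In the theorem, the bound of six applies to each \emph{component} of the parallel collection in the boundary of the polyhedron, and it is an output of Lackenby's construction, not a consequence of convexity; the affine structure only guarantees that, up to affine isotopy, there are finitely many configurations of $n$ straight arcs in a polygon, which is how the paper uses it downstream.
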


This notion of ``straight'' is induced by the affine structure on the face of the polyhedron.
Up to affine isotopy, there are only finitely many configurations of $n$ straight arcs in a given polygon.

\begin{defn}
A \emph{subnormal handle structure} $\mathcal{H}$ in a given handle structure $\mathcal{K}$ is a handle structure with an embedding $\mathcal{H} \inj \mathcal{K}$ such that the image of each handle $H$ of $\mathcal{H}$ is either a handle of $\mathcal{K}$, or is a subset of a handle $K$ of $\mathcal{K}$ that is a component of the complement of some collection of normal discs in $K$.

Let $\gamma$ be a curve in a handle structure $\mathcal{K}$.
A \emph{subnormal neighbourhood} of $\gamma$ is a subnormal handle structure $\mathcal{H}$ in $\mathcal{K}$ containing $\gamma$ that is homeomorphic to a solid torus for which $\gamma$ is a core curve.
\end{defn}

Let $S$ and $S'$ be 3-dimensional submanifolds of some 3-manifold $M$, where $S$ and $S'$ may have some boundary decoration.
We say that $S$ and $S'$ are \emph{isotopic in $M$} to mean that there is a (not necessarily $\del M$-fixing) isotopy, preserving the boundary decoration, of the embedding map of $S$ in $M$ to that of $S'$.

Let $A$ be a properly-embedded vertical annulus in $M$ that, together with an annulus $A'$ in the boundary with $\del A = \del A'$, separates a vertical neighbourhood of a singular fibre from $M$.
Take a disjoint family of one such $A_i$ for each non-multiplicity-two singular fibre $\gamma_i$.
As these annuli are essential since $M$ is not a solid torus, we can isotope them to be simultaneously normal.
Write $\{S_i\}$ for the collection of subnormal neighbourhoods of singular fibres that they separate from $M$, and write $\{\mathcal{H}_i\}$ for the induced handle structures of these subnormal neighbourhoods.
As annular simplifications do not change $(S_i, \del A_i)$ up to isotopy in $M$, and reduce the number of handles, perform them until no more are possible.

\subsection{Removing the parallelity bundle}

Fix our attention on one singular fibre neighbourhood $S_i$, recalling that the singular fibre which is its core curve is not of multiplicity two.
Let $\mathcal{B}$ be a maximal generalised parallelity bundle for $\mathcal{H}_i$ with respect to $A_i$.
As $A_i$ is essential and normal in $M$, so is incompressible in $\del S_i$ and has standard boundary, by Theorem~\ref{thm:incompparallelitybundle} the horizontal boundary of $\mathcal{B}$ is an incompressible subsurface of $\del S_i$ and is contained in $A_i$ by definition.
Thus each component of $\mathcal{B}$ is an $I$-bundle over a disc, annulus, or M\"obius band; in the annulus and M\"obius band bundle cases, their horizontal boundaries form a collection of annuli in $A_i$ which have core curves that are isotopic to the core curve of $A_i$.
We will prove that M\"obius band bundles do not occur since the singular fibre $\gamma_i$ is not of multiplicity two, and then will remove annulus bundles and any disc bundles that intersect $\del S_i$ in their vertical boundary without changing $(S_i, A_i)$ up to isotopy in $M$.

\begin{lemma}
Let $B$ be an annulus bundle or M\"obius band bundle component of $\mathcal{B}$. Each component of the boundary of $\del_v B$ intersects at most one boundary component of $A_i$.
\label{lemma:verticalboundaryinessential}
\end{lemma}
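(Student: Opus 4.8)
The plan is to reduce the statement to an elementary fact about essential simple closed curves in the annulus $A_i$. First I would pin down exactly which curves the lemma is about. Since $B\cong\Sigma\pmttimes I$ with $\del_h B = B\cap A_i$, the corner $\del_v B\cap\del_h B$ equals $\del(\del_v B)=\del(\del_h B)$, so every component of the boundary of $\del_v B$ is a simple closed curve lying in $A_i$ (note this holds regardless of whether $\del_v B$ faces the interior of $S_i$ or lies in $A_i'$). I would then record the two structural facts already available: by Theorem~\ref{thm:incompparallelitybundle} the horizontal boundary $\del_h B$ is incompressible in $\del S_i$, and, as observed just before the lemma, for an annulus or M\"obius band bundle the components of $\del_h B$ are annuli whose cores are isotopic to the core of $A_i$.

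Next I would use these facts to locate each curve. Because $A_i$ is an annulus and $\del_h B$ is incompressible, no component of $\del(\del_h B)=\del(\del_v B)$ can bound a disc in $A_i$; hence each is essential, and so is isotopic to the core curve $c$ of $A_i$. Writing $A_i\cong c\times[0,1]$ with boundary components $c_1=c\times\{0\}$ and $c_2=c\times\{1\}$, each component of $\del(\del_v B)$ is, up to isotopy, a level circle $c\times\{t\}$, and the various components are pairwise disjoint essential circles, hence nested and separating $c_1$ from $c_2$.

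Finally I would read off the conclusion. A single embedded essential circle in $A_i$ either lies in the interior of $A_i$, meeting neither $c_1$ nor $c_2$, or coincides with one of $c_1,c_2$; in every case it meets at most one boundary component of $A_i$. The configuration the lemma must exclude is a single component of $\del(\del_v B)$ that runs along both $c_1$ and $c_2$, and this is impossible precisely because an embedded essential circle separates the two boundary circles of an annulus and so cannot be incident to both. In the M\"obius band case I would note that $\del\Sigma$ is a single circle, so $\del_v B$ is a single (one-sided) component whose boundary is the core-parallel boundary of the horizontal annulus, and the same dichotomy applies.

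The main obstacle I anticipate is not this final combinatorial step but justifying that the corner curves are genuinely essential and core-parallel, rather than inessential or wrapped across $\del A_i$: this rests entirely on combining the incompressibility supplied by Theorem~\ref{thm:incompparallelitybundle} with the previously established description of the horizontal boundary of annulus and M\"obius band bundles, together with a check that the twisted vertical boundary in the M\"obius band case does not manufacture a boundary component incident to both ends of $A_i$.
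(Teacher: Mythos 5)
Your setup is fine and matches the paper: the components of $\del(\del_v B)=\del(\del_h B)$ lie in $A_i$, and Theorem~\ref{thm:incompparallelitybundle} gives that the components of $\del_h B$ are annuli in $A_i$ with cores isotopic to the core of $A_i$. The gap is in your final combinatorial step. You claim that an embedded essential circle in $A_i$ ``either lies in the interior of $A_i$, meeting neither $c_1$ nor $c_2$, or coincides with one of $c_1,c_2$,'' and that because such a circle separates the two boundary circles it ``cannot be incident to both.'' Neither claim is correct: the graph $\{(\theta,f(\theta))\}$ of a continuous $f\colon S^1\to[0,1]$ attaining both $0$ and $1$ is an embedded essential circle in $S^1\times[0,1]$ that meets both boundary circles, and it still separates them, so separation does not exclude incidence with both ends. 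Being isotopic to a level circle constrains only the isotopy class, whereas the lemma is a statement about the actual incidence of the curve with $\del A_i$.

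The paper closes exactly this gap with an arc argument rather than a closed-curve argument. If a component of $\del(\del_v B)$ met both components of $\del A_i$, it would contain an embedded arc joining the two boundary circles of $A_i$; cutting $A_i$ along such a spanning arc yields a disc, so every essential closed curve in $A_i$ --- in particular the core of the relevant component of $\del_h B$ --- must cross that arc. But that core lies in the interior of its annulus and is therefore disjoint from $\del(\del_h B)=\del(\del_v B)$, a contradiction. You need this (or an equivalent) argument to finish; the dichotomy you invoke fails for embedded essential circles that are merely isotopic into the interior.
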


\begin{proof}
Note that, as $\del_h B$ is a collection of incompressible annuli and is a subset of $A_i$, the core curve of each component of $\del_h B$ is isotopic to the core curve of $A_i$.
If a component of the boundary of $\del_v B$ intersects both boundary components, the arc that intersects both components cuts $A_i$ into a disc, so intersects the core curve of some component of $\del_h B$, which is a contradiction.
\end{proof}

We will use this lemma to show that, if we remove such a parallelity bundle component and we know that this does not change $S_i$ up to isotopy in $M$, then it preserves the pair $(S_i, A_i)$ up to isotopy in $M$.

\begin{lemma}
If $B$ is a component of the generalised parallelity bundle that is a M\"obius bundle or annulus bundle, and some component of $\del_v B$ intersects the boundary of $S_i$ in its interior but is not a subset of $\del S_i$, then we can remove handles from the handle structure of $(S_i, A_i)$ such that this component is a subset of $\del S_i$ without altering the resulting submanifold up to isotopy in $M$.
\label{lemma:intersects_bdry_means_embedded}
\end{lemma}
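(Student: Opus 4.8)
The plan is to exhibit the region of $S_i$ trapped between the offending component of $\del_v B$ and $\del S_i$ as a product region, and then to push that component across it onto the boundary by an isotopy in $M$, which at the level of handles amounts to deleting the handles of that region. Concretely, write $V$ for the component of $\del_v B$ in the hypothesis, so that $\del V \subset \del_h B \subset A_i$ while the interior of $V$ meets $\del S_i$ without $V$ being contained in it. The interior intersection of $V$ with $\del S_i$ cuts off a region $P \subset S_i$ cobounded by (a sub-annulus of) $V$ and an annulus $G' \subset \del S_i$, and the goal is to show $P \cong G' \times I$ so that sliding $V$ onto $G'$ is an isotopy.

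First I would establish that $P$ carries a product structure compatible with the $I$-bundle on $B$. By the second axiom in the definition of a generalised parallelity bundle, every handle meeting $\del_v B$ is a parallelity handle whose product structure agrees with that of $\mathcal{B}$, and the fibres of these handles run transverse to $V$. Where the interior of $V$ abuts $\del S_i$, these fibres run between $V$ and $\del S_i$, and I would argue, using the maximality of $\mathcal{B}$ together with the axiom that any higher-index handle adjacent to a handle of $\mathcal{B}$ again lies in $\mathcal{B}$, that no non-parallelity handle can be trapped inside $P$: any such handle could otherwise be absorbed into the bundle, contradicting maximality, unless it is capped off by $\del S_i$. This identifies $P$ as a parallelity region fibred over $G'$, with its horizontal boundary restricting to $V$ and $G'$.

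Next I would invoke Lemma~\ref{lemma:verticalboundaryinessential} to control the global shape of the push: it guarantees that each component of the boundary of $\del_v B$ meets at most one boundary component of $A_i$, which prevents $V$ from spanning $A_i$ and ensures that the annulus $G'$ is embedded in $\del S_i$ with its intersection with $\del M$ under control. Because the relation \emph{isotopic in $M$} permits isotopies that move $\del M$, sliding $V$ across the product $P \cong G' \times I$ onto $G' \subset \del S_i$ is a legitimate isotopy of the embedding of $S_i$ in $M$, and since $\del V$ already lies in $A_i$ the decoration is preserved throughout. Discarding the handles comprising $P$ from the handle structure of $(S_i, A_i)$ then yields a smaller handle structure in which $V$ is a subset of $\del S_i$, as required. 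The M\"obius bundle case is entirely analogous: its vertical boundary has a single component, which plays the role of $V$, and $P$ is again a product region cobounded by $V$ and an annulus in $\del S_i$.

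I expect the main obstacle to be the second step: rigorously certifying that $P$ is a genuine product region rather than merely a union of parallelity handles that could, a priori, contain stray non-parallelity handles or hidden topology. The delicate point is the interplay between the local, handle-wise product data supplied by the parallelity condition and the global requirement that $P$ be an honest $I$-bundle cobounded by $V$ and $G'$; closing this gap relies on the maximality of $\mathcal{B}$ and on the incompressibility of $\del_h \mathcal{B}$ furnished by Theorem~\ref{thm:incompparallelitybundle}, which together rule out the trapped region concealing features that would obstruct the slide.
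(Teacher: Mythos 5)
There is a genuine gap, and it starts with the geometric picture. The component $V$ of $\del_v B$ in the hypothesis is an annulus whose boundary circles lie in $A_i$ and whose interior is \emph{partly} in $\del S_i$ and partly in the interior of $S_i$; the portion lying in the interior is therefore not a sub-annulus of $V$ but a union of properly embedded squares (each with two edges in $A_i$ and two in $\del S_i - A_i$), obtained in the paper by tiling $V$ by its intersections with the handles and merging tiles of like colour. Consequently there is no single region $P \cong G' \times I$ over an annulus $G'$ cobounded by $V$ and $\del S_i$; what one actually gets is one region per properly embedded square, and the paper removes these one at a time by induction. Your appeal to maximality of $\mathcal{B}$ to certify that the trapped region is a parallelity/product region also does not work: that region lies on the far side of $V$ from $B$ and part of its boundary is in $\del S_i - A_i$, whereas the horizontal boundary of any generalised parallelity bundle must lie in $A_i$, so maximality says nothing about it. The paper makes no product claim for the removed region at all --- it is just a $3$-ball.

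The more serious omission is that you never rule out the possibility that one of these properly embedded discs is a \emph{meridian} disc of the solid torus $S_i$. If it were, the disc would be non-separating, no trapped region would exist, and deleting handles ``behind'' it would change $(S_i, A_i)$ rather than isotope it. This is exactly where the standing assumption that $\gamma_i$ is not of multiplicity two enters the paper's proof: the boundary pattern of such a square forces the fibre to have multiplicity two if the disc is a meridian, so each square must instead be boundary-parallel and cut off a $3$-ball $R$. One then checks (via incompressibility of $\del_h B$, as you correctly sense at the end) that $R \cap B$ is only the disc itself, removes $R$ with the appropriate adjustment of $A_i$ according to the configurations of Figure~\ref{fig:parallelitydiscinboundary}, and inducts on the number of tiles. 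Since your argument nowhere uses the multiplicity hypothesis, it cannot be complete as written.
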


\begin{proof}
If a component $\alpha$ of $\del_v B$ is not properly embedded but is also not contained in $\del S_i$, we can decompose this annulus as follows.
Since $\del_h B$ is incompressible and not a disc, $\del_v B$ is incompressible also so as $\alpha$ is in a solid torus it is boundary-parallel.
Consider the components of its intersection with each handle. These are discs with four sides: two that are its intersection with $A_i$, and the other two that may be in the interior of the solid torus or may be in $\del S_i - A_i$.
Colour each of these squares by whether its interior is in $\del S_i$ or not.
Then consider the tiling of $\alpha$ where we merge adjacent squares of the same colour (noting that, as $\alpha$ is neither properly embedded nor contained in the boundary, the whole annulus is not of only one colour).
Let $D$ be one of these tiles of $\del_v B$ whose interior is in the interior of the solid torus.
Note that $D$ is properly embedded: two of its edges are contained in $A_i$ and two are in $\del S_i - A_i$.
This disc cannot be a meridian disc for $S_i$ as then the singular fibre would have multiplicity two, so it must be boundary-parallel -- that is, $\del D$ bounds a disc in $\del S_i$.
We thus have three possibilities for how $\del D$ can lie with respect to $(S_i, A_i)$ up to homeomorphism, shown in Figure~\ref{fig:parallelitydiscinboundary}(a)-(c).
As the two arcs of $\del D$ in $A_i$ are parts of $\del_v B$ and so do not span $A_i$, they each start and end on one component of $\del A_i$.
They may intersect the same component or two different components, and in the case when they intersect the same component, they may be nested or not.
Up to homeomorphism, the two arcs in $\del S_i-A_i$ are determined by the fact that $\del D$ bounds a disc in $\del S_i$.

Consider the 3-ball $R$ that $D$ bounds.
Note that this 3-ball intersects $B$ only in $D$: if they intersected elsewhere, then since $B$ is connected and $D$ is separating in $M$, the entirety of $B$ would be contained in a 3-ball, but then its horizontal boundary would be compressible.
We can thus remove this 3-ball without affecting $(S_i, A_i)$ up to isotopy in $M$ by replacing $(S_i, A_i)$ with $(S_i-R, A_i-R)$ in the case of Figure~\ref{fig:parallelitydiscinboundary}(a) and (b), or with $(S_i-R, (A_i-R) \cup D)$ in the case of Figure~\ref{fig:parallelitydiscinboundary}(c).
We have reduced number of squares in the tiling of $\alpha$, so by induction we can repeat this process until $\alpha$ is a subset of $\del S_i$.
\end{proof}

Note that the cell structure of $(S_i, A_i)$ after this process may not satisfy Convention~\ref{convention:handlestructure} and so may not be a handle structure.
We will correct this when we remove the parallelity bundle component $B$.

\begin{figure}[th]
  \centering
  \begin{subfigure}[b]{0.40\textwidth}
  	\centering
   	\resizebox{0.7\textwidth}{!}{
        \includegraphics[clip, trim=0cm 7cm 7cm 0cm]{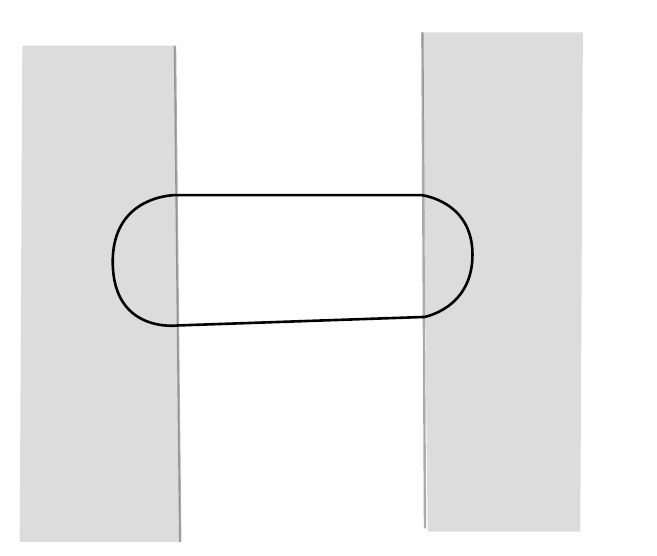}
    }
    \caption{}
  \end{subfigure}
  \begin{subfigure}[b]{0.40\textwidth}
  	\centering
   	\resizebox{0.7\textwidth}{!}{
        \includegraphics[clip, trim=7cm 7cm 0cm 0cm]{figures/discsintorus.pdf}
    }
    \caption{}
  \end{subfigure}
  \begin{subfigure}[b]{0.40\textwidth}
  	\centering
   	\resizebox{0.7\textwidth}{!}{
        \includegraphics[clip, trim=0cm 0cm 7cm 7cm]{figures/discsintorus.pdf}
    }
    \caption{}
  \end{subfigure}
  \begin{subfigure}[b]{0.40\textwidth}
  	\centering
   	\resizebox{0.7\textwidth}{!}{
        \includegraphics[clip, trim=7cm 0cm 0cm 7cm]{figures/discsintorus.pdf}
    }
    \caption{}
  \end{subfigure}
  \caption{The possible ways the boundary of a properly embedded disc in $\del_v B$ could intersect $(\del S_i, A_i)$. These diagrams show subsets of $\del S_i$. The shaded section is $A_i$. The black curve is $\del D$.}
  \label{fig:parallelitydiscinboundary}
\end{figure}

\begin{lemma}
The maximal generalised parallelity bundle has no M\"obius band bundle components.
\label{lemma:nomobiuscomponents}
\end{lemma}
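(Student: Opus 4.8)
The plan is to suppose for contradiction that some component $B$ of the maximal generalised parallelity bundle $\mathcal{B}$ is a M\"obius band bundle, and to show that this forces $\gamma_i$ to have multiplicity two. First I would record the homotopy-theoretic bookkeeping. Because the base $\mathcal{M}$ is non-orientable, the horizontal boundary $\del_h B$ is the orientation double cover of $\mathcal{M}$ and hence a single annulus; by Theorem~\ref{thm:incompparallelitybundle} it is incompressible and lies in $A_i$, so its core $c_h$ is isotopic in $\del S_i$ to the core of $A_i$, a regular fibre $f$. Writing $\gamma_i$ for the core of the solid torus $S_i$, the definition of the fibred solid torus of multiplicity $p_i$ gives $[f] = p_i[\gamma_i]$ in $\pi_1(S_i)\cong\ZZ$. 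If $c$ denotes the core of the M\"obius band, then the double covering $\del_h B \to \mathcal{M}$ shows $[c_h] = 2[c]$, so that $p_i = 2[c]$ in $\ZZ$. It therefore suffices to prove that $c$ is isotopic to $\gamma_i$, since then $[c]=\pm 1$ and $p_i = 2$.

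The heart of the argument is locating the single vertical boundary annulus $\del_v B$ (the $I$-bundle over the connected curve $\del\mathcal{M}$). Using Lemma~\ref{lemma:intersects_bdry_means_embedded}, I would first remove handles so that $\del_v B$ is either contained in $\del S_i$ or is properly embedded with its interior in the interior of $S_i$. In the first case $\del B = \del_h B \cup \del_v B \subseteq \del S_i$; as $B \subseteq S_i$ are both solid tori whose boundary tori coincide, $B = S_i$, whence $c$ is isotopic to $\gamma_i$ and $p_i = 2$ immediately.

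In the properly embedded case, note that $\del(\del_v B) = \del(\del_h B)$ consists of two parallel copies of the fibre $f$ on $\del S_i$. Since $[f]=p_i[\gamma_i]\neq 0$, $f$ is not a meridian, so any compressing disc would exhibit $f$ as bounding a disc in $S_i$; hence $\del_v B$ is incompressible and therefore boundary-parallel in the solid torus $S_i$. The product region realising this parallelism cannot be $B$ itself, as $B$ is a twisted, not a product, $I$-bundle; so it is $S_i\setminus B$, and this collar shows that $B$ is isotopic to all of $S_i$. Consequently $c$ is isotopic to $\gamma_i$, giving $[c]=\pm 1$ and $p_i = 2$ once more. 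In every case we contradict the standing assumption that $\gamma_i$ is not of multiplicity two, so no such $B$ exists.

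I expect the main obstacle to be the embeddedness dichotomy together with the boundary-parallel step: one must invoke Lemma~\ref{lemma:intersects_bdry_means_embedded} correctly to arrange $\del_v B$, verify incompressibility so that boundary-parallelism applies, and above all identify the parallelism region as the complement $S_i\setminus B$ rather than $B$ (using the twistedness of the bundle). The orientability constraint forcing $\del_h B$ to be a single annulus, and the relation $[f] = p_i[\gamma_i]$ tying the fibre to the multiplicity, should be stated carefully, since the entire contradiction is extracted from the factor of two in $[c_h] = 2[c]$.
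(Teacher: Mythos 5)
Your proof is correct, and while your Case 1 (where $\del_v B$ lies in $\del S_i$) coincides with the paper's, your treatment of the properly embedded case takes a genuinely different route. The paper isotopes $\del_v B$ to be vertical in the Seifert fibration of $S_i$ and projects to the base orbifold $D_p$, reading off the multiplicity from whether the resulting arc cuts off the cone point (a meridian disc being a bigon or a $2p$-gon); this has the advantage of setting up exactly the machinery reused for the annulus-bundle case in Lemma~\ref{lemma:removeannulusbundles}. You instead invoke the classification of incompressible annuli in a solid torus to see that $\del_v B$ is boundary-parallel, identify the product region as $S_i\setminus B$, conclude that $B$ is isotopic to all of $S_i$, and extract the contradiction from $p_i[\gamma_i]=[f]=[c_h]=2[c]$ with $[c]=\pm[\gamma_i]$ --- a more homotopy-theoretic argument that avoids the vertical isotopy and orbifold language altogether. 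One step you should tighten: the assertion that the parallelism region ``cannot be $B$ itself, as $B$ is a twisted, not a product, $I$-bundle'' is not quite the right reason, since one must rule out $(B,\del_v B)$ admitting some other product structure $\del_v B\times I$ with $\del_v B=\del_v B\times\{0\}$; the correct obstruction is that the core of $\del_v B$ (namely $\del\mathcal{M}$) winds twice around the core of the solid torus $B$, whereas in a parallelity region it would wind once --- which is precisely the computation $[c_h]=2[c]$ you have already recorded. With that point made explicit, the argument is complete.
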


\begin{proof}
Let $B$ be such a component, so $B$ has a bundle structure as the twisted product of a M\"obius band and an interval.
Consider the incompressible and hence boundary-parallel annulus that constitutes its vertical boundary, $\del_v B$.

If $\del_v B$ intersects $\del S_i$, then by Lemma~\ref{lemma:intersects_bdry_means_embedded} we can assume it is a subset of $\del S_i$.
Then, as its interior is disjoint from $A_i$ and its boundary components are subsets of $A_i$, it must be the entirety of $\del S_i - A_i$.
Now, as the boundary of $B$ is the entirety of $\del S_i$, we see that $S_i$ is the neighbourhood of a singular fibre of multiplicity 2, so we have a contradiction.

If $\del_v B$ {is} properly embedded, we can isotope $\del_v B$ to be vertical with respect to the Seifert fibration of $S_i$ that is induced from $M$.
Let $D_p$ be the orbifold that is a disc with one cone point of angle $\frac{2\pi}{p}$, recalling that $p$ is the multiplicity of this singular fibre.
The Seifert fibration of $M$ gives us an orbifold fibration $S_i \to D_p$ with $S^1$-fibres.
Now $\del_v B$ is the preimage of a properly-embedded arc under this map and $A_i$ is the preimage of a boundary arc.
The arc from $\del_v B$ has its endpoints on the arc from $A_i$, and the union of the arc from $\del_v B$ and the connected segment of the arc from $A_i$ cut out by these endpoints is the boundary of a sub-disc of $D_p$.
If this sub-disc does not contain the cone point, then the 3-manifold covering it is a solid torus whose meridian disc is bounded by one arc from $A_i$ and one from $\del_v B$: this is not a M\"obius band bundle.
If this sub-disc \emph{does} contain the cone point, then a meridian disc of the solid torus covering it is a $2p$-gon whose boundary alternates arcs from $\del_v B$ and $A_i$. To be a M\"obius band bundle, $p$ must be 2, which is a contradiction.
\end{proof}

\begin{lemma}
Let $\mathcal{H}$ be a handle structure for $(S, A)$ where $S$ is a 3-manifold and $A$ is a subsurface of its boundary.
Let $B$ be a component of a maximal generalised parallelity bundle with respect to $A$.
Then $\mathcal{H}-B$ satisfies Convention~\ref{convention:handlestructure} and so is a handle structure. 
\label{lemma:removeparallelityhandlestructure}
\end{lemma}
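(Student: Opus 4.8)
The claim is that removing a generalized parallelity bundle component $B$ from a handle structure $\mathcal{H}$ for $(S,A)$ yields something that still satisfies Convention~\ref{convention:handlestructure}. Let me think about what could go wrong and how to fix it.

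Let me recall the convention. We need: (1) each $k$-handle meets lower-index handles in exactly $\partial D^k \times D^{3-k}$ and is disjoint from other $k$-handles; (2) 1-handles and 2-handles meet compatibly with product structures.

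The key structural fact I'd want to use is condition (4) in the definition of a generalized parallelity bundle: if $H \in \mathcal{B}$ and $H'$ is adjacent to $H$ of higher index, then $H' \in \mathcal{B}$. So $B$ is "upward closed" — it contains all higher-index handles adjacent to its handles.

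**What removing $B$ does**

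When I remove $B$, the handles of $\mathcal{H} - B$ are exactly the handles not in $B$. The danger: a handle $H \notin B$ might have had part of its boundary glued to a handle in $B$, and now that gluing region is exposed — it becomes new boundary. I need to check this exposed region doesn't violate the convention.

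Let me think about this more carefully...

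The plan is to exploit the upward-closure property built into the definition of a generalised parallelity bundle --- namely condition (4), which states that if a handle $H$ lies in $\mathcal{B}$ and $H'$ is adjacent to $H$ with strictly higher index, then $H'$ also lies in $\mathcal{B}$. Since each component $B$ of a maximal generalised parallelity bundle is itself a generalised parallelity bundle, $B$ inherits this property. Taking the contrapositive, I would first establish the key index-bookkeeping fact: if a handle $H \notin B$ is adjacent to a handle $G \in B$, then $\mathrm{index}(G) \geq \mathrm{index}(H)$. Combining this with the disjointness clause of Convention~\ref{convention:handlestructure} (handles of equal index never meet, hence are never adjacent), the inequality is in fact strict, so $\mathrm{index}(G) > \mathrm{index}(H)$.

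The consequence I would draw is that removing $B$ never touches the attachment region of a surviving handle. Writing a surviving $k$-handle as $D^k\times D^{3-k}$, its attachment region $\partial D^k\times D^{3-k}$ is precisely its intersection with the lower-index handles. By the fact just established, no lower-index handle adjacent to $H$ can belong to $B$; hence every handle meeting $H$ along $\partial D^k\times D^{3-k}$ survives. Thus the attachment region of each remaining handle, together with the collection of lower-index handles it meets there, is identical in $\mathcal{H}$ and in $\mathcal{H}-B$. The deleted handles adjacent to $H$ meet it only along the complementary region $D^k\times\partial D^{3-k}$, so their removal merely exposes part of this region as new boundary without disturbing the product structure of $H$.

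With this in hand, verifying the convention is direct. For condition (1): each surviving $k$-handle still meets the lower-index handles in exactly $\partial D^k\times D^{3-k}$, by the previous paragraph, and disjointness of equal-index handles is automatically inherited, since deleting handles cannot create new intersections among the survivors. For condition (2): for a surviving 1-handle and 2-handle, their intersection is unchanged from $\mathcal{H}$ and so remains compatible with the product structures; if either of them lies in $B$ it is deleted and imposes no condition. Hence $\mathcal{H}-B$ satisfies Convention~\ref{convention:handlestructure} and is a handle structure.

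I expect the only genuinely delicate point to be the index bookkeeping in the first paragraph: one must apply condition (4) in the correct direction and invoke equal-index disjointness to upgrade the weak inequality to a strict one, so as to be certain that deletion only ever removes handles attached along the ``upper'' boundary $D^k\times\partial D^{3-k}$ of a survivor, and never along its attachment region. Everything downstream is then a routine check that the surviving handles retain their product structures and their mutual gluings.
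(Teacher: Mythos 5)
Your proposal is correct and follows essentially the same route as the paper: the paper's proof likewise observes that intersections in $\mathcal{H}-B$ are a subset of those in $\mathcal{H}$, and then uses condition (4) of the definition of a generalised parallelity bundle to conclude that any handle of $B$ adjacent to a surviving handle $H$ must have strictly higher index than $H$, so no intersection between $H$ and a lower-index handle is removed. Your explicit appeal to equal-index disjointness to make the inequality strict, and your separate check of the 1-handle/2-handle compatibility clause, are minor elaborations of the same argument.
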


\begin{proof}
As the intersections of handles in $\mathcal{H}-B$ are a subset of those in $\mathcal{H}$, it is enough to show that each $k$ handle $H$ (with product structure $D^k\times D^{3-k}$) intersects the handles of lower index in exactly $\del D^k\times D^{3-k}$.
Certainly $H$ intersects the handles of lower index in a subset of this set.
If a handle adjacent to $H$ in $\mathcal{H}$ was in $B$, then this had to have been of higher index than $H$, as otherwise $H$ would also have been in $B$.
Thus we did not remove any intersections between $H$ and lower index handles.
\end{proof}

\begin{lemma}
There is a subnormal neighbourhood $S_i'$ of $\gamma_i$ with a distinguished annulus $A_i'$ in $\del S_i'$, where the induced handle structure for $S_i'$ from $\mathcal{H}$ is a handle structure for the pair $(S_i', A_i')$, such that $(S_i', A_i')$ and $(S_i, A_i)$ are isotopic in $M$, whose maximal generalised parallelity bundle has no annulus bundle or M\"obius band bundle components.
\label{lemma:removeannulusbundles}
\end{lemma}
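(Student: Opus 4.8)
The plan is to induct on the number of handles of $\mathcal{H}_i$, at each stage removing an annulus bundle component of the maximal generalised parallelity bundle while keeping the pair isotopic in $M$. Since Lemma~\ref{lemma:nomobiuscomponents} already rules out M\"obius band components, it is enough to eliminate annulus bundles. If there are none, I take $(S_i', A_i') = (S_i, A_i)$ and am done; otherwise I fix an annulus bundle component $B$ and produce a subnormal neighbourhood of $\gamma_i$ in $\mathcal{H}$, with distinguished annulus isotopic to $A_i$, whose handle structure has strictly fewer handles. Because the new pair again satisfies the hypotheses of this section (it is a subnormal neighbourhood of a singular fibre of multiplicity not two, cut off by a normal annulus), all of the earlier lemmas --- in particular Lemma~\ref{lemma:nomobiuscomponents} --- apply to it, and strict decrease of the handle count guarantees the induction terminates.

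First I would normalise $B$. Applying Lemma~\ref{lemma:intersects_bdry_means_embedded} to each of the two annuli of $\del_v B$, I may assume every component of $\del_v B$ is either properly embedded in $S_i$ or contained in $\del S_i$. Recall that each component of $\del_h B = B\cap A_i$ is an annulus in $A_i$ whose core is isotopic to the core of $A_i$; since $\del_h B$ consists of two copies of the base annulus, both lying in $A_i$ in this way, the core curve of the solid torus $B$ is itself isotopic to the core of $A_i$, which is a regular fibre. Hence $B$ can be isotoped to be vertical with respect to the induced Seifert fibration of $S_i$. Crucially, under the orbifold fibration $\pi\colon S_i\to D_p$ its image $R=\pi(B)$ must avoid the cone point: otherwise the core of $B$ would be isotopic to the singular fibre $\gamma_i$ rather than to a regular fibre. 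In particular $\gamma_i=\pi^{-1}(\text{cone point})$ is disjoint from $B$.

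With $B$ vertical the configuration projects faithfully to $D_p$: the distinguished annulus $A_i$ maps to a boundary arc $a$, the two annuli of $\del_h B$ to two disjoint sub-arcs of $a$, and the two annuli of $\del_v B$ to two arcs, each interior when the corresponding vertical annulus is properly embedded and a boundary arc when it lies in $\del S_i$. As $R$ is a disc meeting $\del D_p$ only in the two sub-arcs of $a$ and avoiding the cone point, the configuration in which both images of $\del_v B$ lie in $\del D_p$ cannot occur (it would force $R=D_p$, which contains the cone point). In the remaining cases I would isotope $a$ across $R$ and lift this to an isotopy of $A_i$ across $B$, supported near $B$ and hence taking place in $M$; since $\gamma_i$ is disjoint from $B$, the ambient region stays a solid torus neighbourhood of $\gamma_i$ and the new pair is isotopic to $(S_i, A_i)$ in $M$. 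Concretely this removes exactly the handles of $B$, and by Lemma~\ref{lemma:removeparallelityhandlestructure} what remains is a genuine (still subnormal) handle structure with fewer handles. The same removal applies verbatim to any disc bundle component whose vertical boundary meets $\del S_i$, as promised in the outline above.

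The main obstacle is this geometric removal step: confirming that $B$ can be made vertical with cone-point-free image, and then checking, across each configuration of $\del_v B$ allowed by Lemma~\ref{lemma:intersects_bdry_means_embedded}, that sweeping $A_i$ across $B$ is an honest isotopy of the pair $(S_i, A_i)$ in $M$ (and not merely in $S_i$) that strictly lowers the handle count. Once this is in hand, termination is immediate from the monotone handle count, and since Lemma~\ref{lemma:nomobiuscomponents} re-applies at every stage we never reintroduce M\"obius components; the process therefore ends with a pair $(S_i', A_i')$ whose maximal generalised parallelity bundle contains no annulus bundle or M\"obius band bundle components.
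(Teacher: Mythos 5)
Your overall strategy is the same as the paper's: normalise $\del_v B$ using Lemma~\ref{lemma:intersects_bdry_means_embedded} so that each component is properly embedded or lies in $\del S_i$, make everything vertical, project to the orbifold $D_p$, remove the annulus bundle, and induct on the handle count, invoking Lemma~\ref{lemma:nomobiuscomponents} at each stage and Lemma~\ref{lemma:removeparallelityhandlestructure} to see that what remains is a handle structure. Your explicit argument that $\pi(B)$ misses the cone point (the core of the $I$-bundle over the annulus is isotopic to the core of $\del_h B$, hence to a regular fibre, which cannot happen if $B$ is a fibred neighbourhood of a singular fibre of multiplicity $p>2$) is correct and is a useful point that the paper leaves implicit.

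However, there is a genuine gap in the removal step. When both components of $\del_v B$ are properly embedded, the two arcs $\pi(\del_v B)$ cut $D_p$ into three pieces: the square $R=\pi(B)$ and two complementary discs $E_1$ and $E_2$, exactly one of which contains the cone point. Both $E_1$ and $E_2$ are nonempty, so both $\pi^{-1}(E_1)$ and $\pi^{-1}(E_2)$ are nonempty unions of handles. Your claim that the operation ``removes exactly the handles of $B$'' is therefore false in this case: deleting only the handles of $B$ disconnects $S_i$ into the two solid tori $\pi^{-1}(E_1)$ and $\pi^{-1}(E_2)$, which is not a subnormal neighbourhood of $\gamma_i$. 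One must also discard the complementary solid torus on the far side of $B$ from the cone point, i.e.\ cut along the component of $\del_v B$ innermost with respect to the cone point and throw away the parallelity bundle together with that solid torus; correspondingly, the new distinguished annulus is $A_i\cap S_i'$ or $(A_i\cup \del_v B)\cap S_i'$ depending on which side of $B$ carries $\del S_i - A_i$. Relatedly, ``isotope $A_i$ across $B$'' is not by itself a handle-removal operation: an ambient isotopy of the pair moves everything and deletes nothing, so you must specify exactly which handles are discarded and verify that what remains is connected and carries the correct distinguished annulus. You flag this as the main obstacle but then assert the conclusion rather than carrying out the case analysis; since this is precisely the content of the lemma beyond what Lemmas~\ref{lemma:nomobiuscomponents} and~\ref{lemma:intersects_bdry_means_embedded} already give, the proof is incomplete as written.
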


\begin{proof}
We know that any subnormal handle structure in $M$ which is isotopic to $(S_i, A_i)$ in $M$ has no M\"obius band bundle components by Lemma~\ref{lemma:nomobiuscomponents}.
Suppose that $B$ is some annulus bundle component of the parallelity bundle of $(S_i, A_i)$.
If the interior of $\del_v B$ intersects $\del S_i$, then by Lemma~\ref{lemma:intersects_bdry_means_embedded} we can remove handles from $(S_i, A_i)$ without changing the resulting submanifold up to isotopy in $M$.
By induction, then, we can assume that if the interior of $\del_v B$ intersects $\del S_i$, then one of the components $\alpha$ of $\del_v B$ is contained in $\del S_i$.
As in the M\"obius bundle case, this component is the entirety of $\del S_i - A_i$.
The other component $\beta$ of $\del_v B$ must be properly embedded, as it is disjoint from $\alpha$ and its interior is disjoint from $A_i$.
We thus have one properly embedded component of $\del_v B$, and the other is either properly embedded or a subset of the boundary.

Again as in the M\"obius bundle case, we can isotope both $A_i$ and each component of $\del_v B$ to be vertical in the Seifert fibration; that is, so that they are preimages of arcs in $D_p$ under the projection $S_i\to D_p$.
The two components of $\del_v B$ project to parallel arcs.
The annulus $A_i$ projects to an arc in the boundary of $D_p$.

If a component of $\del_v B$ is contained in the boundary, cut along the other component.
We have two resulting pieces: the parallelity bundle, which we discard, and a piece $S_i'$ where $(S_i', A_i\cap S_i')$ is isotopic to $(S_i, A_i)$ in $M$.

Otherwise, suppose both components are properly embedded.
In $D_p$, one of the two parallel arcs from $\del_v B$ bounds a disc together with a segment of $A_i$ that does not contain the other arc of $\del_v B$.
This disc may or may not contain the cone point.
If it does contain the cone point, the singular fibre does not run through the parallelity bundle.
Cutting along the innermost arc (with respect to the cone point) has the same effect as in the previous case, except now one piece of the complement is the parallelity bundle glued to a solid torus along an annulus.
In particular, the other piece, $S_i'$, has the same core curve (when included into $M$) as $S_i$, and $(S_i', A_i\cap S_i')$ is isotopic to $(S_i, A_i)$ in $M$.
If the disc does not contain the cone point, we cut along the innermost arc with respect to the cone point, removing the parallelity bundle glued to a solid torus, and once again can see that the other piece, $S_i'$, has the same core curve as $S_i$, and $(S_i', (A_i\cup \del_v B)\cap S_i')$ is isotopic to $(S_i, A_i)$ in $M$, so we set the distinguished annulus in $S_i'$ to be $(A_i \cup \del_v B)\cap S_i'$.

As the number of handles is reduced by this operation, we can recompute the parallelity bundle and repeat it until there are no annulus bundle components of $\mathcal{B}$ remaining.
Note that after performing these modifications we have a handle structure by Lemma~\ref{lemma:removeparallelityhandlestructure}.
\end{proof}

All remaining components of the generalised parallelity bundle $\mathcal{B}$ in $(S_i, A_i)$ are disc bundles.

\begin{lemma}
If $B$ is a disc bundle component of the maximal generalised parallelity bundle such that its vertical boundary intersects $\del S_i$ in its interior, then there is a properly embedded disc, which is a subset of $\del_v B$, such that cutting along this disc separates a 3-ball from $S_i$ that contains $B$ and does not change $(S_i, A_i)$ up to isotopy in $M$. 
\label{lemma:remove_disc_bundles}
\end{lemma}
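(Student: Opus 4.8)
The plan is to reuse the tiling argument of Lemma~\ref{lemma:intersects_bdry_means_embedded} to extract a properly embedded, boundary-parallel disc inside $\del_v B$, and then to exploit the fact that a disc bundle has \emph{compressible} horizontal boundary to show that the ball this disc bounds contains all of $B$. To set up, note that since $B$ is a disc bundle it is a $3$-ball $D^2\times I$: its horizontal boundary $\del_h B$ consists of two discs in $A_i$, and its vertical boundary $\del_v B$ is an annulus meeting $A_i$ only along the two circles $\del(\del_h B)$. As the interior of $\del_v B$ is disjoint from $A_i$, the hypothesis that $\del_v B$ meets $\del S_i$ in its interior says exactly that part of the interior of this annulus lies in $\del S_i - A_i$ while the rest lies in the interior of $S_i$.

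Next I would decompose $\del_v B$ into the squares in which it meets the handles of $\mathcal{H}_i$, colour each square by whether its interior lies in $\del S_i$ or in the interior of $S_i$, and merge adjacent squares of the same colour, exactly as in Lemma~\ref{lemma:intersects_bdry_means_embedded}. By hypothesis there is at least one tile $D$ whose interior lies in the interior of $S_i$; being a union of merged squares spanning the annulus, it is a properly embedded disc with two boundary arcs in $A_i$ (on the two boundary circles of $\del_v B$) and two in $\del S_i - A_i$. As in that lemma, $D$ cannot be a meridian disc of the solid torus $S_i$, for this would force $\gamma_i$ to have multiplicity two, contrary to our standing assumption. Hence $\del D$ is inessential in $\del S_i$, so $D$ is boundary-parallel and bounds a $3$-ball $R$ with $\del R = D\cup E$ for a disc $E\subset \del S_i$.

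The genuinely new step is to show that $R$ contains all of $B$, rather than meeting it only in $D$. In the annulus- and M\"obius-band-bundle cases of Lemma~\ref{lemma:intersects_bdry_means_embedded}, incompressibility of $\del_h B$ forced $R\cap B = D$; here $\del_h B$ is a pair of discs and so is compressible, and this obstruction vanishes. When $\del_v B$ meets the interior of $S_i$ in a single tile, that tile is the whole of $\del B$ lying off $\del S_i$, so $B$ is a $3$-ball whose entire boundary apart from $D$ lies on $\del S_i$; thus $R=B$, and in particular $B\subseteq R$. In general I would induct on the number of interior tiles: by Lemma~\ref{lemma:intersects_bdry_means_embedded} each tile other than the outermost bounds a ball meeting $B$ only in that tile, which can be removed while preserving $(S_i, A_i)$ up to isotopy in $M$, reducing to the single-tile case.

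Finally, removing $R$ is an isotopy of the pair $(S_i, A_i)$ in $M$: as in Lemma~\ref{lemma:intersects_bdry_means_embedded}, the boundary-parallel disc $E$ realises one of the configurations of Figure~\ref{fig:parallelitydiscinboundary}, determining how to replace $(S_i, A_i)$ by $(S_i - R,\,\cdot\,)$, and the resulting cell structure is repaired to a genuine handle structure via Lemma~\ref{lemma:removeparallelityhandlestructure}. The main obstacle I anticipate is the side argument of the third paragraph --- verifying that $B$ genuinely lies on the $R$-side of $D$ and that the induction terminates with a single embedded disc of $\del_v B$ cutting off a ball containing the whole of $B$. This is precisely where the disc-bundle hypothesis does the essential work, since the compressibility of $\del_h B$, unavailable in the annulus and M\"obius cases, is exactly what permits $B$ to sit inside a ball.
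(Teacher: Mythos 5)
Your setup follows the paper's: you tile $\del_v B$ into squares by handle, merge by colour, use $p\neq 2$ to see that each properly embedded tile is boundary-parallel, and invoke the configurations of Figure~\ref{fig:parallelitydiscinboundary} to remove the resulting ball; your single-tile base case (where $\del B - D$ is a disc in the torus $\del S_i$, forcing $B=R$) is also fine. But there are two gaps. The smaller one: you assert at the outset that part of the interior of $\del_v B$ lies in the interior of $S_i$, whereas the hypothesis only says $\del_v B$ meets $\del S_i$ in its interior. You must separately rule out $\del_v B\subseteq\del S_i$ (the paper does: the boundary curves of $\del_v B$ bound the discs $\del_h B$ in $A_i$, so $\del_v B$ cannot sit inside $\overline{\del S_i - A_i}$), since otherwise there is no properly embedded tile at all.

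The serious gap is the inductive step. You claim that each non-outermost properly embedded tile bounds a ball meeting $B$ only in that tile ``by Lemma~\ref{lemma:intersects_bdry_means_embedded}''. But the proof of the corresponding claim inside that lemma is precisely the incompressibility argument (``if $B$ were contained in a 3-ball, its horizontal boundary would be compressible'') which, as you yourself observe, is unavailable for disc bundles. So the one statement your induction needs is the one statement that lemma cannot deliver here; as written the appeal is circular, and no alternative justification is given. The paper instead argues by contradiction using the \emph{outermost} tile $D$ directly: if $B$ were not contained in the ball $R$ that $D$ cuts off, then $B\cap R = D$, and outermostness forces every properly embedded tile's ball to meet $B$ only in that tile; pushing each such tile across its ball isotopes $\del_v B$ into $\del S_i$ disjointly from $\del_h B$, and capping this annulus with the two discs $\del_h B$ would embed the sphere $\del B$ in the torus $\del S_i$ --- impossible. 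You need either this argument or an explicit proof that a non-outermost tile cannot have $B$ on its ball side (e.g.\ because an adjacent boundary tile contains arcs of an outermost curve, which cannot lie in the smaller disc); without one, the reduction to the single-tile case does not go through.
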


\begin{proof}
Suppose $\del_v B$ intersects $\del S_i$ in its interior.
Note that the annulus $\del_v B$ cannot be a subsurface of $\del S_i$: if it were, then as it intersects $A_i$ in its boundary only, it would have to have the same boundary curves as $A_i$, but boundary curves of $A_i$ do not bound discs (i.e. candidates for $\del_h B$) in $A_i$.
Thus, as in the proof of Lemma~\ref{lemma:intersects_bdry_means_embedded}, we can divide $\del_v B$ into squares according to whether a region of its interior is in $\del S_i$ or in the interior of $S_i$.
Again as in the previous case, since $p\not=2$, each of these squares is boundary-parallel.

Unlike in the previous case, the boundaries of the properly embedded squares may be nested in $\del S_i$.
Consider a square $D$ corresponding to an outermost such curve in $\del S_i$.
Suppose that $B$ is not contained in the ball $D$ bounds.
In this case, $B$ intersects this ball only in the square $D$, and as $\del D$ was an outermost curve, $B$ intersects the ball bounded by each properly-embedded disc in $\del_v B$ in only the properly-embedded disc itself.
We can thus see that $\del_v B$ is isotopic in $S_i$ to an annulus in $\del S_i$ that is disjoint from the rest of $B$, and in particular from $\del_h B$ (though possibly not from $A_i$).
But then both boundary curves of this annulus bound discs in $\del S_i$, which is impossible.

Cutting along $D$ removes a 3-ball $R$ from $S_i$ which contains the parallelity bundle component $B$.
There are four possible ways, up to homeomorphism, for $\del D$ to lie in $(\del S_i, A_i)$, which are the four configurations shown in Figure~\ref{fig:parallelitydiscinboundary}.
Replace $(S_i, A_i)$ with $(S_i-R, A_i-R)$ in the case of Figure~\ref{fig:parallelitydiscinboundary}(a) or (b), and with $(S_i-R, (A_i-R) \cup D)$ in the case of Figure~\ref{fig:parallelitydiscinboundary}(c) or (d).
This does not change $(S_i, A_i)$ up to isotopy in $M$.
\end{proof}

There may be parallelity bundle components remaining that we cannot remove without changing $(S_i, A_i)$ up to isotopy in $M$.
We will replace them with 2-handles to obtain a handle structure for $(S_i, A_i)$ where the 0- and 1-handles and the attaching annuli of the 2-handles naturally include into $M$ in a particularly nice way: this handle structure is \emph{$(0,0)$-nicely embedded subnormal}, which is a special case of being $(k,l)$-nicely embedded in the sense of Lackenby and Schleimer~\cite{LacSchleimerElliptic}.

\begin{defn}
Let $N$ be a manifold (piecewise linearly) embedded in a 3-manifold $M$, where $\mathcal{G}$ and $\mathcal{H}$ are handle structures for $N$ and $M$ respectively.
Let $\mathcal{G}'$ be the handle structure of just the 0- and 1-handles of $N$, and let $A\sub \del\mathcal{G}'$ be the attaching annuli of the 2-handles.
Then $\mathcal{G}$ is \emph{$(0,0)$-nicely embedded subnormal} in $\mathcal{H}$ if the following conditions hold:
\begin{itemize}
	\item each $k$-handle of $\mathcal{G}'$ is a subnormal subset of a $k$-handle in $\mathcal{H}$,
	\item for each 1-handle of $\mathcal{H}$, there is a product structure such that the product structures of all of the 1-handles of $\mathcal{G}'$ that lie in it are compatible with this product structure,
	\item in any $k$-handle of $\mathcal{H}$, none of the $k$-handles of $\mathcal{G}'$ lie between parallel normal discs in $\mathcal{H}$,
	\item the intersection of $A$ and any handle $G$ of $\mathcal{G}'$ is a union of components of intersection between $G$ and handles of $\mathcal{H}$.
\end{itemize}
\label{defn:nicelyembedded}
\end{defn}

\begin{corr}
By removing handles of $\mathcal{H}_i$ and replacing the maximal parallelity bundle with 2-handles, we can obtain a handle structure $\mathcal{G}$ for the pair $(S_i', A_i')$ that is $(0,0)$-nicely embedded subnormal in $\mathcal{H}$ and is isotopic to $(S_i, A_i)$ in $M$.
In particular, it has no parallelity 0- or 1-handles.
\label{corr:nicehandlestructure}
\end{corr}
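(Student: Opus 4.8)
The plan is to assemble the preceding sequence of lemmas into the desired handle structure, carrying out the removals in decreasing order of how much a parallelity component touches the boundary. First I would invoke Lemma~\ref{lemma:removeannulusbundles} to pass from $(S_i, A_i)$ to a pair $(S_i', A_i')$, isotopic to it in $M$, whose maximal generalised parallelity bundle $\mathcal{B}$ has no annulus bundle or M\"obius band bundle components; combined with Lemma~\ref{lemma:nomobiuscomponents}, every remaining component of $\mathcal{B}$ is then a disc bundle. Throughout, Lemma~\ref{lemma:removeparallelityhandlestructure} guarantees that deleting a parallelity component leaves a genuine handle structure satisfying Convention~\ref{convention:handlestructure}.

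Next I would dispose of the disc bundle components whose vertical boundary meets $\del S_i'$ in its interior. For each such $B$, Lemma~\ref{lemma:remove_disc_bundles} produces a properly embedded disc in $\del_v B$ that cuts off a $3$-ball containing $B$ without changing $(S_i', A_i')$ up to isotopy in $M$; deleting this ball removes $B$. Since each step reduces the handle count, I would recompute the parallelity bundle and iterate until no such component survives. The components of $\mathcal{B}$ that then remain are disc bundles with properly embedded vertical boundary, and these are exactly the ones we cannot discard without altering the isotopy class. Each such $B \cong D^2 \times I$ is topologically a $2$-handle, attached along the annulus $\del_v B$ with free faces $\del_h B \subset A_i'$, so I would coalesce the handles of each remaining component into a single $2$-handle glued along $\del_v B$. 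Call the result $\mathcal{G}$.

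It then remains to check the four conditions of Definition~\ref{defn:nicelyembedded} for $\mathcal{G}$, where $\mathcal{G}'$ denotes its $0$- and $1$-handles and $A$ the attaching annuli $\bigcup \del_v B$ of the new $2$-handles. The $0$- and $1$-handles of $\mathcal{G}'$ are unaltered pieces of $\mathcal{H}_i$, which was obtained by cutting the dual handle structure $\mathcal{H}$ along the normal annuli; hence each is a subnormal subset of a handle of $\mathcal{H}$ of the same index and inherits a product structure compatible with that of the ambient $1$-handle of $\mathcal{H}$, giving the first two conditions. For the third, the (non-generalised) parallelity bundle is contained in $\mathcal{B}$, so every parallelity handle has been absorbed into a $2$-handle; consequently $\mathcal{G}'$ contains no parallelity $0$- or $1$-handle, which is precisely the statement that none of its handles lie between parallel normal discs, and also yields the final ``in particular'' clause. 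Finally, each component of $\del_v B$ is built from intersections of the form $\gamma \times D^1$ of parallelity handles with their neighbours, which are whole components of intersection in $\mathcal{H}$, giving the fourth condition.

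The main obstacle is this last, bookkeeping-heavy condition: verifying that the attaching annuli $A$ meet each $0$- and $1$-handle of $\mathcal{G}'$ in unions of \emph{whole} components of intersection, and correspondingly that the coalesced $2$-handles glue compatibly with the product structures on the $1$-handles of $\mathcal{H}$. This hinges on the defining property of a generalised parallelity bundle that any handle meeting $\del_v \mathcal{B}$ is a parallelity handle whose product structure agrees with that of $\mathcal{B}$, so that $\del_v B$ is a union of faces $\gamma \times D^1$ rather than arbitrary subsets. Care is also needed to confirm that neither the iterated removals nor the final $2$-handle replacement disturbs the isotopy class of $(S_i', A_i')$ in $M$.
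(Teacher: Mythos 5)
Your proposal is correct and follows essentially the same route as the paper: apply Lemma~\ref{lemma:removeannulusbundles} and Lemma~\ref{lemma:remove_disc_bundles} to eliminate all components except disc bundles with properly embedded vertical boundary, replace those with 2-handles, and verify Convention~\ref{convention:handlestructure} and the conditions of Definition~\ref{defn:nicelyembedded} using the fact that any handle adjacent to $\mathcal{B}$ via a lower-index parallelity handle is itself in $\mathcal{B}$. Your treatment of the fourth condition of $(0,0)$-nice embeddedness is in fact slightly more explicit than the paper's, which simply asserts it is immediate.
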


\begin{proof}
If there is annulus bundle, apply Lemma~\ref{lemma:removeannulusbundles} to remove it; if there is a disc bundle whose vertical boundary intersects $\del S_i$, apply Lemma~\ref{lemma:remove_disc_bundles} to remove annulus bundles and disc bundles whose vertical boundaries intersect $\del S_i$.
The resulting handle structure satisfies Convention~\ref{convention:handlestructure} by Lemma~\ref{lemma:removeparallelityhandlestructure}.
The remaining generalised parallelity bundle is a collection of disc bundles with properly-embedded vertical boundary.

Replace these parallelity bundle components with 2-handles and let $\mathcal{H}_i'$ be the resulting collection of handles.
So long as this is a handle structure, it is immediate that it satisfies the definition of being $(0,0)$-nicely embedded subnormal in $\mathcal{H}$.
As any handle that is adjacent to the maximal parallelity bundle $\mathcal{B}$ via a parallelity handle of lower index is itself in $\mathcal{B}$, and the vertical boundary of $\mathcal{B}$ is properly embedded, these new 2-handles $D^2\times D^1$ intersect handles of lower index in all of $\del D^2 \times D^1$.
They are also disjoint from other 2-handles, and any components of their intersection with 1-handles come from components of intersection of 2-handles and 1-handles in $\mathcal{H}_i'$, which is a handle structure, so $\mathcal{H}_i'$ is a handle structure satisfying Convention~\ref{convention:handlestructure}.
\end{proof}

\subsection{Proof of Theorem~\ref{thm:simplicialfibres}}

\begin{prop}
    For each subnormal neighbourhood $S_i$ of a non-multiplicity-two singular fibre with handle structure $\mathcal{H}_i$, we can find a core curve satisfying the conditions in Theorem~\ref{thm:laccorecurves} that is disjoint from the parallelity bundle of $S_i$ with respect to the annulus $A_i$.
    \label{prop:avoidparallelity}
\end{prop}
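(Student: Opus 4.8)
The plan is to recast the problem as a direct application of Theorem~\ref{thm:laccorecurves} to the solid torus $S_i'$. First I would invoke Corollary~\ref{corr:nicehandlestructure} to replace the induced handle structure $\mathcal{H}_i$ by a handle structure $\mathcal{G}$ for a pair $(S_i', A_i')$ that is isotopic to $(S_i, A_i)$ in $M$, is $(0,0)$-nicely embedded subnormal in $\mathcal{H}$, and whose maximal generalised parallelity bundle has been converted into 2-handles, so that $\mathcal{G}$ has no parallelity 0- or 1-handles. The manifold $S_i'$ is a solid torus with $\gamma_i$ as a core curve. Because each 0-handle of $\mathcal{G}$ is a subnormal piece of a tetrahedron, identified with a convex polyhedron, and each 1-handle is a thickened piece of a face glued in by affine maps, $\mathcal{G}$ carries the affine handle structure described before the statement of Theorem~\ref{thm:laccorecurves}. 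It therefore remains to verify the two combinatorial hypotheses of that theorem.

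The core of the argument is this verification. I would show that each 0-handle of $\mathcal{G}$ has at most four components of intersection with the 1-handles, and each 1-handle at most three with the 2-handles. For the first, a 0-handle of $\mathcal{G}$ is a component of the complement of the normal discs in the 0-handle of $\mathcal{H}$ dual to a single tetrahedron $T$; since $T$ has four faces and the 1-handles are dual to faces, it suffices to check that such a complementary piece meets the dual 1-handle of each face of $T$ in at most one component. This is a case analysis over the complementary pieces of a tetrahedron cut along normal triangles and quadrilaterals: the corner pieces meet the three faces adjacent to their vertex once each, and the central piece meets all four faces once each, with the nicely-embedded condition ruling out product slabs that could contribute extra components. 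The bound of three for the 1-handles is the two-dimensional analogue, using that a face is a triangle with three edges and that each complementary piece of the face meets the 2-handle dual to each edge in at most one component. I expect this combinatorial case analysis, covering all admissible configurations of normal discs and arcs, to be the main obstacle, as it is where the precise constants four and three must be pinned down.

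With both hypotheses in hand, Theorem~\ref{thm:laccorecurves} yields a core curve of $S_i'$ that meets only the 0- and 1-handles of $\mathcal{G}$, crossing each 1-handle in at most 24 straight arcs and each 0-handle in at most 48 arcs, with the stated control on the arcs within each polyhedral face. Since the entire parallelity bundle of $S_i'$ now lies in the 2-handles and no parallelity 0- or 1-handles survive, a curve confined to the 0- and 1-handles is automatically disjoint from the parallelity bundle. Transporting this curve back through the isotopy from $(S_i', A_i')$ to $(S_i, A_i)$ supplied by Corollary~\ref{corr:nicehandlestructure} then produces the required core curve of $S_i$, disjoint from the parallelity bundle of $(S_i, A_i)$, which completes the proof.
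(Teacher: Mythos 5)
Your proposal is essentially the paper's own proof: invoke Corollary~\ref{corr:nicehandlestructure}, check the intersection-count hypotheses of Theorem~\ref{thm:laccorecurves} (the paper simply observes these hold because the 0- and 1-handles come from cutting a handle structure dual to a triangulation along a normal surface, and separately checks the new 2-handles, rather than redoing the tetrahedron case analysis), and conclude that a core curve confined to the 0- and 1-handles misses the parallelity bundle. The one refinement to make is in your last step: rather than ``transporting the curve back through the isotopy'' (which could destroy both the handle-intersection bounds and disjointness from the parallelity bundle), note as the paper does that the 0- and 1-handles of $\mathcal{G}_i$ \emph{are} 0- and 1-handles of $\mathcal{H}_i$, so the core curve of $S_i'$ already includes into $\mathcal{H}_i$ as a core curve of $S_i$ with all the stated properties.
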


\begin{proof}
Let $\mathcal{G}_i$ be the handle structure from Corollary~\ref{corr:nicehandlestructure}.
Recall that $\mathcal{G}_i$ has no parallelity handles, is $(0,0)$-nicely embedded subnormal in $M$, its 0- and 1-handles are also 0- and 1-handles of $\mathcal{H}_i$, and $\mathcal{G}_i$ is a solid torus neighbourhood of $\gamma_i$.
A core curve of $\mathcal{G}_i$ thus includes into $\mathcal{H}_i$ as a core curve.
Note that the intersection count conditions in Theorem~\ref{thm:laccorecurves} were satisfied by $\mathcal{H}_i$ as it came from cutting along a normal surface in a handle structure dual to a triangulation.
These conditions are also satisfied in $\mathcal{G}_i$: its 0- and 1-handles are 0- and 1-handles of $\mathcal{H}_i$ respectively, so it suffices to check the intersection conditions on the 2-handles. 
After replacing generalised parallelity bundle components with 2-handles, all intersections between 1-handles and these new 2-handles include into $\mathcal{H}_i$ as components of intersection between the 1-handles and 2-handles.
Thus Theorem~\ref{thm:laccorecurves} applies; that is, there is a core curve $\gamma$ for $S_i$ that, in $\mathcal{G}_i$, has the properties from Theorem~\ref{thm:laccorecurves}, and so in particular is disjoint from the 2-handles in $\mathcal{G}_i$ and hence from the parallelity bundle of $\mathcal{H}_i$.
\end{proof}

\begin{proof}[Proof of Theorem~\ref{thm:simplicialfibres}]
When $M$ is the solid torus, it is already known that there is a core curve of $M$ which is simplicial in $\mathcal{T}^{(51)}$, such that the edges of this simplicial curve are not subsets of edges of $\mathcal{T}^{(50)}$~\cite[Theorem 6.14]{LacSchleimerElliptic}.

Otherwise, recall that $\mathcal{H}$ is dual to the triangulation $\mathcal{T}$ of $M$.
Consider the collection of core curves $\gamma = \{\gamma_i\}$ from Proposition~\ref{prop:avoidparallelity}: this collection consists of one curve isotopic to each non-multiplicity-two singular fibre.
These core curves are contained in the 0- and 1-handles in the handle structures $\mathcal{G}_i$, which are simultaneously $(0,0)$-nicely embedded subnormal in $\mathcal{H}$.
By Theorem~7.7 in~\cite{LacSchleimerElliptic}, the curves $\{\gamma_i\}$ are therefore simultaneously simplicial in $\mathcal{T}^{(79)}$.
Furthermore, the final step of the proof of Theorem~7.7 in~\cite{LacSchleimerElliptic} is to consider some simplicial arcs in a face of $\mathcal{T}^{(77)}$ and to take two barycentric subdivisions to obtain isotopic simplicial arcs that are pushed off this face.
After this step, we see that the pushed off arcs contain only edges of $\mathcal{T}^{(79)}$, and none of $\mathcal{T}^{(78)}$.
In the solid torus case we had this in the setting of $\mathcal{T}^{(51)}$, so we can take 27 further barycentric subdivisions, each time, for each edge of the simplicial core curve, replacing it with two edges which were created in the barycentric subdivision.

Note that, in a barycentric subdivision, the link of an edge $e$ that was created in the subdivision is a circle of four or six edges.
When we take an additional barycentric subdivision, each of these edges is subdivided once, so we have a circle of eight or twelve edges.
After two further barycentric subdivisions, the simplicial neighbourhoods of these simplicial singular fibres are solid tori and do not intersect in their interior; one more suffices to ensure that they are disjoint.
Thus, if we take three barycentric subdivisions to obtain $\mathcal{T}^{(82)}$, we ensure that the simplicial neighbourhoods of the singular fibres are disjoint solid tori and transverse to each edge of each singular fibre there is a meridian curve of length at most $6\times 2^3 = 48$.
\end{proof}

\section{Bounding the triangulation complexity}
\label{section:triangulationbounds}

We will now use the simplicial singular fibres in $\mathcal{T}^{(82)}$ to determine the triangulation complexity of $M$ up to a multiplicative constant.
Both the upper bound and the component of the lower bound that is in terms of the Euler characteristic of the base surface are straightforward.

\begin{figure}[th!]
\centering
  \resizebox{0.25\textwidth}{!}{\includegraphics{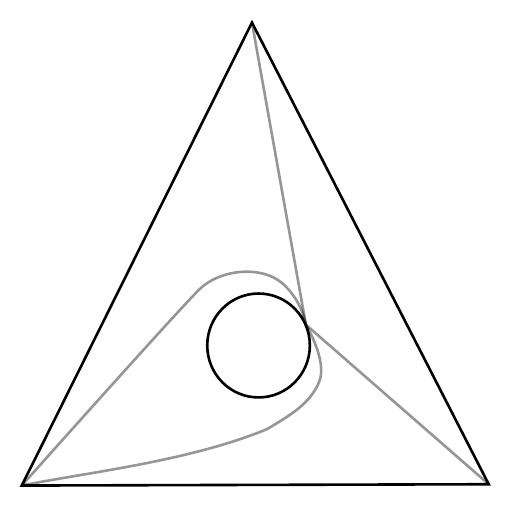}}
  \caption{Subdividing a triangle to add a new boundary component to a surface triangulation. The edges in grey are new internal edges; the central circle is the new boundary component.}
  \label{fig_new_bdry_cpt}
\end{figure}

\begin{prop}
Let $M$ be a manifold with Seifert data $[\Sigma, (p_1, q_1), \ldots, (p_n, q_n)]$ with non-empty boundary.
Then the triangulation complexity is bounded above by
$$\Delta(M) \leq 96|\chi(\Sigma)| + 176 + 70\sum_{i=1}^n  \norm{q_i/p_i}.$$
\label{prop:upper_bound}
\end{prop}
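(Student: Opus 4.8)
The plan is to prove this upper bound by exhibiting an explicit triangulation of $M$, realizing it as a Dehn filling of a product $\Sigma'\times S^1$ and then counting tetrahedra stage by stage. First I would let $\Sigma'$ be $\Sigma$ with $n$ extra boundary components, one for each singular fibre. Deleting fibred neighbourhoods of the singular fibres from $M$ leaves a circle bundle over $\Sigma'$; since $\Sigma'$ has boundary this bundle is trivial, so the fibred part is $\Sigma'\times S^1$, and $M$ is recovered by gluing a solid torus into each of the $n$ new boundary tori with meridian sent to $p_i\mu + q_i\lambda$, where $\mu$ is the section direction (the new boundary circle) and $\lambda$ is the fibre direction (the $S^1$ factor). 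This is exactly the Dehn-filling description of creating a singular fibre of type $(p_i, q_i)$, and it leaves the genuine boundary of $M$ (coming from $\del\Sigma$) unfilled.

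Next I would triangulate $\Sigma'$ with a number of triangles linear in $|\chi(\Sigma')|$: triangulate $\Sigma$ with $O(|\chi(\Sigma)|)$ triangles, and then add each new boundary component locally by subdividing a single triangle as in Figure~\ref{fig_new_bdry_cpt}, which costs a bounded number of triangles per component and can be arranged so that each new boundary circle is a one-vertex loop. Crossing the triangulated surface with $S^1$ (given a one-vertex, one-edge triangulation) and cutting each triangular prism $\triangle\times I$ into three tetrahedra produces a triangulation of $\Sigma'\times S^1$ whose tetrahedron count is a constant multiple of the triangle count, hence linear in $|\chi(\Sigma')| \le |\chi(\Sigma)| + n$. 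The prism diagonals must be chosen consistently (for instance via a global vertex order) so that the face identifications wrapping around the $S^1$ agree.

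The crucial feature of this set-up is that, because each new boundary circle is a one-vertex loop, the induced triangulation on the corresponding boundary torus $(\text{loop})\times S^1$ is a one-vertex triangulation of the torus in which one edge is the fibre $\lambda$ and another is the section $\mu$. I can then invoke Proposition~\ref{prop:layeredtriangulationcount} to glue in, for each $i$, a layered solid torus whose induced boundary triangulation matches this torus and whose meridian is $p_i\mu + q_i\lambda$, using at most $\norm{q_i/p_i}+2$ tetrahedra. Summing over the singular fibres, the fillings contribute $\sum_{i=1}^n(\norm{q_i/p_i}+2) = \sum_{i=1}^n\norm{q_i/p_i} + 2n$ tetrahedra.

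Finally I would assemble the count. Since $0 < q_i < p_i$ forces $\norm{q_i/p_i}\ge 1$, we have $n \le \sum_{i=1}^n\norm{q_i/p_i}$, so both the $n$ appearing through $|\chi(\Sigma')|$ and the $2n$ from the fillings can be absorbed into the $\sum_i\norm{q_i/p_i}$ term; the $|\chi(\Sigma)|$ contribution and the additive constant (handling degenerate base surfaces such as the disc and annulus, where one always needs a fixed number of tetrahedra) come from the surface triangulation. This gives a bound linear in $|\chi(\Sigma)|$, in $\sum_i\norm{q_i/p_i}$, and a constant; tracking the explicit triangle counts in the construction yields the stated coefficients $96$, $70$, and $176$. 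The main obstacle is the middle step: producing an explicit triangulation of $\Sigma'\times S^1$ with controlled tetrahedron count whose boundary restricts on each new torus to the one-vertex triangulation needed for Proposition~\ref{prop:layeredtriangulationcount}, with all prism diagonals chosen consistently around the $S^1$.
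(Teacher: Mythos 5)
Your overall strategy is the same as the paper's: realise $M$ as Dehn fillings of a circle bundle over $\Sigma'=\Sigma_n$, triangulate that bundle with linearly many tetrahedra, and fill each new boundary torus with a layered solid torus of size $\norm{q_i/p_i}+2$ via Proposition~\ref{prop:layeredtriangulationcount}, absorbing the various multiples of $n$ into $\sum_i\norm{q_i/p_i}$ using $\norm{q_i/p_i}\geq 1$. However, there are two gaps. The more serious one is that your claim ``since $\Sigma'$ has boundary this bundle is trivial, so the fibred part is $\Sigma'\times S^1$'' is false when $\Sigma$ is nonorientable: the complement of the singular fibres is then the twisted product $\Sigma'\ttimes S^1$, not $\Sigma'\times S^1$, and the Seifert data in the statement allows nonorientable $\Sigma$. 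The paper builds this case separately out of cell structures on $\Gamma_p\ttimes I$ glued along vertical annuli, and there the induced triangulation on each boundary torus is \emph{not} automatically a one-vertex triangulation containing the fibre and section as edges; three extra Pachner moves per filled torus (Figure~\ref{fig:boundaryreduction}) are needed before Proposition~\ref{prop:layeredtriangulationcount} applies. Your argument that the boundary triangulation is automatically of the right form only works in the orientable product case.

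The second issue is the one you yourself flag as ``the main obstacle'': cutting each prism into three tetrahedra via a global vertex order requires the surface triangulation to be simplicial with distinct, orderable vertices in each triangle, but the efficient triangulations you need (one vertex per boundary component, $O(|\chi|)$ triangles) have triangles with repeated vertices, so no such order exists and the staircase subdivision is not directly available. The paper sidesteps this by triangulating the boundary of each prism and coning to an interior point (eight tetrahedra per prism rather than three), which works for an arbitrary cell structure; some such fix is needed to complete your construction, and the specific constants $96$, $176$, $70$ in the statement come from that choice together with the nonorientable bookkeeping.
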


\begin{proof}
Let $b$ be the number of boundary components of $\Sigma$, and let $a$ be, if $\Sigma$ is orientable, twice its genus, and otherwise let it be its nonorientable genus.
Note that $\chi(\Sigma) = 2-a-b$.
Write $\Sigma_n$ for the surface $\Sigma$ with $n$ additional boundary components added.

First, suppose that $\Sigma_n$ is orientable.
When $\Sigma_n$ is an annulus or disc, we can construct a triangulation of size three -- in particular, of size at most $6a-1+4(b+n)$ -- with one vertex on each boundary component.
Otherwise, we can triangulate the closed orientable surface of the correct genus with at most $2a-2$ faces.
We can then add a boundary component in the middle of a face at the cost of 3 additional faces, as shown in Figure~\ref{fig_new_bdry_cpt}, again with one vertex on each boundary component, to give the same bound.
Take the product of this triangulation with an interval to obtain a subdivision of $\Sigma_n\times I$ into triangular prisms, where the same triangulation is induced on the top and bottom faces.
Glue top to bottom to obtain $\Sigma_n\times S^1$, divided into at most $6a-1+4(b+n)$ triangular prisms.

If $\Sigma_n$ is nonorientable, note that there is a triangulation of the M\"obius band using two triangles with two vertices on its boundary, and by adding one more triangle we can ensure that the boundary has a single vertex.
We can add a boundary component to such a surface using three additional triangles, as in the orientable case.
We can use this triangulation of the nonorientable surface $\Gamma_p$ with nonorientable genus one and $p$ boundary components, by a total of $3p$ triangles, to construct a cell structure on $\Gamma_p\ttimes I$ using $3p$ triangular prisms: one over each of these triangles.
Now, we can glue two of these cell structures, for $\Gamma_{p'}\ttimes I$ and $\Gamma_{p''}\times I$, along two annuli $\gamma\times I$ that sit over one each of their boundary components.  
Thus we obtain a cell structure for $\Sigma_n\ttimes I$, and we can glue two of these by the identity map on their boundaries to get a cell structure for $\Sigma_n\ttimes S^1$ composed of at most $2(6a-6+4(b+n))$ triangular prisms.

Pick a subdivision of each quadrilateral face into two triangles, then cone each triangular prism to an interior point, subdividing it into eight tetrahedra: one for each of the faces of this triangulation of the boundary of the prism.
This procedure gives a triangulation of $\Sigma_n\pmttimes S^1$ by at most $16(6a-1+4(b+n))$ tetrahedra.
Note that we were free to choose the triangulation of the quadrilaterals in the boundary.

\begin{figure}[t]
  \centering
  \begin{subfigure}[b]{0.20\textwidth}
  	\centering
   	\resizebox{\textwidth}{!}{\includegraphics{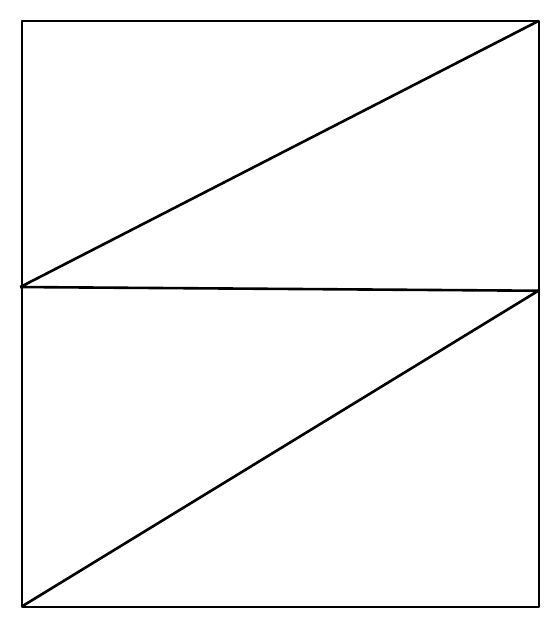}}
	\caption{}
  \end{subfigure}
  \begin{subfigure}[b]{0.20\textwidth}
  	\centering
   	\resizebox{\textwidth}{!}{\includegraphics{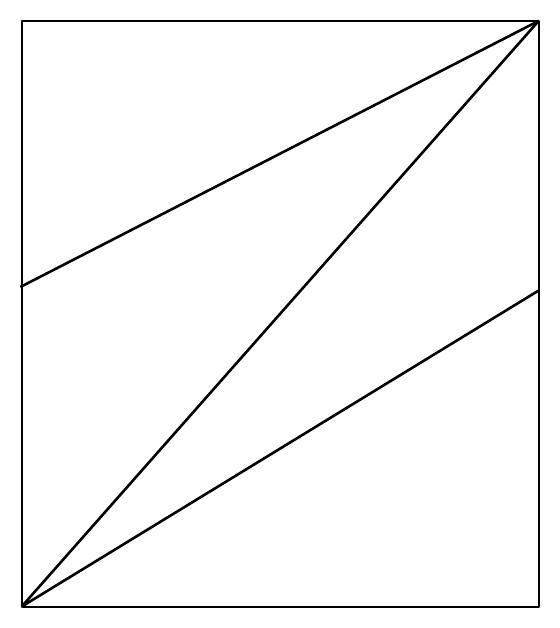}}
	\caption{2-2 move}
  \end{subfigure}
  \begin{subfigure}[b]{0.20\textwidth}
  	\centering
   	\resizebox{\textwidth}{!}{\includegraphics{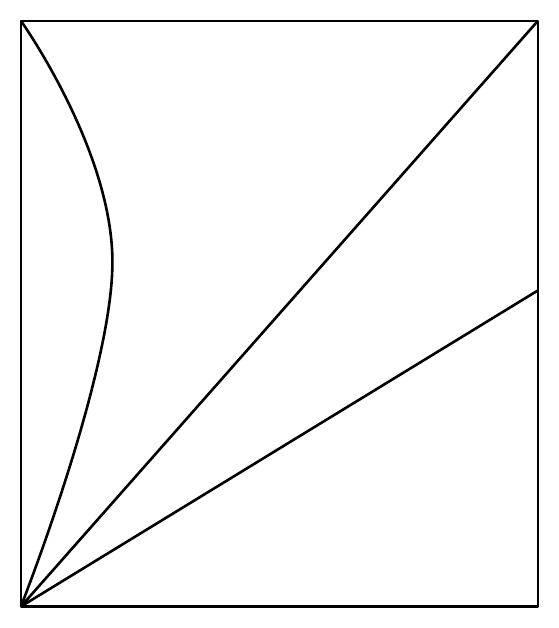}}
	\caption{2-2 move}
  \end{subfigure}
  \begin{subfigure}[b]{0.233\textwidth}
  	\centering
   	\resizebox{\textwidth}{!}{\includegraphics{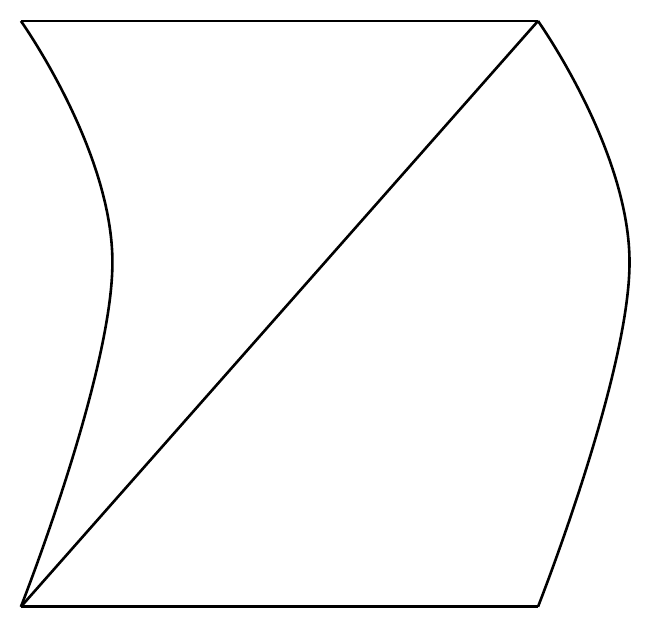}}
	\caption{3-1 move}
  \end{subfigure}
  \caption{Applying three Pachner moves to the triangulation of a boundary torus of $\Sigma_n\times I$ to reduce it to a single vertex triangulation where two of the edges are the fibre and section of the product structure.}
  \label{fig:boundaryreduction}
\end{figure}

Some of these boundary tori correspond to boundary components of $M$, while others will be Dehn filled to create singular fibres.
We restrict our attention to the latter class.
The induced triangulation on each boundary torus depends on the orientability of $\Sigma_n$.
We wish it to be a single vertex triangulation, where the fibre and section curves of the product structure are two of the edges.
If $\Sigma_n$ is orientable, this is automatic.
If it is nonorientable, the initial triangulation of each boundary component is by taking a strip of two rectangles and identifying opposite sides, then subdividing each rectangle into two triangles in a way we can choose.
As shown in Figure~\ref{fig:boundaryreduction}, we can apply three Pachner moves (that is, add three additional tetrahedra) to obtain the required properties.
We have used a total of at most $16(6a-6+4(b+n)) + 3n$ tetrahedra.

Consider a toroidal boundary component that we wish to Dehn fill by gluing the meridian of a solid torus to the slope $q_i/p_i$ curve.
By Proposition~\ref{prop:layeredtriangulationcount}, we can do this using at most $\norm{q_i/p_i}+2$ tetrahedra. 
Thus $\Delta(M)$ satisfies
\begin{align*}
   \Delta(M)    &\leq 16(6a-1+4(b+n)) + 3n + \sum_{i=1}^n (\norm{q_i/p_i}+2)
\end{align*}
so, noting that $\norm{q_i/p_i}$ is at least 1 so $\sum_i \norm{q_i/p_i}$ is at least $n$ and that $|\chi(\Sigma)| = |a + b - 2|$, we have that
\begin{align*}
  \Delta(M)	&\leq 96a - 16 + 64b + 69n + \sum_{i=1}^n \norm{q_i/p_i}\\
  		&\leq |96a + 96b - 192| + 176  + 70\sum_{i=1}^n \norm{q_i/p_i}
\end{align*}
which gives us the bound.
\end{proof}

\begin{lemma}
Let $M$ be a manifold with Seifert data $[\Sigma, (p_1, q_1), \ldots, (p_n, q_n)]$ and non-empty boundary.
Then $\Delta(M)$ is at least $\frac{1}{6}(|\chi(\Sigma)|+1).$
\label{lemma:easylowerbound}
\end{lemma}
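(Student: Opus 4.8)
The plan is to bound $\Delta(M)$ below by the first Betti number of the base surface and then to identify that quantity with $|\chi(\Sigma)|+1$. Fix any material triangulation $\mathcal{T}$ of $M$ with $t=\Delta(M)$ tetrahedra. The first step is to bound $b_1(M;\QQ)$ from above in terms of $t$. Viewing $\mathcal{T}$ as a CW (indeed $\Delta$-) complex, $H_1(M;\QQ)$ is a subquotient of the group of $1$-chains, so $b_1(M;\QQ)$ is at most the number $E$ of edges of $\mathcal{T}$. Since each tetrahedron contributes six edges and face/edge identifications only merge edges, $E\le 6t$, and hence $b_1(M;\QQ)\le 6t$.

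The second step is to produce the matching lower bound $b_1(M;\QQ)\ge b_1(\Sigma;\QQ)$ using the Seifert projection $p\colon M\to\Sigma$ onto the underlying base surface. Each generator of $\pi_1(\Sigma)$ is represented by a loop that, being generic in a surface, may be taken to avoid the finitely many (interior) images of the exceptional fibres; over the complement of those points $p$ is an honest circle bundle, so such loops lift to $M$, and therefore $p_*\colon\pi_1(M)\to\pi_1(\Sigma)$ is surjective. (Equivalently, $\pi_1(M)$ surjects onto the orbifold group of the base, which in turn surjects onto $\pi_1(\Sigma)$ by killing the cone-point loops.) Abelianising and tensoring with $\QQ$ gives a surjection $H_1(M;\QQ)\twoheadrightarrow H_1(\Sigma;\QQ)$, whence $b_1(M;\QQ)\ge b_1(\Sigma;\QQ)$.

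Finally I would compute $b_1(\Sigma)$. Because $M$ has non-empty boundary, so does $\Sigma$; thus $\Sigma$ is homotopy equivalent to a wedge of circles, $b_2(\Sigma)=0$, and $\chi(\Sigma)=1-b_1(\Sigma)$, i.e.\ $b_1(\Sigma)=1-\chi(\Sigma)$. When $\chi(\Sigma)\le 0$ this equals exactly $|\chi(\Sigma)|+1$, so the chain $6t\ge b_1(M;\QQ)\ge b_1(\Sigma;\QQ)=|\chi(\Sigma)|+1$ yields $\Delta(M)\ge\tfrac{1}{6}\bigl(|\chi(\Sigma)|+1\bigr)$. The only case not covered by $\chi(\Sigma)\le 0$ is $\chi(\Sigma)=1$, namely $\Sigma$ a disc, where $|\chi(\Sigma)|+1=2$ and the asserted bound $\tfrac{1}{3}$ is weaker than the trivial bound $\Delta(M)\ge 1$; I would dispatch this case by hand. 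This argument is a short sequence of standard facts, so there is no serious obstacle: the only points needing care are the surjectivity of $p_*$ (which holds for both orientable and non-orientable $\Sigma$ and irrespective of the exceptional fibres) and the verification that $b_1\le E\le 6t$ holds for an arbitrary material triangulation in the low-dimensional-topology sense, rather than merely for a simplicial complex.
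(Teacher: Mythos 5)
Your proof is correct and follows essentially the same strategy as the paper: dispose of the disc case via the trivial bound $\Delta(M)\geq 1$, bound the rank of $H_1(M;\QQ)$ above by the edge count $6|\mathcal{T}|$, and bound it below by a quantity equal to $|\chi(\Sigma)|+1$ when $\chi(\Sigma)\leq 0$. The only divergence is in the lower bound on $b_1(M)$: the paper abelianises an explicit van Kampen presentation of $\pi_1(M)$ to exhibit a free summand of rank $|2-\chi(\Sigma)|$ (orientable) or $|1-\chi(\Sigma)|$ (nonorientable), whereas you use the surjection $\pi_1(M)\twoheadrightarrow\pi_1(\Sigma)$ induced by the Seifert projection to get $b_1(M)\geq b_1(\Sigma)=1-\chi(\Sigma)$; your route is marginally more elementary and sidesteps the presentation, at the cost of a bound weaker by one, which is immaterial here.
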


\begin{proof}
First, note that as $\Delta(M) \geq 1$, this bound holds when $\Sigma$ is a disc, so we can assume that $\chi(\Sigma)$ is at most 0.
Let $\mathcal{T}$ be a minimal triangulation of $M$.
Note that $\mathcal{T}$ has at most $6|\mathcal{T}|$ edges so its first homology group is of rank at most $6|\mathcal{T}|$.
We can use van Kampen's theorem to construct $\pi_1(M)$, in the same order as in the proof of Proposition~\ref{prop:upper_bound}: first, we take $\Sigma_g^{b+n}\pmttimes S^1$ and then glue in $n$ solid tori with their meridians glued to the specified slopes.
This presentation is given explicitly, for example, in Theorem 2.2.2 of~\cite{brin}.
Abelianising, we find that the first homology has a free summand of rank $|2 -\chi(\Sigma)|$ if $\Sigma$ is orientable, and $|1-\chi(\Sigma)|$ if it is nonorientable.
As $\chi(\Sigma)$ is nonpositive, these are each at least $|\chi(\Sigma)|+1$.
\end{proof}

We now need to show that $\Delta(M)$ is bounded below in terms of the sum of the continued fraction expansions.
The idea is as follows: in $\mathcal{T}^{(82)}$, for each non-multiplicity-two singular fibre we have a simplicial neighbourhood with a meridian curve of length at most 48.
Considering the $(0,0)$-nicely embedded subnormal neighbourhood $\mathcal{G}_i$ of this singular fibre with distinguished annulus $A_i$ in its boundary from Corollary~\ref{corr:nicehandlestructure}, the boundary components of $A_i$ give us a normal curve in the boundary of $\mathcal{G}_i$ of slope $q_i/p_i$ (with respect to some (meridian, longitude) coordinate system on the boundary of the solid torus) which runs through each handle in the boundary at most once.
We thus bring our attention to the complexity of $T^2\times I$ where we have a controlled length curve of a specified slope in each boundary component.
We will need our result to be in terms of handle structures, and so rather than {triangulation} complexity, we give a result about the {tetrahedral} complexity of the handle structure $\mathcal{G}_i$.

The \emph{boundary graph} of a 0-handle $H$ is the cell structure of its intersection with the 1- and 2-handles, as embedded in $\del H \cong S^2$.

\begin{figure}[t]
  \centering
  \begin{subfigure}[b]{0.40\textwidth}
  	\centering
   	\resizebox{0.55\textwidth}{!}{\includegraphics{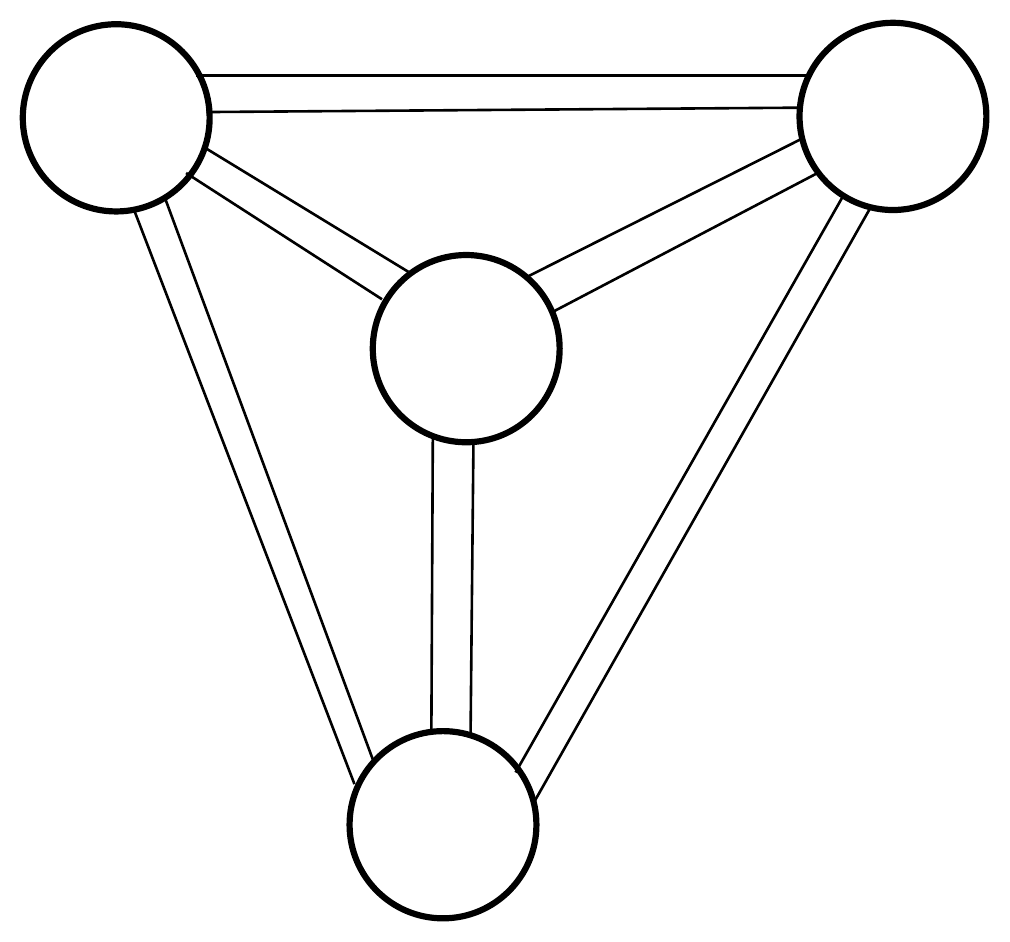}}
	\caption{Tetrahedral boundary graph. Any component of the complement may arise from intersection with $\del M$.}
  \end{subfigure}
  \begin{subfigure}[b]{0.40\textwidth}
  	\centering
   	\resizebox{0.8\textwidth}{!}{\includegraphics{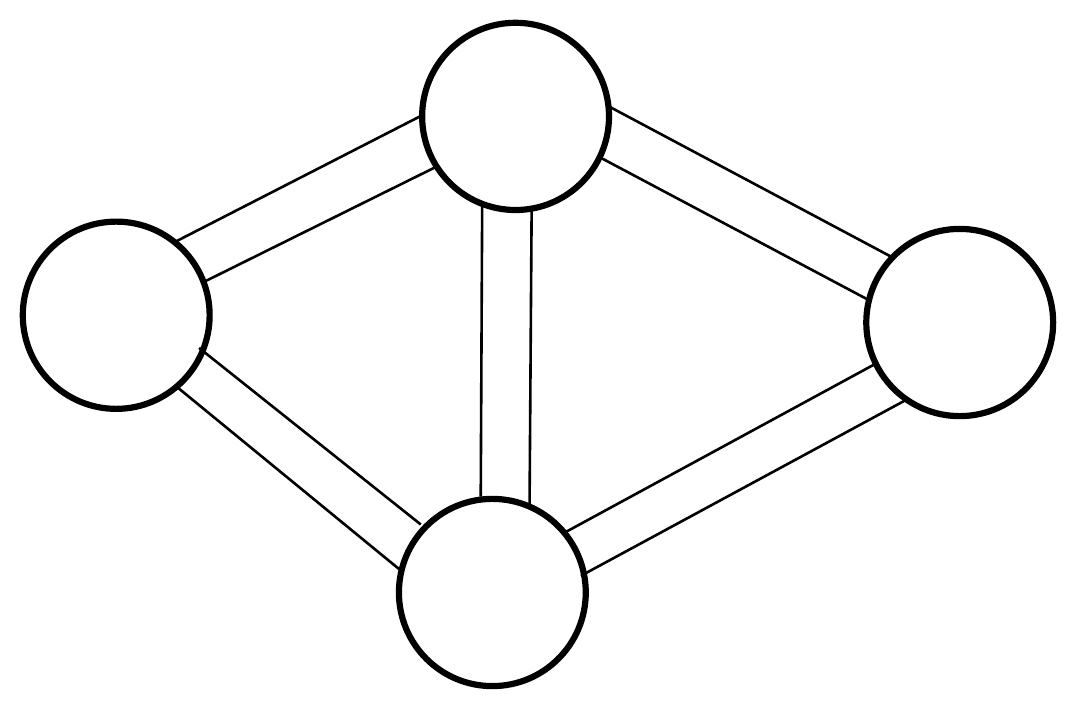}}
	\caption{Semi-tetrahedral boundary graph. One of the components of the complement, which is adjacent to four 1-cells, must arise from $\del M$.}
  \end{subfigure}
  \begin{subfigure}[b]{0.40\textwidth}
  	\centering
   	\resizebox{0.8\textwidth}{!}{\includegraphics{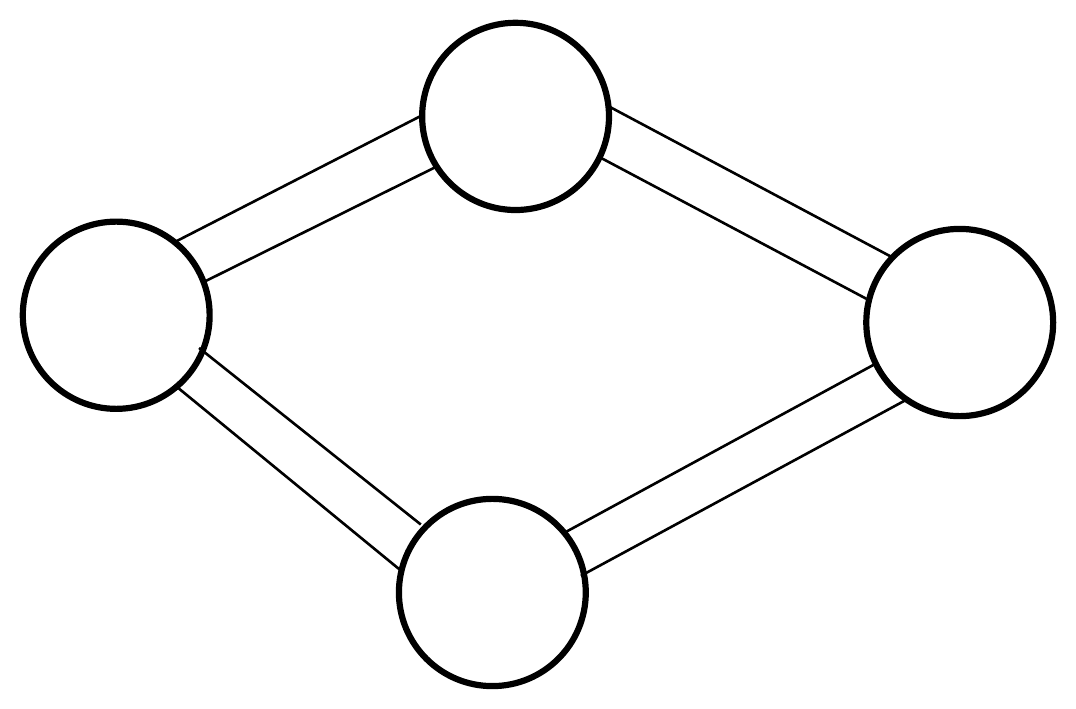}}
	\caption{Parallelity of length four boundary graph. Both of the components of the complement must arise from intersection with $\del M$.}
  \end{subfigure}
  \begin{subfigure}[b]{0.40\textwidth}
  	\centering
   	\resizebox{0.55\textwidth}{!}{\includegraphics{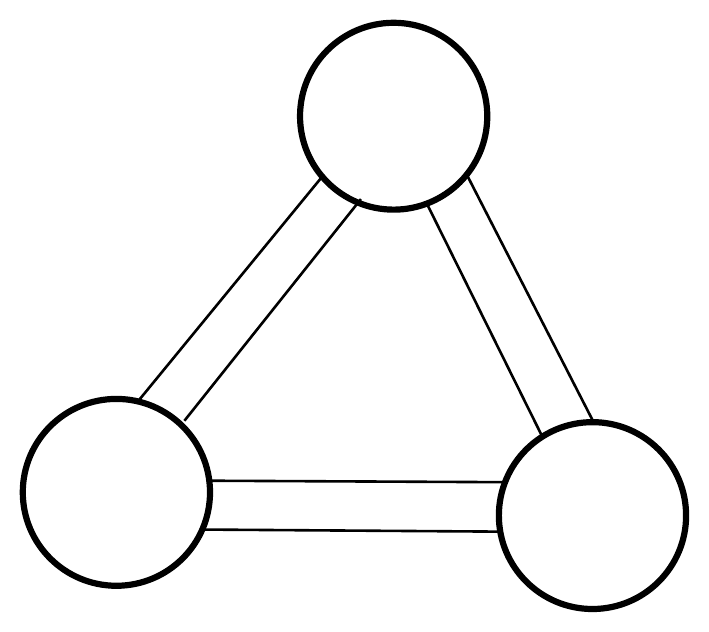}}
	\caption{Product of length three boundary graph. At least one of the components of the complement must arise from intersection with $\del M$.}
  \end{subfigure}
  \caption{The possible boundary graphs in a pre-tetrahedral handle structure.}
  \label{fig:tetrahedralhandlestructure}
\end{figure}

\begin{defn}
A handle structure is \emph{pre-tetrahedral} if the boundary graph of each 0-handle is tetrahedral, semi-tetrahedral, product of length three or parallelity of length four, as illustrated in Figure~\ref{fig:tetrahedralhandlestructure}.

Let $\mathcal{H}$ be a pre-tetrahedral handle structure.
For each 0-handle $H$, let $\alpha$ be the number of components of its intersection with the 3-handles, and let $\beta$ be $\frac{1}{2}$ if $H$ is tetrahedral, $\frac{1}{4}$ if $H$ is semi-tetrahedral, and otherwise zero. 
The \emph{tetrahedral complexity} $\Delta(\mathcal{H})$ of $\mathcal{H}$ is the sum of $\frac{\alpha}{8} + \beta$ over all the 0-handles.
\end{defn}

This definition may seem rather unmotivated; its origin is that if $S$ is a normal surface in a triangulation $\mathcal{T}$ for a closed manifold $M$, and $\mathcal{H}$ is the induced dual handle structure on $M\backslash\backslash S$, then $\Delta(\mathcal{H}) = \Delta(\mathcal{T})$~\cite[Lemma 4.12]{LacPur}.
We will convert our $(0,0)$-nicely embedded subnormal solid tori into pre-tetrahedral handle structures whose tetrahedral complexity is linear in the number of 0-handles they originally contained, and then invoke the following result.

\begin{lemma}
\label{lemma_torus_product_bound}
There is a constant $k > 0$ such that the following holds.
Suppose that $\mathcal{H}$ is a pre-tetrahedral handle structure of $T^2\times [0,1]$ that admits no annular simplification.
Let $\mathcal{C}$ be the cell structure of $T^2\times [0,1]$ where each handle is a 3-cell, and each component of intersection of $j$ handles or of $j-1$ handles and the boundary is a $(4-j)$-cell.
Let $\gamma_0$ and $\gamma_1$ be distinguished essential simple closed curves in the 1-skeleton of $\mathcal{C}$ that are in $T^2\times\{0\}$ and $T^2\times\{1\}$ respectively.
Put coordinates on $T^2\times\{0\}$ such that $\gamma_0$ has slope $1/0$.
Let $q/p$ be the slope of $\gamma_1$ in the coordinates induced from those on $T^2\times\{0\}$ by the product structure.
Suppose that each $\gamma_i$ has length at most $\ell_i$, and that $0 < q < p$.
Then the tetrahedral complexity $\Delta(\mathcal{H})$ is at least $\frac{1}{k}\left(\norm{q/p}-578-96\min(l_0, l_1)\right)$.
\end{lemma}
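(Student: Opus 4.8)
The plan is to read off from $\mathcal{H}$ a path in the Farey graph joining the slope $1/0$ of $\gamma_0$ to the slope $q/p$ of $\gamma_1$ whose length is controlled linearly by $\Delta(\mathcal{H})$ together with the endpoint corrections. Since the Farey distance between $1/0$ and $q/p$ equals $\norm{q/p}-1$ (as discussed in Section~\ref{section:continuedfractions}, following \cite[Lemma 5.4]{LacPurSol}), such a bound rearranges into the desired lower bound on $\Delta(\mathcal{H})$. The whole argument is a quantitative version of the slogan that a handle structure of $T^2\times[0,1]$ interpolating between two boundary slopes must ``cross'' the Farey graph, and that each tetrahedral piece can advance the crossing only boundedly.

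First I would produce a discrete sweep-out of $T^2\times[0,1]$ adapted to $\mathcal{H}$: order the handles by the $[0,1]$-coordinate of the dual cell structure $\mathcal{C}$, and for each regular value $t$ record the level multicurve $L_t=(T^2\times\{t\})\cap(\text{skeleton})$, normalised so that its essential part is a multicurve of a single well-defined slope $s(t)\in\QQ\cup\{\infty\}$. That the essential part carries a single slope, and that inessential components may be discarded, is where I expect to use both that $\mathcal{H}$ is pre-tetrahedral (so each $0$-handle has bounded boundary combinatorics) and that $\mathcal{H}$ admits no annular simplification (so no parallelity configuration lets a component be compressed or lets an interval of levels carry only inessential curves).

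The core local estimate is that passing a single critical level changes $s(t)$ by bounded Farey distance. Crossings at $1$- and $2$-handles, and passage through product (parallelity) regions, leave the slope unchanged; only the $0$-handles counted by $\Delta(\mathcal{H})$ can alter it, and each such crossing performs a bounded number of normal surgeries on $L_t$, each moving the slope across $O(1)$ Farey edges. Summing over all critical levels, the total Farey distance traversed between the two innermost regular levels is at most $k'\Delta(\mathcal{H})$ for a universal constant $k'$. Finally I would account for the two ends: the innermost level slopes $s(0^+)$ and $s(1^-)$ need not equal $1/0$ and $q/p$, but the discrepancy is bounded by the Farey distance from each boundary's induced triangulation, which is controlled by the combinatorial length of $\gamma_i$ in the $1$-skeleton. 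Anchoring the count at whichever of $\gamma_0,\gamma_1$ is shorter, these endpoint corrections combine into the $96\min(\ell_0,\ell_1)$ term and the additive constant. Assembling the triangle inequality
\[
\norm{q/p}-1 \;=\; d_{\mathrm{Farey}}(1/0,\, q/p) \;\le\; k'\Delta(\mathcal{H}) + 96\min(\ell_0,\ell_1) + 578
\]
and solving for $\Delta(\mathcal{H})$ yields the claimed inequality for a suitable $k$.

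The main obstacle is the local slope-change step together with the single-slope normalisation of the level multicurves: one must rule out the level sets becoming compressible or multi-sloped as handles are crossed, which is precisely what the pre-tetrahedral and no-annular-simplification hypotheses are there to prevent, and carrying the explicit constants $578$ and $96$ through the per-handle surgeries and the endpoint estimates is the bookkeeping-heavy part of the proof.
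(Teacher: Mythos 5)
Your high-level strategy matches the paper's: exhibit a path in the Farey graph (equivalently, the one-vertex triangulation graph of $T^2$) from the slope of $\gamma_0$ to the slope of $\gamma_1$ whose length is at most $k'\Delta(\mathcal{H})$ plus endpoint corrections of size $O(\min(\ell_0,\ell_1))$, then use that the Farey distance between the two slope-lines is $\norm{q/p}-1$. The endpoint corrections you describe are also essentially right in spirit: the paper gets them by taking a cellular spine containing the longer curve $\gamma_0$, and then converting a spine at the other end into one containing $\gamma_1$ using at most $24+4\ell_1$ edge swaps, each costing at most $24$ contractions/expansions on a genus-one surface, which is exactly where $578+96\ell_1$ comes from.

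The genuine gap is your ``core local estimate.'' You assert that one can sweep $T^2\times[0,1]$ through $\mathcal{H}$ so that each regular level carries an essential multicurve of a single well-defined slope, and that crossing a single $0$-handle moves this slope by $O(1)$ in the Farey graph. Neither claim is established, and this is precisely where all the difficulty of the lemma is concentrated. A handle structure of $T^2\times I$ need not respect the product structure in any way: a level torus of a sweep-out can meet a single handle in many components, its intersection with the skeleton need not be a multicurve with a canonical essential slope, and there is no a priori monotonicity or bounded-step property as you pass critical levels. The paper does not attempt such a local argument; instead it invokes Theorem~9.1 of~\cite{LacPurSol}, which bounds the distance in the \emph{spine graph} $Sp(T^2)$ between cellular spines of the two boundary components by $k'\Delta(\mathcal{H})$, and then uses the quasi-isometry between $Sp(T^2)$ and $Tr(T^2)$ (identified with the Farey tessellation). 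That theorem in turn rests on Masur--Mosher--Schleimer's Theorem~6.1, whose constant is nonconstructive --- the paper remarks on this explicitly. If your per-handle $O(1)$ slope-change estimate were provable by direct sweep-out bookkeeping, it would yield an explicit constant and bypass the MMS machinery entirely; as written it is an assertion of the theorem's hardest content rather than a proof of it. To repair the argument you would need to either prove the local estimate (making precise the single-slope normalisation and ruling out compressible or multi-sloped levels, which your hypotheses of pre-tetrahedrality and no annular simplifications do not obviously accomplish on their own) or, as the paper does, route the argument through the spine-graph distance bound of~\cite{LacPurSol}.
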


We first define some terminology.
A \emph{spine} in a closed surface is an embedded graph, with no vertices of degree 1 or 2, whose complement is a disc.
A \emph{cellular spine} in a surface equipped with a cell structure is a spine that is a subset of the 1-skeleton of the cell structure.
We can perform some operations to modify a spine: an \emph{edge contraction} is the operation of collapsing an edge that joins two distinct vertices, while an \emph{edge expansion} is its inverse.
If $\Gamma$ is a spine in a surface $S$, $e$ is an arc properly embedded in the complement of $\Gamma$, and $f$ is an edge of the graph $\Gamma\cup e$ with two different components of $S\backslash\backslash (\Gamma\cup e)$ on each side of it, then an \emph{edge swap} is the operation of removing $f$ from $\Gamma$ and adding $e$.
The \emph{spine graph} of S, denoted $Sp(S)$, is the graph whose vertices are spines of $S$ up to isotopy and whose edges are edge contractions and expansions.

The proof of this lemma is a modification of part of the proof of Theorem 1.1 in~\cite{LacPurSol}.

\begin{proof}
By relabelling we can assume that $\ell_0 \geq \ell_1$.
Consider the cell structure that $\mathcal{C}$ induces on the boundary.
We can use the product structure to compare the cell structures on the two boundary components by projecting them onto the same torus.
Pick a {cellular spine} $\Gamma_0$ for $T^2\times \{0\}$ containing $\gamma_0$.
Such a spine exists as $\gamma_0$ is not separating, so we can take $\Gamma$ to be the union of $\gamma_0$ and some arc in the 1-skeleton that starts and ends on $\gamma_0$.
There is a universal constant $k'$ such that there is a sequence of at most $k'\Delta(\mathcal{H})$ edge contractions and expansions taking $\Gamma_0$ to some cellular spine $\Gamma_1$ in $T^2\times\{1\}$~\cite[Theorem 9.1]{LacPurSol}.
Now there is a sequence of at most $24+4\ell_1$ edge swaps taking $\Gamma_1$ to some cellular spine $\Gamma_2$ for $T^2\times\{1\}$ containing $\gamma_1$~\cite[Lemma 4.15]{LacPurSol}.
As we can perform any edge swap in a spine on a surface $S$ using at most $24 g(S)$ edge expansions and contractions, where $g(S)$ is the genus of $S$~\cite[Lemma 8.3]{LacPur}, we can move from $\Gamma_0$ to $\Gamma_2$ using a total of at most $k'\Delta(\mathcal{H}) + 24(24+4\ell_1)$ edge expansions and contractions.

Consider two complexes associated to the torus: the graph of one-vertex triangulations, $Tr(T^2)$, where the edges are 2-2 Pachner moves, and the spine graph $Sp(S)$.
Each one-vertex triangulation has a spine dual to it, and edges between one-vertex triangulations, which are 2-2-Pachner moves, can be sent to a composition of an edge contraction and an expansion, so there is a quasi-embedding $Tr(T^2)\to Sp(T^2)$ that multiplies all distances by 2.
Note that any vertex of $Sp(T^2)$ is at most distance one from one of these dual spines, so this map is in fact a quasi-isometry.
(In general, for an orientable surface, this argument holds but the distance bound from an arbitrary spine to one dual to a triangulation depends on the genus.)

Now, $\Gamma_0$ and $\Gamma_2$ are within distance one of spines $\Gamma_0'$ and $\Gamma_2'$ with only valence three vertices.
Let $T_0$ and $T_2$ be the dual triangulations to these spines, which contain edges of the same slopes as $\gamma_0$ and $\gamma_1$ respectively.
As we can identify the Farey tree and $Tr(T^2)$, their distance in $Tr(T^2)$ is at least the distance between the lines in the Farey graph whose vertices are the triangulations containing edges of these slopes, which is $\norm{q/p}-1$~\cite[Lemma 5.4]{LacPurSol}.

From the quasi-isometry $\frac{1}{A}d_{Tr(T^2)}(T_0, T_2) - B \leq d_{Sp(T^2)}(\Gamma_0', \Gamma_2')$ for some universal constants $A$ and $B$, so $$\frac{1}{A}(\norm{q/p}-1) - B \leq d_{Sp(T^2)}(\Gamma_0, \Gamma_2) + 2 \leq k'\Delta(\mathcal{H}) + 24(24 + 4\ell_1) + 2$$
and if we pick $k$ to be at least $k'A$ and also at least $2B$, we have the result.
\end{proof}

The constant $k$ in this result relies on the constant in Theorem 6.1~\cite{MMS}, which as far as the author knows has not been bounded.

We will also need the following combinatorial results.

\begin{theorem}[Theorem 7.6, \cite{LacSchleimerElliptic}]
Let $M$ be a compact 3-manifold with a triangulation $\mathcal{T}$. Let $M'$ be a handlebody with a handle structure $\mathcal{H}'$, and let $A$ be a union of disjoint annuli in $\del M'$ that are unions of cells in the induced cell structure on the boundary. Suppose that $M'$ is embedded in $M$ in such a way that $(\mathcal{H}',A)$ is $(0, 0)$-nicely embedded subnormal in the dual of $T$ . Then we can arrange that the following are all simplicial subsets of $T^{(30)}$:
\begin{enumerate}
   \item each zero-handle of $M'$;
   \item each one-handle of $M'$, vertically collapsed onto its co-core;
   \item and the annuli $A$.
\end{enumerate}
\label{thm:embedsimplicial}
\end{theorem}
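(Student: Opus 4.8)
The plan is to reduce the global statement to a finite list of local models, one inside each tetrahedron of $\mathcal{T}$, and then to straighten every model by an isotopy carried out in a uniformly bounded barycentric subdivision. The $(0,0)$-nicely embedded subnormal hypothesis is exactly what licenses this reduction. By the duality of Definition~\ref{defn:dual_handle}, the $0$-handles of the dual handle structure are the tetrahedra of $\mathcal{T}$ and the $1$-handles are (thickened) faces; so each $0$-handle of $\mathcal{H}'$ sits inside a single tetrahedron as a component of the complement of a family of normal discs, each $1$-handle sits inside a thickened face respecting its product structure, and the annuli $A$ are unions of cells of the induced boundary structure. Because no handle of $\mathcal{H}'$ lies between parallel normal discs, the number of distinct normal disc types that cut off any one such region is bounded by a universal constant, and since there are only seven normal disc types in a tetrahedron, up to affine equivalence there are only finitely many local pictures to treat.

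First I would make the bounding normal discs simplicial. Each normal triangle or quadrilateral meets a tetrahedron in a convex polygon whose boundary is a union of normal arcs on the faces. I would use the elementary fact that a normal arc becomes isotopic to a simplicial arc after a bounded number of barycentric subdivisions, and then cone (or otherwise straighten) the spanning disc to a simplicial disc relative to this boundary. A complementary region of such discs inside a tetrahedron is then a simplicial subset, which realises each $0$-handle of $\mathcal{H}'$ simplicially. The precise count giving $\mathcal{T}^{(30)}$ is obtained by composing the per-arc subdivision bound with the straightening of the discs and of the complementary regions.

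Next I would treat the $1$-handles and the annuli. The co-core of a $1$-handle of $\mathcal{H}'$, after the vertical collapse, is a disc parallel to a face of $\mathcal{T}$ and shared by the two tetrahedra meeting that face; it is straightened to a simplicial disc by the two-dimensional analogue of the arc argument above, using the subdivisions already performed. The essential point is compatibility: the straightening chosen in the two tetrahedra adjacent to a common face must agree along that face. I would therefore fix a \emph{canonical} simplicial representative depending only on the induced cell structure on each face (and, for $A$, on $\partial M$), which forces the two sides to match. The annuli $A$, being unions of cells of the induced boundary structure, are then straightened by the same canonical rule applied on $\partial M$.

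The main obstacle I expect is the simultaneous bookkeeping: keeping the number of barycentric subdivisions uniform and pinned at $30$ while guaranteeing that all the local straightenings glue, across shared faces and along $\partial M$, into one globally embedded simplicial object. Each individual straightening is routine once the finitely many local models are listed, but the real work is arranging them all to be produced by a single canonical rule so that no extra subdivision is forced at the interfaces, and in carrying out the straightenings by an ambient isotopy (rather than an arbitrary simplicial approximation) so as to preserve the embedding of $M'$ and the product structures on its $1$-handles.
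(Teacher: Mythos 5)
You should first note that the paper does not prove this statement at all: it is quoted verbatim as Theorem~7.6 of Lackenby--Schleimer~\cite{LacSchleimerElliptic} and used as a black box, so there is no in-paper proof to compare your argument against. Any assessment therefore has to be of your sketch on its own terms, measured against what the cited source actually has to do.

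On those terms, your outline captures the right strategy: the $(0,0)$-nicely embedded subnormal hypothesis does localise everything to finitely many affine models per tetrahedron (non-parallelity complementary regions of normal discs, of which there are boundedly many types), and the proof does proceed by straightening normal arcs and discs in a bounded barycentric subdivision and then imposing a canonical rule so that the local straightenings agree across shared faces. You also correctly identify the two genuine difficulties: that the straightening must be realised by an ambient isotopy preserving the embedding and the product structures on the $1$-handles, and that the subdivision count must be kept uniform. But your proposal stops exactly where the content of the theorem begins: the constant $30$ is not derived, the simultaneous compatibility of the straightenings of the $0$-handles, the collapsed $1$-handles, and the annuli $A$ is asserted via an unspecified ``canonical rule'' rather than constructed, and the interaction between straightening the bounding normal discs and keeping the attaching annuli (which are unions of cells spread across both normal discs and faces) simplicial is not addressed. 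There is also a small slip: $A$ lies in $\del M'$, the boundary of the handlebody, not in $\del M$, so its straightening is part of straightening the boundaries of the $0$- and $1$-handles of $\mathcal{H}'$ rather than an operation on $\del M$. In short, this is a reasonable reconstruction of the shape of the argument in~\cite{LacSchleimerElliptic}, but it is an outline of that proof rather than a proof, and the paper under review offers nothing to check it against.
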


\begin{lemma}
Let $A$ be a simplicial triangulation of an annulus, composed of $|A|$ triangles.
There is a triangulation of $D^2\times I$, comprising at most $3|A|$ tetrahedra, that induces the triangulation $A$ on the boundary $\del D^2\times I$.
\label{lemma:2disk_construction}
\end{lemma}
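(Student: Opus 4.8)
The plan is to realise $D^2\times I$ as a cone on its boundary. Since $D^2\times I$ is a $3$-ball, its boundary is a $2$-sphere that decomposes as the lateral annulus $\del D^2\times I$ (which must carry the prescribed triangulation $A$) together with the two disc caps $D^2\times\{0\}$ and $D^2\times\{1\}$. I would first extend $A$ to a triangulation of this entire boundary sphere by triangulating each cap as cheaply as possible, and then cone the resulting triangulated $S^2$ to a single new interior vertex $v$. Coning a triangulated $2$-sphere that bounds a ball to an interior point produces a genuine (simplicial) triangulation of that ball, with exactly one tetrahedron $\sigma\ast v$ per boundary triangle $\sigma$: the link of $v$ is the whole boundary sphere, so $v$ is an interior point, and the induced triangulation on the lateral annulus is visibly $A$, since the only tetrahedra meeting it are the $\sigma\ast v$ with $\sigma\in A$.

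To triangulate the caps, let $t$ and $b$ be the numbers of edges of $A$ on the top and bottom boundary circles respectively. Each cap is a disc whose boundary is one of these circles, regarded as a $t$-gon (resp.\ $b$-gon); because $A$ is simplicial we have $t,b\ge 3$, so a fan triangulation from a single boundary vertex triangulates the top cap with $t-2$ triangles and the bottom cap with $b-2$ triangles, and introduces no new vertices. The cone then uses
\[
|A| + (t-2) + (b-2) = |A| + (t+b) - 4
\]
tetrahedra in total.

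The crux is to bound $t+b$ by $|A|$, which I would do by an Euler characteristic count on $A$. Write $V_{\mathrm{int}}$ for its number of interior vertices and $\beta = t+b$ for its number of boundary edges; since on each boundary circle the numbers of edges and vertices agree, $\beta$ is also the number of boundary vertices. The annulus relation $V-E+F=0$ then simplifies (the $\beta$ boundary vertices and $\beta$ boundary edges cancel) to $E_{\mathrm{int}} = V_{\mathrm{int}} + |A|$, where $E_{\mathrm{int}}$ is the number of interior edges. Combining this with the edge--face incidence identity $3|A| = 2E_{\mathrm{int}} + \beta$ gives $|A| = 2V_{\mathrm{int}} + \beta$, hence $t+b = \beta = |A| - 2V_{\mathrm{int}} \le |A|$. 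Substituting back, the construction uses at most $|A| + |A| - 4 = 2|A|-4 \le 3|A|$ tetrahedra, which proves the claim. The only genuine obstacle is this combinatorial bound on the number of boundary edges; everything else reduces to the standard fact that a $3$-ball is the cone on its triangulated boundary sphere, together with the elementary count for a fan triangulation of a polygon.
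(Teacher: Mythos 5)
Your proof is correct, and it reaches the bound by a route that is organised differently from the paper's. The paper also builds the triangulation by coning, but in two stages: it first cones $A$ itself to an interior point, producing $D^2\times I$ pinched at a point with $|A|$ tetrahedra and with the two cap discs appearing as fans $C\del A$ over the boundary circles, and then cones each of those cap discs outward to a new point to unpinch the ball. Its count is $|A|+|\del A|$ tetrahedra, and it only needs the crude observation that $|\del A|\le 2|A|$ (no triangle has all three edges on $\del A$) to land under $3|A|$. You instead cap off the lateral annulus in place with fan triangulations of the two polygonal boundary circles and then perform a single cone of the whole boundary sphere to one interior vertex; the price is that you must control the number of boundary edges $t+b$, which you do with a correct Euler-characteristic/edge-incidence computation giving $t+b=|A|-2V_{\mathrm{int}}\le |A|$ and hence a total of $2|A|-4$ tetrahedra. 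So your argument needs one extra combinatorial step that the paper avoids, but in exchange it yields a sharper constant (your bound $\beta\le|A|$ would in fact also improve the paper's own count to $2|A|$). One small point worth being explicit about: the final complex need not be simplicial (the fan diagonals and the cone can create identifications), but that is harmless here since the lemma only requires a triangulation in the paper's sense of tetrahedra with face-pairing maps, and the paper's own construction is no more simplicial than yours.
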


\begin{proof}
Consider the cone $CA$ of $A$.
This is homeomorphic to $D^2\times I$ pinched at a point in the centre (the cone point) and has $|A|$ top-dimensional cells.
We have an induced triangulation $X$ of $D^2\times \del I$ from the cone $C\del A$ over $\del A$.
Take the cone over each component of $X$, which is homeomorphic to two balls, and glue it to $CA$ by identifying the inclusions of $C\del A$ into $CX$ and $CA$.
The resulting cell complex $Y$ is homeomorphic to $D^2\times I$ and the triangulation it induces on $\del D^2\times I$ is $A$; as it was built by gluing together cones over triangulated surfaces, each 3-cell is a tetrahedron.
We note that $Y$ contains $|A|+|\del A|$ top-dimensional cells, where $|\del A|$ is the length of the boundary of $A$, and that $|\del A|$ is at most $2|A|$ since no triangle of $A$ has all three edges in the boundary.
\end{proof}

\begin{lemma}
Let $\mathcal{T}$ be a material triangulation of a compact 3-manifold $M$.
There is a pre-tetrahedral handle structure $\mathcal{H}$ for $M$, containing at most $5|\mathcal{T}|$ 0-handles, that does not contain any parallelity handles with respect to $\del M$, and such that the induced cell structure on the boundary is the same as the boundary triangulation induced by $\mathcal{T}$.
\label{lemma:inducedhandlestructurenoparallelity}
\end{lemma}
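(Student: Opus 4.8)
The plan is to start from the dual handle structure $\mathcal{H}_0$ of $\mathcal{T}$ (Definition~\ref{defn:dual_handle}), which has exactly one $0$-handle per tetrahedron and so $|\mathcal{T}|$ $0$-handles in total, and which can be arranged to induce on $\del M$ precisely the boundary triangulation of $\mathcal{T}$. Since the lemma forbids changing the boundary, I would only ever modify $\mathcal{H}_0$ in the interior of $M$. There are two defects to repair: the boundary graph of a $0$-handle dual to a tetrahedron meeting $\del M$ need not be one of the four types of Figure~\ref{fig:tetrahedralhandlestructure}, and the structure may contain parallelity handles with respect to $\del M$.

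First I would classify the $0$-handle dual to a tetrahedron $\sigma$ according to how $\sigma$ meets $\del M$. Its boundary graph is the planar dual of $\del\sigma\cong S^2$: the four face-discs are the vertices, the six edge-bands are the edges, and the complement components are the vertex-regions, where a region, a band, or a disc is relabelled as a piece of $\del M$ exactly when the corresponding vertex, edge, or face of $\sigma$ lies in $\del M$. A short check then shows that an interior tetrahedron gives the tetrahedral graph $K_4$; a tetrahedron with no boundary face but exactly one boundary edge gives a semi-tetrahedral graph (deleting one band merges the two endpoint-regions into a single $\del M$ region bordered by four $1$-cells); and a tetrahedron with a single boundary face whose opposite vertex is interior gives a product of length three. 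These three families are already pre-tetrahedral, so they require no modification.

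The remaining configurations are the obstacle, and I would resolve them by targeted interior subdivision. Two kinds of trouble arise. Combinatorially bad boundary graphs occur when $\sigma$ has two or more boundary faces, or two or more boundary edges in a bad pattern (two adjacent boundary edges already produce a graph with a degree-one vertex, which is not among the four types). Parallelity handles occur in two guises: a tetrahedron with two \emph{opposite} boundary edges produces a parallelity-of-length-four $0$-handle with both complementary regions in $\del M$, and, more subtly, an interior edge whose two endpoints both lie on $\del M$ and whose link is a complete cycle produces a parallelity $2$-handle (its two end-discs lie in $\del M$ and its sides meet only $1$-handles). The latter cannot simply be deleted, since its dual edge is an essential boundary-to-boundary arc and removing a neighbourhood would alter the homeomorphism type; instead I would subdivide such an edge at an interior midpoint, turning it into two edges each with an interior endpoint, which destroys the parallelity $2$-handle at the cost of subdividing the tetrahedra in its star. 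For the combinatorial defects I would subdivide $\sigma$ using interior vertices (e.g.\ coning to an interior point, together with the induced edge subdivisions), choosing the subdivision according to the finite list of boundary-incidence patterns so that every resulting piece has at most one boundary face, controlled boundary edges, and a boundary graph on the allowed list, with no piece sandwiched between two $\del M$ regions. By Lemma~\ref{lemma:removeparallelityhandlestructure}-type reasoning the result still satisfies Convention~\ref{convention:handlestructure}, and since all moves are interior the boundary cell structure is unchanged.

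Finally I would bound the count: interior tetrahedra contribute one $0$-handle each, and one designs the subdivision scheme so that each boundary-incident tetrahedron is replaced by at most five pieces, yielding at most $5|\mathcal{T}|$ $0$-handles. The main obstacle is exactly this last design problem: one must exhibit, for each boundary-incidence pattern, an explicit interior subdivision that is simultaneously pre-tetrahedral, free of parallelity handles (including the parallelity $2$-handles coming from boundary-to-boundary interior edges, which force one to subdivide shared edges), \emph{compatible} across the faces and edges that $\sigma$ shares with its neighbours so that the local pieces glue to a global handle structure, and economical enough to stay within the budget of five new $0$-handles per tetrahedron. Verifying all four requirements at once across the finite case list is where the real work lies.
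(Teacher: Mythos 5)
There is a genuine gap: your proof is a plan whose central construction is never carried out, and you say so yourself in the final paragraph (``where the real work lies''). Everything before that point is diagnosis --- a correct and useful classification of which boundary-incidence patterns give bad boundary graphs or parallelity handles, including the subtle parallelity 2-handles dual to interior edges with both endpoints on $\del M$ --- but the lemma is not proved until you exhibit, for every pattern, an interior subdivision that is pre-tetrahedral, parallelity-free, compatible across shared faces and edges of neighbouring tetrahedra, and within the budget of five 0-handles per tetrahedron. That last point is particularly doubtful as stated: your proposed fix for a boundary-to-boundary interior edge is to insert a midpoint, which subdivides every tetrahedron in the star of that edge, so the cost is not localised to a single tetrahedron and the ``at most five pieces each'' accounting does not obviously survive. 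Handling all handle indices (you would also need to rule out parallelity 1-handles, e.g.\ those dual to faces with two boundary edges) compounds the case analysis.

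The paper avoids all of this with one move: it attaches a collar $A\times I$ to $\del M$ (where $A$ is the induced boundary triangulation, viewed as a cell structure) before dualising. In the resulting handle structure the only handles meeting $\del M$ are dual to collar cells, and each such cell meets $\del M$ in a single connected piece; since a parallelity handle must meet $S$ in $D^2\times\del D^1$, i.e.\ in \emph{two} components, no parallelity handles can occur at any index. The interior 0-handles are tetrahedral and the collar 0-handles (dual to prisms with one triangular face on $\del M$) are semi-tetrahedral, so the structure is pre-tetrahedral with no case analysis, and the count is immediate: at most $4|\mathcal{T}|$ boundary triangles give at most $4|\mathcal{T}|$ collar cells, hence at most $5|\mathcal{T}|$ 0-handles. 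If you want to salvage your approach, the cleanest repair is to replace your per-pattern subdivision scheme with this collar insertion, which achieves uniformly what your scheme would have to verify case by case.
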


\begin{proof}
The idea here is to take the dual handle structure to $\mathcal{T}$ and rule out the possibility of any parallelity handles by ensuring that each handle has at most one component of intersection with the boundary.
Note that a (2-dimensional) cell structure $A$ on a closed surface $S$ induces a (3-dimensional) cell structure on $S\times I$, which we denote as $A\times I$, whose induced boundary cell structure on each boundary component is a copy of $A$, and where the intersection of each cell with a boundary component is connected.
We can consider our triangulation $\mathcal{T}$ to be a cell structure.
Let $A$ be the induced cell structure on $\del M$ from $\mathcal{T}$.
Let $\mathcal{C}$ be constructed from the cell structure $\mathcal{T}$ by attaching a copy of $A\times I$ to $\del\mathcal{T}$.
Let $\mathcal{H}$ be the dual handle structure to $\mathcal{C}$, as in Definition~\ref{defn:dual_handle}.
The only handles that intersect the boundary are those added by attaching $A\times I$, each of whose cells intersect each boundary component of $A\times I$ in a connected subsurface, and hence intersect $\del M$ in at most one component.
As the triangulation of $\del M$ from $\mathcal{T}$ contained at most $4|\mathcal{T}|$ faces, $\mathcal{C}$ has at most $5|\mathcal{T}|$ 3-cells, so $\mathcal{H}$ has at most $5|\mathcal{T}|$ 0-handles.

Now, $\mathcal{H}$ is pre-tetrahedral.
The interior 0-handles are dual to the interior cells of $\mathcal{C}$ which are tetrahedra, so these 0-handles have tetrahedral boundary graphs.
Those that intersect the boundary are semi-tetrahedral: they are dual to cells that are the product of a triangle and an interval, where one of the triangular faces is on the boundary.
\end{proof}

We can now prove the bound in the absence of multiplicity two singular fibres.

\begin{prop}
Let $M$ be a manifold with Seifert data $[\Sigma, (p_1, q_1), \ldots, (p_n, q_n)]$ other than the solid torus where the first $m$ singular fibres in this list are not of multiplicity two.
Then $\Delta(M) \geq \frac{1}{6}m$, and there is a universal constant $k>0$ such that
$$\Delta(M) \geq \frac{1}{k}\left(\sum_{i=1}^m \norm{q_i/p_i}-9794m\right).$$
\label{prop:no_mult_2}
\end{prop}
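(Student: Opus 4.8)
The plan is to prove the two inequalities separately, treating $\frac{1}{6}m$ as a finiteness count and the continued-fraction bound as the substantive one.

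For $\Delta(M) \geq \frac{1}{6}m$, I would mimic the spirit of Lemma~\ref{lemma:easylowerbound}, but count normal surfaces rather than homology generators: $H_1$ is insufficient here, since the torsion contributed by the exceptional fibres collapses to a single cyclic summand when the $p_i$ are pairwise coprime. Take a minimal triangulation $\mathcal{T}$, so $|\mathcal{T}| = \Delta(M)$, and recall the disjoint vertical annuli $A_1,\ldots,A_m$ separating the non-multiplicity-two singular fibres, isotoped to be simultaneously normal. These are essential (as $M$ is not a solid torus) and pairwise non-parallel, since they cut off neighbourhoods of distinct singular fibres. A quantitative Kneser--Haken finiteness bound, namely that the number of disjoint, pairwise non-parallel essential normal surfaces in a triangulation is at most $6|\mathcal{T}|$, then yields $m \leq 6\Delta(M)$.

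For the main bound, the strategy is to manufacture, for each fibre, a pre-tetrahedral handle structure of $T^2\times I$ to feed into Lemma~\ref{lemma_torus_product_bound}. Fix $\gamma_i$ and take the $(0,0)$-nicely embedded subnormal solid-torus neighbourhood $\mathcal{G}_i$ with its distinguished annulus $A_i$ from Corollary~\ref{corr:nicehandlestructure}; since $\mathcal{G}_i$ has no parallelity handles and its $0$- and $1$-handles are inherited from the dual handle structure $\mathcal{H}$ of $\mathcal{T}$, its complexity is controlled by $|\mathcal{T}|$, and the $\mathcal{G}_i$ are disjoint. Inside $\mathcal{G}_i$ lies the simplicial solid-torus neighbourhood $N_i$ of $\gamma_i$ from Theorem~\ref{thm:simplicialfibres}, carrying a simplicial meridian $\mu_i$ of length at most $48$ in $\mathcal{T}^{(82)}$. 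Drilling $N_i$ out of $\mathcal{G}_i$ produces a product region $P_i \cong T^2\times I$ whose two boundary tori carry $\mu_i$ and the fibre curve $\del A_i$ respectively; under the product structure $\mu_i$ has slope $1/0$ while $\del A_i$ has slope $q_i/p_i$, since a vertical fibre meets a meridian disc of a $(p_i,q_i)$-neighbourhood in $p_i$ points and the precise slope is read off through the Farey/continued-fraction dictionary of Section~\ref{section:continuedfractions} (the symmetry $\norm{q/p}=\norm{p/q}$ and Lemma~\ref{lemma:negative_same_continued_sum} reconcile the normalisations so that the relevant sum is exactly $\norm{q_i/p_i}$).

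I would then equip $P_i$ with a pre-tetrahedral handle structure $\mathcal{H}_i$ of complexity $O(|\mathcal{T}|)$: the handles of $\mathcal{G}_i$ away from $N_i$ supply the bulk, the simplicial boundary of $N_i$ (of size at most $|\mathcal{T}^{(82)}|$, a bounded multiple of $|\mathcal{T}|$) supplies the inner boundary, and Theorem~\ref{thm:embedsimplicial} together with Lemma~\ref{lemma:2disk_construction} let us realise $A_i$ and the collaring pieces simplicially with a controlled tetrahedron count. After performing annular simplifications until none remain (which does not disturb $(P_i,\del A_i)$ up to isotopy in $M$ and only decreases complexity), Lemma~\ref{lemma_torus_product_bound} applies with $\ell_0$, the length of $\mu_i$ in the induced cell structure $\mathcal{C}$, at most $96$, giving $\Delta(\mathcal{H}_i) \geq \frac{1}{k}(\norm{q_i/p_i} - 578 - 96\cdot 96) = \frac{1}{k}(\norm{q_i/p_i} - 9794)$; note that $\ell_1$ need never be estimated, as $\min(\ell_0,\ell_1)\leq\ell_0$ suffices. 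Since the $P_i$ are disjoint, $\sum_i \Delta(\mathcal{H}_i)$ is at most a universal constant times $|\mathcal{T}| = \Delta(M)$, so summing the $m$ inequalities and absorbing this constant into $k$ gives $\Delta(M) \geq \frac{1}{k}\left(\sum_{i=1}^m \norm{q_i/p_i} - 9794m\right)$.

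The main obstacle is assembling the pre-tetrahedral product handle structure on $P_i$ with all required features simultaneously: that each $0$-handle's boundary graph is one of the four permitted types, that $\mu_i$ and $\del A_i$ lie in the $1$-skeleton of $\mathcal{C}$ with the claimed lengths, and that the two ends genuinely realise slopes $1/0$ and $q_i/p_i$ under the product structure. This forces one to reconcile two subdivision levels ($N_i$ and $\mu_i$ live in $\mathcal{T}^{(82)}$, whereas $\mathcal{G}_i$ and $A_i$ are organised at the level of $\mathcal{H}$ and $\mathcal{T}^{(30)}$) inside a single handle structure, and to track the factor of at most two by which the combinatorial length $48$ of $\mu_i$ can grow when counted as edges crossed in $\mathcal{C}$, producing the constant $578 + 96\cdot 96 = 9794$. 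The topological control from Lemmas~\ref{lemma:nomobiuscomponents}--\ref{lemma:remove_disc_bundles}, ensuring $\mathcal{G}_i$ is genuinely a parallelity-free solid torus, is exactly what makes the drilled region an honest $T^2\times I$ and lets the complexity bookkeeping close up with a (large, unspecified) universal $k$.
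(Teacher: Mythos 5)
Your argument for the main inequality is essentially the paper's: the same pipeline of taking the $(0,0)$-nicely embedded subnormal neighbourhoods $\mathcal{G}_i$ from Corollary~\ref{corr:nicehandlestructure}, making them simplicial via Theorem~\ref{thm:embedsimplicial} and Lemma~\ref{lemma:2disk_construction}, drilling out the simplicial core neighbourhood from Theorem~\ref{thm:simplicialfibres} to produce a copy of $T^2\times I$ with a meridian of length at most $96$ on one end and a slope-$q_i/p_i$ curve on the other, converting to a pre-tetrahedral handle structure, and feeding this to Lemma~\ref{lemma_torus_product_bound}; you even recover the constant $578+96\cdot 96=9794$ correctly. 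The one procedural difference is how you satisfy the ``no annular simplifications'' hypothesis: you propose performing annular simplifications until none remain, which risks disturbing the cell structure carrying $\gamma_0$ and $\gamma_1$ in its $1$-skeleton, whereas the paper builds the handle structure via Lemma~\ref{lemma:inducedhandlestructurenoparallelity} so that it has no parallelity handles at all (and hence, by the earlier observation, admits no annular simplifications) while inducing the prescribed boundary triangulation. The paper's route is safer, but yours is repairable.

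The genuine gap is in your proof of $\Delta(M)\geq\frac{1}{6}m$. You invoke Kneser--Haken finiteness for the annuli $A_1,\ldots,A_m$, asserting they are pairwise non-parallel because they cut off neighbourhoods of distinct singular fibres. This is false in general: if $\Sigma$ is a disc and $n=m=2$, the two arcs in the base cobound a square containing no cone point, so $A_1$ and $A_2$ cobound a product region $A\times I$ with $\del A\times I\subset\del M$ and are parallel. Your count then gives $1\leq 6|\mathcal{T}|$ rather than $m\leq 6|\mathcal{T}|$. (The collapse is at most a factor of two and only in this one family, so the statement survives, but the argument as written does not establish it.) The paper avoids this entirely by counting at the level of handles rather than surfaces: each $\mathcal{G}_i$ contains at least one non-parallelity subnormal $0$-handle, and each $0$-handle of $\mathcal{H}$ --- dual to a tetrahedron --- contains at most six non-parallelity pieces when cut along the normal annuli, giving $m\leq 6|\mathcal{T}|$ directly without any non-parallelism hypothesis on the annuli.
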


\begin{proof}
Let $\mathcal{T}$ be a triangulation of $M$, and let $\mathcal{H}$ be the dual handle structure.
Consider, for each singular fibre $\gamma_i$ that is not of multiplicity two, the $(0,0)$-nicely embedded subnormal handle structure $\mathcal{G}_i$ in $M$ for the subnormal neighbourhood $(S_i, A_i)$ of $\gamma$ from Corollary~\ref{corr:nicehandlestructure}, which exists as $M$ is not a solid torus.
Let $\mathcal{C}$ be the disjoint union of these handle structures.

Each subnormal neighbourhood $\mathcal{G}_i$ contains at least one non-parallelity subnormal 0-handle in $\mathcal{H}$, and each 0-handle of $\mathcal{H}$ contains at most six such possible subnormal 0-handles.
Thus, as each 0-handle in $\mathcal{H}$ is dual to a tetrahedron in $\mathcal{T}$, $m\leq 6|\mathcal{T}|$.

As the subnormal neighbourhoods are simultaneously $(0,0)$-nicely embedded, by Theorem~\ref{thm:embedsimplicial} in $\mathcal{T}^{(30)}$ we can arrange that the following parts of $\mathcal{C}$ are simplicial: the 0-handles, the cocores of the 1-handles, and the attaching annuli of the 2-handles.
Note that we already know from Theorem~\ref{thm:simplicialfibres} that after an additions 82 barycentric subdivisions, a core curve of each of these solid tori is simplicial and that these simplicial curves have disjoint solid tori as their simplicial neighbourhoods, with a simplicial meridian curve of length 48 for each such neighbourhood.

Subdivide one more time, giving $\mathcal{T}^{(113)}$, so that the simplicial neighbourhood of each core curve is disjoint from the attaching annuli of the 2-handles of $\mathcal{C}$.
Doing this doubles the degree of each edge, so increases the length of the simplicial meridian curve to 96, and ensures that the simplicial neighbourhoods are disjoint from $\del \mathcal{G}_i$ for each $i$.
The induced triangulations of the attaching annuli in $\mathcal{T}^{(113)}$ consist of subdivided triangles from $\mathcal{T}^{(30)}$.
Each of these faces is subdivided into six triangles in each barycentric subdivision, so the attaching annuli triangulations contain a total of at most $6^{83}\times 4\times 24^{30}|\mathcal{T}|$ triangles.
By Lemma~\ref{lemma:2disk_construction} we can triangulate the 2-handles (although not necessarily as a subset of $\mathcal{T}^{(113)}$) using at most $3\times 6^{83}\times 4\times 24^{30}|\mathcal{T}|$ additional tetrahedra, so we have a total of at most $2\times 24^{113}|\mathcal{T}|$ tetrahedra.
Now, restrict to one of these triangulations of a subnormal neighbourhood $(S_i, A_i)$ of a core curve, which is a triangulation of the handle structure $\mathcal{G}_i$. 
The annulus $A_i$ is simplicial, as it was a subset of the cell structure of the boundary, and so its boundary curves are simplicial.
Take one of these.
This curve has slope $(q_i, p_i)$, the relevant Seifert data for that singular fibre for some choice of (meridian, longitude) coordinates on the boundary of the solid torus.

Drill out a simplicial neighbourhood of each of the core curves, which we recall is disjoint by construction from the attaching annuli of the 2-handles and from $\del \mathcal{G}_i = \del S_i$ and has a meridian of length at most 96.
We then have a copy of $T^2\times I$ for each singular fibre, triangulated using a total of at most $2\times 24^{113}|\mathcal{T}|$ tetrahedra, each with coordinates such that they have a $(1,0)$ curve of length at most 96 on one side and, on the other, a simplicial $(q_i, p_i)$ curve.
By Lemma~\ref{lemma:inducedhandlestructurenoparallelity} there is a pre-tetrahedral handle structure for this collection of copies of $T^2\times I$, inducing the same triangulation of the boundary, that has no parallelity handles and contains at most $10\times 24^{113}|\mathcal{T}|$ 0-handles.
Each 0-handle contributes at most $1$ to the tetrahedral complexity of the handle structure so the tetrahedral complexity of the collection is at most $10\times 24^{113}|\mathcal{T}|$.
By Lemma~\ref{lemma_torus_product_bound} we thus have
$$|\mathcal{T}| \geq \frac{1}{10\cdot 24^{113}k'}\left(\sum_{i=1}^m \norm{q_i/p_i}-9794m\right)$$
for some $k' > 0$.
\end{proof}

\begin{corr}
Let $M$ be a manifold with Seifert data $[\Sigma_g^b, (p_1, q_1), \ldots, (p_n, q_n)]$ where the first $m$ singular fibres in this list are not of multiplicity two.
Then there is a universal constant $k$ such that
$$\Delta(M) \geq \left(\frac{1}{24}-\frac{9794}{k}\right)m + \frac{1}{24}|\chi(\Sigma)| + \frac{1}{k} \sum_{i=1}^m \norm{q_i/p_i} + \frac{7}{24}.$$
\label{corr:lowerboundnomult2}
\end{corr}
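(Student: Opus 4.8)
The plan is to obtain the stated inequality as a single weighted average of lower bounds we have already established, together with the trivial bound $\Delta(M)\geq 1$. The key observation is that if $\Delta(M)$ exceeds each of several nonnegative quantities, then it exceeds any convex combination of them whose weights are nonnegative and sum to at most one (using here that $\Delta(M)\geq 0$). The whole argument therefore reduces to selecting weights that reproduce the four coefficients appearing in the target inequality.

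Concretely, write $k_0$ for the universal constant supplied by Proposition~\ref{prop:no_mult_2}. I would assemble four bounds: (A) $\Delta(M)\geq \tfrac16\big(|\chi(\Sigma)|+1\big)$ from Lemma~\ref{lemma:easylowerbound}; (B) $\Delta(M)\geq \tfrac16 m$ from Proposition~\ref{prop:no_mult_2}; (C) $\Delta(M)\geq \tfrac{1}{k_0}\big(\sum_{i=1}^m \norm{q_i/p_i}-9794m\big)$, also from Proposition~\ref{prop:no_mult_2}; and (D) $\Delta(M)\geq 1$, since every triangulation of a nonempty manifold uses at least one tetrahedron. I would then set the corollary's constant to be any $k\geq 4k_0$ and take the weights $\tfrac14$, $\tfrac14$, $\tfrac{k_0}{k}$, $\tfrac14$ on (A), (B), (C), (D) respectively.

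Carrying out the routine arithmetic, the $|\chi(\Sigma)|$-coefficient is $\tfrac14\cdot\tfrac16=\tfrac1{24}$; the $\sum\norm{q_i/p_i}$-coefficient is $\tfrac{k_0}{k}\cdot\tfrac1{k_0}=\tfrac1k$; the $m$-coefficient is $\tfrac14\cdot\tfrac16-\tfrac{k_0}{k}\cdot\tfrac{9794}{k_0}=\tfrac1{24}-\tfrac{9794}{k}$; and the constant is $\tfrac14\cdot\tfrac16+\tfrac14=\tfrac1{24}+\tfrac6{24}=\tfrac7{24}$, matching the statement exactly.

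The only point that needs checking---and the closest thing here to an obstacle---is that these weights form a legitimate convex combination: they are manifestly nonnegative, and they sum to $\tfrac34+\tfrac{k_0}{k}$, which is at most $1$ precisely because $k\geq 4k_0$. This is exactly why the corollary's constant must be an enlargement of the constant from Proposition~\ref{prop:no_mult_2} rather than the same constant, as using weight one on bound (C) would leave no room for the other three contributions. With this choice the weighted-average inequality is immediate, and no case analysis (for instance on whether $m=0$, or on the sign of $\chi(\Sigma)$) is required, since each of (A)--(D) holds unconditionally for every such $M$ other than the solid torus.
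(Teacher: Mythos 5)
Your proposal is correct and takes essentially the same approach as the paper: the paper also combines the same four lower bounds (Lemma~\ref{lemma:easylowerbound}, the two bounds of Proposition~\ref{prop:no_mult_2}, and the trivial bound $\Delta(M)\geq 1$) by averaging, taking the plain average with weight $\tfrac14$ each and absorbing the resulting factor of $4$ into the universal constant $k$. Your weighted version with weight $\tfrac{k_0}{k}$ on the continued-fraction bound is just a slightly more explicit bookkeeping of the same argument.
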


\begin{proof}
Let $\mathcal{T}$ be a triangulation of $M$.
From Lemma~\ref{lemma:easylowerbound} and Proposition~\ref{prop:no_mult_2} we have four different lower bounds for $|\mathcal{T}|$ including the trivial one: that it is at least one.
The maximum of these is certainly at least the average of the four, so as we can set $k$ to also be at least one,
\begin{align*}
    |\mathcal{T}| &\geq \frac{1}{4}\left(\frac{m}{6} + \frac{1}{6}(|\chi(\Sigma)| + 1) + \frac{1}{k}\sum_{i=1}^m \norm{q_i/p_i} - \frac{9794}{k}m + 1\right)
\end{align*}
which gives the bound.
\end{proof}

\begin{proof}[Proof of Theorem~\ref{thm:complexitybound}]
The following upper bound is given in Proposition~\ref{prop:upper_bound}: 
$$\Delta(M)\leq 96|\chi(\Sigma)| + 176 + 70\sum_{i=1}^n \norm{q_i/p_i}.$$

The lower bound when there are no singular fibres of multiplicity two follows from Proposition~\ref{prop:no_mult_2}.
To extend our bound to multiplicity two singular fibres, we take a double cover to remove all but at most one of them.

Consider the degree two cover of $M$ constructed as follows: view $M$ as a circle bundle over the orbifold $S$ that is $\Sigma$ with $n$ cone points.
The multiplicity two singular fibres are the preimages of degree two cone points.
Pair these cone points (with at most one left over) by picking some disjoint family of arcs that begin and end on degree two points.
Let $s$ be the number of arcs.
There is a homomorphism $\pi_1(S)\to \ZZ/2$ by taking a loop to its intersection count with this family of arcs mod 2.
The degree two cover $\widetilde{S}\to S$ associated to the index 2 kernel of this homomorphism induces a horizontal degree two cover $\widetilde{M} \to M$ on the level of 3-manifolds.
(Equivalently, we can consider the collection of Klein bottles in $M$ that is the preimage of the arcs in $S$ and take the corresponding degree two cover.)

Which Seifert fibred manifold is $\widetilde{M}$?
Using the Riemann-Hurwitz formula, the base surface $\widetilde{S}$ of $\widetilde{M}$ has Euler characteristic $2\chi(S) -\sum_{i=1}^{2s} 1 = 4-2a-2b-2s$, where $a$ is the nonorientable genus or twice the orientable genus, according to whether $S$ is nonorientable or not, and $b$ is the number of boundary components of $S$.
It has $2b$ boundary components, two singular fibres of index $\pm q_i/p_i$ for each non-multiplicity-two singular fibre of $\widetilde{M}$, as well as, if there was a multiplicity two singular fibre left over, two multiplicity two singular fibres.
A $-q_i/p_i$ singular fibre, which is a $(p_i-q_i)/p_i$ singular fibre after a change of coordinates, contributes the same continued fraction sum by Lemma~\ref{lemma:negative_same_continued_sum}.
We deduce that if $\widetilde{S}$ is orientable, it has genus $a + s-1$, or if it is nonorientable, it has $2a + 2s-2$ crosscaps.
Sort the list of singular fibres of $M$ was so that those with multiplicity two are at the end, and let $m$ be the number of non-multiplicity-two singular fibres.

When $a + b - 2 \geq 0$, $|s+a+b-2| = |s| + |a+b-2|$.
Otherwise, in which case $\Sigma$ is a disc, $|s + a + b-2| = |s|-1$ so either way $|s+a+b-2| +2 \geq |s| + |a+b-2|$.
We can take $k$ to be large enough that $\frac{1}{24}-\frac{9794}{k}$ is at least $\frac{1}{48}$.
Using these two bounds, we can modify the lower bound on $\Delta(\widetilde{M})$ from Corollary~\ref{corr:lowerboundnomult2} in the following way:
\begin{align*}
\Delta(\widetilde{M}) &\geq \left(\frac{1}{24}-\frac{9794}{k}\right)2m + \frac{1}{24}|4-2a-2b-2s| + \frac{1}{24}\cdot 4 + \frac{1}{k} \sum_{i=1}^m2\norm{q_i/p_i} + \frac{3}{24}\\
&\geq 2\cdot\frac{1}{48}m + \frac{1}{12}(|s| + |a + b-2|) + \frac{2}{k}\sum_{i=1}^m\norm{q_i/p_i} + \frac{3}{24}\\
&\geq \frac{1}{24}m + \frac{1}{12}|\chi(S)| + \frac{1}{12}s + \frac{2}{k}\sum_{i=1}^m\norm{q_i/p_i} + \frac{3}{24}
\end{align*}
but then each multiplicity two fibre contributes 2 to the sum of $\norm{q_i/p_i}$ over all the singular fibres, so $\sum_{i=1}^n \norm{q_i/p_i} \leq \sum_{i=1}^m \norm{q_i/p_i} + 4s + 2$.
By taking $k$ to be at least 96, we have that $\frac{4}{k}$ is at most $\frac{1}{24}$ and $\frac{8}{k}s$ is at most $\frac{1}{12}s$.
We can thus obtain the bound that
$$\Delta(\widetilde{M}) \geq \frac{1}{24}m + \frac{1}{12}|\chi(S)| + \frac{2}{k}\sum_{i=1}^n\norm{q_i/p_i} + \frac{1}{12}.$$
By lifting a minimal triangulation of $M$ through the double cover, we see that $\Delta(\widetilde{M}) \leq 2\Delta(M)$.
Thus we have the result for a suitable choice of $k$.
\end{proof}

\section{Further questions}
There are several lingering questions from this work.

We showed in Theorem~\ref{thm:simplicialfibres} that the non-multiplicity two singular fibres are always simplicial in the $79^{th}$ barycentric subdivision of a triangulation of a Seifert fibered space with non-empty boundary.
Is this true for the multiplicity two ones also? The barrier in our approach to this case was that one cannot rule out parallelity bundle components that are thickened M\"obius bands.
Similarly, is this true for closed Seifert fibered spaces as well?
We made great use of the fact that the annuli cutting off the singular fibres are essential.
In the closed case, one must handle not only that the tori cutting out singular fibre neighbourhoods are trivial, but also that in some small Seifert fibered spaces there are no incompressible orientable surfaces at all.

Can one extend the bounds on triangulation complexity in Theorem~\ref{thm:complexitybound} to closed Seifert fibered spaces as well?
Let $M$ be such a manifold with Seifert data $[\Sigma, e, (p_1,q_1),\ldots,(p_n,q_n)]$, where $e$ is the Euler number and $\Sigma$ is a closed surface of genus $g$.
If $M$ has an essential vertical torus then we have a vertex normal essential torus by Corollary 6.8 of~\cite{JacoTollefson}, so outside of a few cases in which there is a horizontal torus we find that there must be a fundamental essential vertical torus.
Cutting along this torus, we get a handle structure for the resulting manifold whose size is bounded above and below by a linear function of the size of the original triangulation~\cite[Theorem 9.3]{LackenbyCertificationKnottedness}, and so we can see that $\Delta(M)$ is at least $k'(g+\sum_{i=1}^n \norm{q_i/p_i})$ for some $k'$.
But this lacks any reliance on Euler number, so cannot be an upper bound up to multiplicative constants.
We know by a similar proof to Proposition~\ref{prop:upper_bound} that $\Delta(M) \leq k(g+b+|e|+\sum_{i=1}^n \norm{q_i/p_i})$ for any closed Seifert fibered space.
We conjecture that (up to a multiplicative constant) this is a lower bound as well.

Can one make the constants in Theorem~\ref{thm:complexitybound} explicit? The sole impediment in this method of proof is that the constant in Lemma~\ref{lemma_torus_product_bound} relies on the nonconstructive constant in Theorem 6.1 of Masur, Mosher and Schleimer's work~\cite{MMS}.

One motivation for considering the triangulation complexity of Seifert fibered spaces is that the lower bound by the simplicial volume of the manifold tells us nothing, for their simplicial volumes are zero.
The other class of irreducible 3-manifolds with zero Gromov norm is the graph manifolds.
It remains to determine their triangulation complexities (up to multiplicative constants) in terms of their topology.
Let $\mathcal{T}$ be a triangulation of such a manifold $M$.
Let $\{T_i\}$ be a maximal collection of disjoint normal tori giving the JSJ decomposition, so cutting $M$ into a collection of Seifert fibered spaces $\{M_j\}$.
As these tori are essential, the generalised parallelity bundle of the collection $(\{M_j\}, \{T_i\})$ is a collection of annulus, M\"obius band and disc bundles, and using the algorithm of Agol, Hass and Thurston~\cite{AgolHassThurston}, as applied by Lackenby~\cite[Theorem 9.3]{LackenbyCertificationKnottedness}, we can compute a triangulation of $(\{M_j\})$ whose size is linear in the size of $\mathcal{T}$.
This strategy tells us that $\Delta(M) \geq k\sum_j \Delta(M_j)$ for some $k$.
However, this cannot be optimal as there are in general an infinite number of graph manifolds built from a given set of Seifert fibered blocks; in this computation, much as in the closed Seifert fibered space situation, we entirely lose the information in the gluing maps.

\bibliographystyle{abbrv}
\bibliography{bibliography}

\end{document}